\newcounter{theorem}
\renewcommand{\thetheorem}{\arabic{section}.\arabic{theorem}}
\newenvironment{thm}[1]{\par\addvspace{0.5cm}
    \begin{sloppypar}\refstepcounter{theorem}%
    {\bf #1 \thetheorem.}\it{}}{\end{sloppypar}}
\newcommand{\eh}{\hfill}\newlength{\sperr}
\newenvironment{theorem}{\begin{thm}{Theorem}} {\end{thm}}
\newenvironment{corollary}{\begin{thm}{Corollary}} {\end{thm}}
\newenvironment{defi}[1]{\par\addvspace{0.5cm}
\begin{sloppypar}\refstepcounter{theorem}%
{\bf #1 \thetheorem.}\rm{}}{\end{sloppypar}}
\newenvironment{definition}{\begin{defi}{Definition}}{\end{defi}}
\newenvironment{remark}{\begin{defi}{Remark}}{\end{defi}}
\newenvironment{proof}{{\settowidth{\sperr}{\rm Proof}
\par\addvspace{0.3cm}\parbox[t]{1.3\sperr}{\rm P\eh r\eh o\eh o\eh f\eh. }%
}}{\nopagebreak\mbox{}\hfill $\Box$\par\addvspace{0.25cm}}
\newcommand{\essi}{\operatornamewithlimits{ess\,inf}}
\newcommand{\esss}{\operatornamewithlimits{ess\,sup}}
\newcommand{\al}{\alpha}
\newcommand{\bt}{\beta}
\newcommand{\dl}{\delta}
\newcommand{\Om}{\Omega}
\newcommand{\lb}{\lambda}
\newcommand{\ve}{\varepsilon}
\newcommand{\gm}{\gamma}
\newcommand{\Gm}{\Gamma}
\newcommand{\sg}{\sigma}
\newcommand{\vi}{\varphi}
\newcommand{\rn}{\mathbb{R}^n}
\newcommand{\rone}{\mathbb{R}^1}
\newcommand{\intl}{\int\limits}
\newcommand{\diam}{\mbox{\,\rm diam\,}}
\begin{document}

\centerline{\textbf{Operators of Harmonic Analysis }}

\centerline{\textbf{ in Weighted  Spaces with Non-standard Growth}}

 \vspace{4mm}

\centerline{by}

 \centerline{\textbf{V.M. Kokilashvili,}}

 \centerline{\textit{A.Razmadze Mathematical Institute and Black
Sea University, Tbilisi, Georgia}}

 \centerline{\textit{kokil@rmi.acnet.ge}}

\vspace{5mm}
 \centerline{and }

 \vspace{5mm}
\centerline{\textbf{S.G.Samko}}

\centerline{\textit{Universidade do Algarve, Portugal}}

\centerline{\textit{ssamko@ualg.pt}}

 \vspace{5mm}
\centerline{\textbf{Abstracts}}

 \vspace{3mm}

 \footnotesize

Last years there was increasing an interest to the so called function spaces with non-standard growth, known also
as variable exponent Lebesgue spaces. For weighted such spaces on homogeneous spaces, we develop a certain
variant of Rubio de Francia's extrapolation theorem. This extrapolation theorem is applied to obtain the
boundedness in such spaces of various operators of harmonic analysis, such as
maximal and singular operators,
potential operators, Fourier multipliers, dominants of partial sums of trigonometric Fourier series and others,
in weighted Lebesgue spaces with variable exponent. There are also given their vector-valued analogues.

 \vspace{3mm}
\normalsize

\section{Introduction}\label{1}

\setcounter{equation}{0}

During  last years a significant progress was made in the study of maximal  and singular operators
and potential type operators in the generalized Lebesgue spaces $L^{p(\cdot)}$ with variable
exponent, known also as the spaces with non-standard growth. A number of mathematical problems
leading to such spaces with variable exponent arise in applications to partial differential
equations, variational problems and continuum mechanics (in particular, in the theory of the so
called electrorheological fluids), see E. Acerbi and G.Mingione \cite{9b},\cite{9d}, X.Fan and
D.Zhao \cite{160zb}, M.Ru$\check{z}$i$\check{c}$ka \cite{525}, V.V. Zhikov \cite{730ab},
\cite{730c}.
 These applications stipulated a significant interest to such spaces in the last decade.

  The most advance in the study of the classical operators of harmonic analysis in the case of variable
  exponent was made in the
Euclidean setting, including weighted estimates.
 We refer in particular to the surveying articles  L.Diening, P.H{\"a}st{\"o} and A.Nekvinda
\cite{106b},  V.Kokilashvili \cite{316b}, S.Samko \cite{580bd} and
papers D.Cruz-Uribe, A.Fiorenza, J.M.Martell and C.Perez
\cite{101zb}, D.Cruz-Uribe, A.Fiorenza and C.J.Neugebauer
 \cite{101ab}, L. Diening \cite{106}, \cite{105a}, \cite{106z}, L.Diening and M.Ru$\check{z}$i$\check{c}$ka \cite{107a},
V. Kokilashvili, N.Samko and S.Samko \cite{317c}, V.Kokilashvili and S.Samko \cite{321j},  \cite{321c},
\cite{321a}, \cite{321i}, A.Nekvinda  \cite{414b}, S.Samko
 \cite{579}, \cite{580b},  \cite{580bc},  S.Samko, E.Shargorodsky and B.Vakulov \cite{584a} and references therein.

Recently there also started the investigation of these classical
operators in the spaces with variable exponent in the setting of
metric measure spaces,  the case of constant $p$ in this setting
having a long history, we refer, in particular to the papers
A.P.Calder{\'o}n \cite{72b}, R.R.Coifman and G.Weiss \cite{97},
\cite{97a},  R.Mac\'{\i}as and C.Segovia \cite{381a}, books
D.E.Edmunds and V.Kokilashvili and A.Meskhi \cite{145a} and
I.Genebashvili, A.Gogatishvili, V.Kokilashvili and M.Krbec
\cite{187}, J.Heinonen \cite{225a} and references therein. The
non-weighted boundedness of the maximal operator on homogeneous
spaces was proved by P.Harjulehto, P.H{\"a}st{\"o} and M.Pere
\cite{224b} and Sobolev embedding theorem with variable exponents
on homogeneous spaces with variable dimension was proved in
P.Harjulehto, P.H{\"a}st{\"o} and V.Latvala \cite{224ab}.

In the present paper we give  a development of weighted
estimations  of various operators of harmonic analysis in Lebesgue
spaces with variable exponent $p(x)$. We first give theorems on
the weighted boundedness of the maximal operator on homogeneous
spaces (Theorems \ref{th3.1.} and \ref{th3.2.}). Next, in Section
\ref{subs4.} we give a certain $p(\cdot)\to q(\cdot)$-version of
Rubio de Francia's extrapolation theorem \cite{522b} within the
frameworks of weighted spaces $L_\varrho^{p(\cdot)}$ on metric
measure spaces. Proving this version we develop some ideas and
approaches of papers \cite{101zb}, \cite{101ac}.

By means of this extrapolation theorem and known theorems on the
boundedness with Muckenhoupt weights in the case of constant $p$,
we obtain results on  weighted $p(\cdot)\to q(\cdot)$- or $p(\cdot)\to p(\cdot)$-boundedness - in the case of variable exponent $p(x)$ -  of the following operators \\
1) potential type operators, \\
2)  Fourier multipliers (weighted Mikhlin, H\"ormander and Lizorkin-type theorems, Subsection \ref{subs5.1.}),\\
3) multipliers of trigonometric Fourier series (Subsection
\ref{subs5.2.}),\\
3) majorants of partial sums of Fourier series (Subsection
\ref{subs5.3.}),\\
4) singular integral operators on Carleson curves and in Euclidean setting (Subsections \ref{subs5.4.}-\ref{subs5.8.}),\\
5) Feffermann-Stein function (Subsection \ref{subs5.7.}),\\
6) some vector-valued operators (Subsection \ref{subs5.9.}).

 \section{Definitions and preliminaries}\label{sec1}

\setcounter{equation}{0} \setcounter{theorem}{0}

\subsection{On variable dimensions in metric measure
spaces}\label{subskiki}

In the sequel, $(X,d,\mu)$ denotes a metric space  with the
(quasi)metric
 $d$  and non-negative measure  $\mu$. We refer  to  \cite{145a}, \cite{187}, \cite{225a} for the basics on metric measure spaces. By $B(x,r)=\{y\in X: d(x,y)<r\}$ we denote a
ball in $X$. The following standard conditions will be assumed to
be satisfied: \\
1) all the balls $B(x,r)=\{y\in X: d(x,y)<r\}$ are measurable,
\\
2) the space $C(X)$ of uniformly continuous functions on $X$ is
dense in $L^1(\mu)$.

In most of the statements we also suppose that \\
 3) the measure
$\mu$ satisfies the doubling condition:
$$\mu B(x,2r)\le C \mu B(x,r),$$
where $C>0$ does not depend on  $r>0$ and $x\in X.$ A measure
satisfying this condition will be called doubling measure.

For a locally $\mu$-integrable function  $f: X \to \mathbb{R}^1$ we consider the Hardy-Littlewood  maximal
function
$$
\mathcal{M} f(x)=\sup_{r>0} \frac{1}{\mu (B(x,r))}
\int\limits_{B(x,r)} |f(y)| \,d\mu(y).
$$
By  $A_s=A_s(X)$, where $1\le s <\infty$,  we denote the class of
weights  (locally almost everywhere positive $\mu$-integrable
functions)  $w: X\to \mathbb{R}^1$ which satisfy  the Muckenhoupt
condition
$$\sup\limits_{B}\left(\frac{1}{\mu
B}\intl_{B}w(y)d\mu(y)\right)\left(\frac{1}{\mu B}\intl_{B}
w^{-\frac{1}{s-1}}(y)d\mu(y)\right)^{s-1} <\infty$$ in the case
$1<s<\infty$, and the condition
$$\mathcal{M}w(x)\le Cw(x) $$
for almost all  $x\in X$, with a constant  $C>0$, not depending on
$x\in X$, in the case  $s=1$. Obviously,  $A_1\subset A_s, \
1<s<\infty.$

As is known, see   \cite{72b}, \cite{381a},  the weighted boundedness
$$\intl\limits_{X}(\mathcal{M}f(x))^s w(x) d\mu(x) \le C \intl\limits_{X}|f(x)|^s w(x) d\mu(x),$$
holds, if and only if  $w\in A_s$.

Let  $\Om$ be an open set in  $X$.

\begin{definition}\label{def1.1chch}
By  $\mathcal{P}(\Om)$ we denote the class of $\mu$-measurable
functions on $\Om$, such that
\begin{equation}\label{1.1}
1<p_-\le p_+<\infty,
\end{equation}
where  $ p_-=p_-(\Om)=\essi\limits_{x\in\Om} p(x) \quad \textrm{and} \quad p_+=p_+(\Om)=\esss\limits_{x\in\Om}
p(x). $
\end{definition}

\begin{definition}\label{def1.1ch}
By $L_\varrho^{p(\cdot)}(\Om)$ we denote the weighted Banach
function space  of $\mu$-measurable functions  $f: \Om\to
\mathbb{R}_1^+$, such that
\begin{equation}\label{1.2}
\|f\|_{L^{p(\cdot)}_\varrho}:=\|\varrho
f\|_{p(\cdot)}=\inf\left\{\lb>0: \intl_\Om
\left|\frac{\varrho(x)f(x)}{\lb}\right|^{p(x)} \;d\mu(x)\le
1\right\}<\infty .
\end{equation}
\end{definition}

\vspace{4mm} \begin{definition}\label{def1.1} \textit{We say that
a weight  $\varrho$ belongs to the class
$\mathfrak{A}_{p(\cdot)}(\Om)$, if the maximal operator $\mathcal{M}$
is bounded in the space $L_\varrho^{p(\cdot)}(\Om).$}
\end{definition}
\begin{definition}\label{def1.1}
\textit{A function $p:\Om \to \mathbb{R}^1$ is said to belong to
the class $WL(\Om)$ (weak Lipshitz), if}
\begin{equation}\label{1.3}
|p(x)-p(y)|\leq \frac{A}{\ln\frac{1}{d(x,y)}}\,, \;\; d(x,y)\leq
\frac{1}{2}, \;\; x,\,y\in \Om,
\end{equation}
\textit{where $A>0$ does not depend on  $x$ and $y$.}
\end{definition}

\vspace{3mm}  The notion of lower and upper local dimension  of
$X$ at a point $x$ introduced as
$$\underline{dim}\,X(x)= \lim\limits_{\overline{r\to 0}} \frac{\ln \mu B(x,r)}{\ln r}, \quad
\overline{dim}\, X(x)= \overline{\lim\limits_{r\to 0}}\, \frac{\ln \mu B(x,r)}{\ln r}$$ are known,
see e.g. \cite{160zzzz}. We will use different notions of local lower and upper dimensions,
inspired by the notion of the so called index numbers $m(w), M(w)$ of almost monotonic functions
$w$, see their definition in (\ref{mÌ}). These indices  studied in \cite{539}, \cite{539d},
\cite{539e}, are versions of Matuzewska-Orlicz index numbers used in the theory of Orlicz spaces,
see \cite{382a}, \cite{382b}. The idea to introduce local dimensions in terms of these indices by
the following definition was borrowed from the papers \cite{539j}, \cite{539jnew}.

\begin{definition}\label{defN}
 The numbers
\begin{equation}\label{ibas}
\underline{\mathfrak{dim}}(X;x) =\sup_{r>1}\frac{\ln   \
\left(\lim\limits_{\overline{h\to 0}} \frac{\mu B(x,rh)}{\mu
B(x,h)} \right)}{\ln   \ r}\  , \quad
\overline{\mathfrak{dim}}(X;x) =\inf_{r>1}\frac{\ln \
\left(\overline{\lim\limits_{h\to 0}}\frac{\mu B(x,rh)}{\mu
B(x,h)}
  \right)}{\ln   \ r}
\end{equation}
will be referred to as local lower and upper dimensions.
\end{definition}
  Observe
that the "dimension" $\underline{\mathfrak{dim}}(X;x)$ may be also
rewritten in terms of the upper limit as well:
\begin{equation} \label{ggbvcdshnew}
 \underline{\mathfrak{dim}}(X;x) =\sup_{0<r<1}\frac{\ln   \ \left(\overline{\lim\limits_{h\to 0}}
\frac{\mu B(x,rh)}{\mu B (x,h)} \right)}{\ln   \ r}.
\end{equation}
Since the function
\begin{equation}\label{dsgh}
\mu_0(x,r) = \overline{\lim\limits_{h\to 0}} \frac{\mu
B(x,rh)}{\mu B(x,h)}
\end{equation}
is semimultiplicative in $r$, that is, $\mu_0(x,r_1r_2)\le
\mu_0(x,r_1)\mu_0(x,r_2)$, by  properties  of such functions
(\cite{342}, p. 75; \cite{342a}) we obtain that
$\underline{\mathfrak{dim}}(X;x)\le
\overline{\mathfrak{dim}}(X;x)$ and we may  rewrite the dimensions
$\underline{\mathfrak{dim}}(X;x)$ and
$\overline{\mathfrak{dim}}(X;x)$ also in the form
\begin{equation}\label{ksajlkJ}
\underline{\mathfrak{dim}}(X;x) = \lim\limits_{r\to
 0}\frac{\ln   \mu_0(x,r)}{\ln   \ r}, \quad \overline{\mathfrak{dim}}(X;x) = \lim\limits_{r\to
\infty}\frac{\ln   \mu_0(x,r)}{\ln   \ r}.
\end{equation}

\begin{remark}\label{rteyriu}
Introduction of dimensions $\underline{\mathfrak{dim}}(X;x)$ and
$\overline{\mathfrak{dim}}(X;x)$ just in form
(\ref{ggbvcdshnew})-(\ref{ksajlkJ})  is caused by the fact that
they arise naturally when dealing with Muckenhoupt condition for
radial type weights on metric measure spaces. They seem may not
coincide with dimensions $\underline{dim}\,X(x), \overline{dim}\,
X(x)$. There is an impression that probably for different goals
different notions of dimensions may be useful.
\end{remark}

 \vspace{2mm}

We will mainly need the lower bound for lower dimensions
$\underline{\mathfrak{dim}}(X;x)$ on an open set $\Om\subseteq X$:
$$\underline{\mathfrak{dim}}(\Om):=\essi\limits_{x\in X}\underline{\mathfrak{dim}}(\Om;x).$$

In case where $\Om$ is unbounded, we will also need similar
dimensions connected in a sense with the influence of  infinity.
Let
\begin{equation}\label{dsghbuyt}
\mu_\infty(x,r) = \overline{\lim\limits_{h\to \infty}} \frac{\mu
B(x,rh)}{\mu B(x,h)}.
\end{equation}
We introduce the numbers
\begin{equation}\label{ksajlkJmnxt}
\underline{\mathfrak{dim}}_\infty(X;x) = \lim\limits_{r\to
 0}\frac{\ln   \mu_\infty(x,r)}{\ln   \ r}, \quad \overline{\mathfrak{dim}}_\infty(X;x) =
  \lim\limits_{r\to \infty}\frac{\ln   \mu_\infty(x,r)}{\ln   \ r}
\end{equation}
and their bounds
\begin{equation}\label{i}
\underline{\mathfrak{dim}}_\infty(\Om)= \essi\limits_{x\in \Om} \underline{\mathfrak{dim}}_\infty (X;x), \quad
\overline{\mathfrak{dim}}_\infty(\Om)= \esss\limits_{x\in \Om} \overline{\mathfrak{dim}}_\infty(X;x).
\end{equation}

It is not hard to see that  $ \underline{\mathfrak{dim}}(\Om),
\underline{\mathfrak{dim}}_\infty(\Om),$ and $
\overline{\mathfrak{dim}}_\infty(\Om)$ are non-negative. In the
sequel, when considering these bounds of dimensions we always
assume that $\underline{\mathfrak{dim}}(\Om),  $ $
\underline{\mathfrak{dim}}_\infty(\Om), \
\overline{\mathfrak{dim}}_\infty (\Om) \in (0,\infty)$.

\vspace{5mm} \subsection{Classes of the weight
functions}\label{subsec2.}

We consider, in particular, the weights
\begin{equation}\label{2.1}
\varrho(x)= [1+d(x_0,x)]^{\bt_\infty}\prod\limits_{k=1}^N
[d(x,x_k)]^{\bt_k}, \ \ \  x_k\in X, k=0,1,...N,
\end{equation}
where  $\bt_\infty=0$ â in the case where  $X$ is bounded. Let
$\Pi=\{x_0,x_1,..., x_N\}$ be a  given finite set of points in
$X$. We take $d(x,y)=|x-y|$ in all the cases where
$X=\mathbb{R}^n$.

\vspace{4mm} \begin{definition}\label{deff2.1.} \textit{A weight
function of form (\ref{2.1}) is said to belong to the class
$V_{p(\cdot)}(\Om,\Pi)$, where $p(\cdot)\in C(\Om)$, if}
\begin{equation}\label{2.2}
-\frac{\underline{\mathfrak{dim}}(\Om)}{p(x_k)}<\bt_k<\frac{\underline{\mathfrak{dim}}(\Om)}{p^\prime(x_k)}
\end{equation}
 \textit{and, in the case $\Om$ is infinite,}
\begin{equation}\label{2.3}
  -\frac{\underline{\mathfrak{dim}}_\infty(\Om)}{p_\infty}<\bt_\infty+
  \sum\limits_{k=1}^N\bt_k <\underline{\mathfrak{dim}}_\infty(\Om)
  -\frac{\overline{\mathfrak{dim}}_\infty(\Om)}{p_\infty}.
\end{equation}
\end{definition}

Note that when the metric space  $X$ has a constant dimension $s$
in the sense that
$$c_1 r^s\le \mu B(x,r)\le c_2 r^s$$
with the constants  $c_1>0$ è $c_2>0$, not depending on
 $x\in X$ and $r>0$, the inequalities in   (\ref{2.2}), (\ref{2.3})
 and (\ref{2.6}) turn into
\begin{equation}\label{2.2prime}
-\frac{s}{p(x_k)}<\bt_k<\frac{s}{p^\prime(x_k)},
 \quad  -\frac{s}{p_\infty}<\bt_\infty+\sum\limits_{k=1}^N\bt_k
<\frac{s}{p^\prime_\infty}
\end{equation}
and
\begin{equation}\label{2.6prime}
 -\frac{s}{p(x_k)}
< m(w)\le M(w) < \frac{s}{p^\prime(x_k)}\, \ , \ \ k=1,2,...,N,
\end{equation}
respectively.

In fact, we may admit  a more general class of weights
\begin{equation}\label{2.4}
\varrho(x)=w_0[1+d(x_0,x)]\prod_{k=1}^N w_k[d(x,x_k)]
\end{equation}
with  "radial"  weights, where  the functions $w_0$ and $w_k,
k=1,...,N,$ belong to a class of Zygmund-Bary-Stechkin type, which
admits an oscillation between two power functions with different
exponents.

  \vspace{4mm}
By  $U=U([0,\ell])$ we denote the class of functions $u\in C([0,\ell]), \ 0<\ell\le \infty,$ such
that  $ u(0)=0, u(t)>0$ for $t>0$ and  $u$ is an almost increasing function on $[0,\ell]$. (We
recall that a function $u$ is called \textit{almost increasing} on $[0,\ell]$, if there exists a
constant $C (\ge 1)$ such that $u(t_1)\le u(t_2)$ for all $0\le t_1\le t_2 \le \ell$). By
$\widetilde{U}$ we denote the class of function $u$, such that
 $t^au(t)\in U$ for some  $a\in\mathbb{R}^1$.

  \vspace{4mm}
  \begin{definition}\label{def2.2.} \ (\cite{46}) \ \textit{A function $v$
is said to belong to the Zygmund-Bary-Stechkin class $\Phi^0_\dl$, if
$$
 \int_0^h\frac{v(t)}{t}dt
\le cv(h) \ \ \textit{and} \ \
 \int_h^\ell\frac{v(t)}{t^{1+\dl}}dt \le c\frac{v(h)}{h^\dl},
$$
where  $c=c(v)>0$ does not depend on   $h\in (0,\ell]$.}
\end{definition}

It is known that $v\in \Phi^0_\dl$, if and only if $0<m(v)\le
M(v)<\dl$, where
\begin{equation}\label{mÌ}
m(w)=\sup_{t>1}\frac{\ln\left(\lim\limits_{\overline{h\to 0}} \frac{w(ht)}{w(h)}\right)}{\ln t} \ \ \ \
\textrm{and} \ \ \ \ M(w)=\sup_{t>1}\frac{\ln\left(\overline{\lim\limits_{h\to 0}} \frac{w(ht)}{w(h)}\right)}{\ln
t}
\end{equation}
 (see \cite{539}, \cite{539d}, \cite{270a}).

For functions $ w$ defined in the neighborhood of infinity and
such that $w\left(\frac{1}{r}\right)\in \widetilde{U}([0, \dl]) $
 for some $\dl >0$,
 we introduce also
\begin{equation}\label{msusutxe}
 m_\infty(w) =\sup_{x>1}\frac{\ln \
\left[\underline{\lim}_{h\to \infty} \frac{w(xh)}{w(h)}
\right]}{\ln \ x}\ , \ \  M_\infty(w) =\inf_{x>1}\frac{\ln \
\left[\overline{\lim}_{h\to \infty} \frac{w(xh)}{w(h)}
\right]}{\ln \ x}.
\end{equation}

Generalizing Definition \ref{deff2.1.}, we introduce also the
following notion.
  \begin{definition}\label{def2.3.}
\textit{A weight function $\varrho$ of form (\ref{2.4}) is said to belong to the class
$V^{osc}_{p(\cdot)}(\Om,\Pi)$, where $p(\cdot)\in C(\Om)$, if
\begin{equation}\label{2.6}
w_k(r)\in \widetilde{U}([0,\ell]), \ell=\diam \Om \quad
\textrm{and}\ \  -\frac{\underline{\mathfrak{dim}}(\Om)}{p(x_k)} <
m(w_k)\le M(w_k) <
\frac{\underline{\mathfrak{dim}}(\Om)}{p^\prime(x_k)} ,
\end{equation}
$ k=1,2,...,N,$
and (in the case $\Om$ is infinite)
$$w_0\left(\frac{\ell^2}{r}\right)\in \widetilde{U}([0,\ell]) $$
and
\begin{equation}\label{f27dco}
 -\frac{\underline{\mathfrak{dim}}_\infty(\Om)}{p_\infty}<\sum\limits_{k=0}^N m_\infty(w_k)\le
\sum\limits_{k=0}^N M_\infty(w_k)
<\frac{\underline{\mathfrak{dim}}_\infty(\Om)}{p^\prime_\infty}-\Delta_{p_\infty},
\end{equation}
where
$\Delta_{p_\infty}=\frac{\overline{\mathfrak{dim}}_\infty(\Om)-\underline{\mathfrak{dim}}_\infty(\Om)}{p_\infty}.$}
\end{definition}

Observe that in the case $\Om=X=\mathbb{R}^n$ conditions
(\ref{2.6}) and (\ref{f27dco}) take the form
\begin{equation}\label{2.6bvc}
w_k(r)\in \widetilde{U}(\mathbf{R}^1_+): =\left\{w: \ w\left(r\right), w\left(\frac{1}{r}\right)\in
\widetilde{U}([0,1])\right\}
\end{equation}
 and
\begin{equation}\label{f27dcobvc}
 -\frac{n}{p(x_k)} < m(w_k)\le M(w_k) < \frac{n}{p^\prime(x_k)}
,\quad  -\frac{n}{p_\infty}<\sum\limits_{k=0}^N m_\infty(w_k)\le \sum\limits_{k=0}^N M_\infty(w_k)
<\frac{n}{p^\prime_\infty}.
\end{equation}

 \begin{remark}\label{rem4.3.} \textit{For every $p_0\in (1,p_-)$  there hold the implications}
  $$\varrho\in V_{p(\cdot)}(\Om,\Pi) \ \Longrightarrow \ \varrho^{-p_0}\in
  V_{(\widetilde{p})^\prime(\cdot)}(\Om,\Pi)$$
\textit{and}
$$\varrho\in V^{osc}_{p(\cdot)}(\Om,\Pi) \ \Longrightarrow \ \varrho^{-p_0}\in
   V^{osc}_{(\widetilde{p})^\prime(\cdot)}(\Om,\Pi),$$
\textit{where $\widetilde{p}(x)=\frac{p(x)}{p_0}$.}
\end{remark}

\vspace{5mm} \subsection{The boundedness of the Hardy-Littlewood
maximal operator on metric spaces with doubling measure, in
weighted Lebesgue spaces with variable exponent}\label{subsec3.}

The following statements are valid.

 \begin{theorem} \label{th3.1.} \textit{ Let $X$ be a metric space with doubling
 measure and
  let $\Om$ be bounded. If  $p\in \mathcal{P}(\Om)\cap WL(\Om)$ and $\varrho\in
V_{p(\cdot)}^{osc}(\Om,\Pi)$, then $\mathcal{M}$ is bounded in the
space  $L_\varrho^{p(\cdot)}(\Om)$.}
\end{theorem}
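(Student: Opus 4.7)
The plan is to verify directly that $\varrho \in \mathfrak{A}_{p(\cdot)}(\Om)$, i.e.\ to establish $\|\varrho \mathcal{M} f\|_{p(\cdot)} \le C \|\varrho f\|_{p(\cdot)}$, by combining the unweighted boundedness of $\mathcal{M}$ on $L^{p(\cdot)}(\Om)$ (due to Harjulehto, H\"ast\"o and Pere \cite{224b} and available because $p \in \mathcal{P}(\Om) \cap WL(\Om)$) with a careful local analysis of each radial factor $w_k(d(\cdot, x_k))$. Since $\Om$ is bounded, the factor $w_0([1+d(x_0,\cdot)])$ is bounded above and below, so condition \eqref{f27dco} plays no role and only \eqref{2.6} is used.

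First I would reduce to a single singularity. Because the points $x_1,\dots,x_N$ are distinct, each $w_j(d(\cdot,x_j))$ with $j\ne k$ is bounded away from $0$ and $\infty$ on a small neighborhood of $x_k$; using a cover of $\Om$ by such neighborhoods together with a ``far'' region on which $\varrho$ itself is bounded above and below, the sublinearity of $\mathcal{M}$ reduces the problem to the case of a single weight $w(d(\cdot,x_0))$ with $w \in \widetilde{U}$ satisfying $-\underline{\mathfrak{dim}}(\Om)/p(x_0) < m(w) \le M(w) < \underline{\mathfrak{dim}}(\Om)/p^\prime(x_0)$. Using the Bary--Stechkin characterization \eqref{m�} and the strictness in \eqref{2.6}, I can choose $\ve>0$ so small that the two-sided power envelope $c_1 t^{M(w)+\ve} \le w(t) \le c_2 t^{m(w)-\ve}$ holds while the envelope exponents still lie strictly inside the admissible Muckenhoupt-type range.

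Next I split $\mathcal{M}f(x)$ into two pieces according to whether the ball $B(x,r)$ contains $x_0$ or not. For the off-centre part, the almost-monotonicity of $w$ and the doubling of $\mu$ yield $w(d(y,x_0)) \asymp w(d(x,x_0))$ uniformly for $y \in B(x,r)$, giving the pointwise transference $w(d(x,x_0)) \mathcal{M}_{\mathrm{off}}f(x) \lesssim \mathcal{M}(wf)(x)$, so the unweighted Harjulehto--H\"ast\"o--Pere theorem closes this piece. For the central part, I decompose the support into dyadic shells $\{2^{-j-1}\ell \le d(\cdot,x_0) \le 2^{-j}\ell\}$ and bound $\mathcal{M}_{\mathrm{cen}}f(x)$ by a Hardy-type operator on these shells; the measure-growth built into the definition of $\underline{\mathfrak{dim}}(\Om)$ together with the envelope exponents from the previous step gives geometric convergence, after first using the $WL$-condition \eqref{1.3} to freeze $p(\cdot)$ to $p(x_0)$ on each shell with uniformly controlled error.

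The main obstacle is the central-ball piece, where the $WL$-condition, the Bary--Stechkin indices of $w$, and the non-integer local dimension $\underline{\mathfrak{dim}}(\Om)$ must all be reconciled simultaneously. Unlike the Euclidean power-weight case, where classical Muckenhoupt theory is directly available, here the geometric convergence of the shell sum must be verified inside the $L^{p(\cdot)}$ modular, relying on the semimultiplicative formulation of $\underline{\mathfrak{dim}}$ in \eqref{ggbvcdshnew}--\eqref{ksajlkJ} to estimate the ratios $\mu B(x_0, 2^{-j}\ell)/\mu B(x_0, 2^{-j-1}\ell)$ uniformly in $j$.
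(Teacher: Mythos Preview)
The paper does not actually prove Theorem~\ref{th3.1.} in the text: immediately after the statement it refers to \cite{317c} and \cite{JFSA} for the Euclidean and Carleson-curve cases and asserts that the general doubling-measure proof ``in main is similar, being based on the approaches used in the proofs for the case of Carleson curves.'' Your plan---localize to a single radial factor, split $\mathcal{M}$ according to the position of the averaging ball relative to the singular point, handle the far piece by pointwise weight transference combined with the unweighted variable-exponent maximal theorem of \cite{224b}, and handle the near piece by a dyadic-shell Hardy-type estimate after freezing $p(\cdot)$ to $p(x_0)$ via the $WL$ condition and invoking the index inequalities \eqref{2.6} against $\underline{\mathfrak{dim}}(\Om)$---is exactly the scheme carried out in those references, so your proposal is aligned with what the paper intends.

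One technical point does need adjustment: the dichotomy ``$B(x,r)$ contains $x_0$ or not'' does \emph{not} deliver the comparability $w(d(y,x_0))\asymp w(d(x,x_0))$ you claim for the off-centre piece. If $x_0\notin B(x,r)$ but $r$ is only slightly smaller than $d(x,x_0)$, points $y\in B(x,r)$ can lie arbitrarily close to $x_0$, and then $w(d(y,x_0))$ and $w(d(x,x_0))$ are in general not comparable. The split actually used in \cite{JFSA} is by the size of the radius relative to the distance to the singularity, e.g.\ $r<\tfrac{1}{2}d(x,x_0)$ versus $r\ge\tfrac{1}{2}d(x,x_0)$; in the first case the triangle inequality gives $d(y,x_0)\asymp d(x,x_0)$ and your transference argument goes through, while the second case is absorbed into the ``central'' Hardy-type analysis. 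With this correction your outline is sound.
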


\begin{theorem} \label{th3.2.}  \textit{ Let $X$ be a metric space with doubling
 measure and
  let $\Om$ be
unbounded. Let $p\in \mathcal{P}(\Om)\cap WL(\Om)$ and  let there
exist $R>0$ such that $p(x)\equiv p_\infty=const $ for $x\in \Om
\backslash{B(x_0,R)}$. If $\varrho\in
V^{osc}_{p(\cdot)}(\Om,\Pi)$, then $\mathcal{M}$ is bounded in the
space
 $L_\varrho^{p(\cdot)}(\Om)$.}
\end{theorem}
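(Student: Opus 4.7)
The idea is to reduce Theorem~\ref{th3.2.} to the bounded case (Theorem~\ref{th3.1.}) together with the classical constant-exponent Muckenhoupt theory on the region where $p\equiv p_\infty$. First I would fix some $R_0\ge 2R$, set $\Om_0:=\Om\cap B(x_0,R_0)$ and $\Om_\infty:=\Om\setminus B(x_0,R_0)$, and split $f=f_0+f_\infty$ with $f_j=f\chi_{\Om_j}$. By sublinearity, $\mathcal{M}f\le\mathcal{M}f_0+\mathcal{M}f_\infty$, and I would estimate the $L^{p(\cdot)}_\varrho$-norm of each summand on $\Om_0$ and on $\Om_\infty$ separately.

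The two in-region estimates would come directly from the tools available. On the bounded set $\Om_0$ the hypotheses of Theorem~\ref{th3.1.} are satisfied after restriction, giving $\|\mathcal{M}f_0\|_{L^{p(\cdot)}_\varrho(\Om_0)}\le C\|f_0\|_{L^{p(\cdot)}_\varrho(\Om_0)}$. On $\Om_\infty$ the exponent is constant, so $L^{p(\cdot)}_\varrho(\Om_\infty)=L^{p_\infty}_\varrho(\Om_\infty)$, and the classical Muckenhoupt result quoted in Section~\ref{sec1} yields $\|\mathcal{M}f_\infty\|_{L^{p_\infty}_\varrho(\Om_\infty)}\le C\|f_\infty\|_{L^{p_\infty}_\varrho(\Om_\infty)}$, provided that one has $\varrho^{p_\infty}\in A_{p_\infty}(\Om_\infty)$.

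For the cross-terms I would argue pointwise. If $x\in\Om_\infty$, then any ball $B(x,r)$ meeting $\Om_0$ must satisfy $r\ge\tfrac12 d(x,x_0)$, so doubling gives
\begin{equation*}
\mathcal{M}f_0(x)\le\frac{C}{\mu B\bigl(x_0,d(x,x_0)\bigr)}\intl_{\Om_0}|f_0(y)|\,d\mu(y),
\end{equation*}
and H\"older's inequality in $L^{p(\cdot)}_\varrho(\Om_0)$ then majorizes the right-hand side by $\|f_0\|_{L^{p(\cdot)}_\varrho(\Om_0)}$ times an explicit function of $x$ whose $L^{p_\infty}_\varrho(\Om_\infty)$-norm is finite by virtue of the upper bound in (\ref{f27dco}). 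The symmetric estimate for $\mathcal{M}f_\infty$ on $\Om_0$ is similar and simpler, as $\Om_0$ is bounded and only the local behaviour of $\varrho$ near $\Om_0$ matters.

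The main obstacle is the verification of $\varrho^{p_\infty}\in A_{p_\infty}(\Om_\infty)$. The weight (\ref{2.4}) is a product of radial Zygmund--Bary--Stechkin factors $w_k[d(\cdot,x_k)]$ together with the asymptotic factor $w_0[1+d(x_0,\cdot)]$; on $\Om_\infty$ the factors with $k\ge 1$ carry no singularities, and their joint oscillation at infinity is encoded by the indices $m_\infty(w_k),M_\infty(w_k)$. The assumption (\ref{f27dco}) is calibrated precisely so that $\varrho^{p_\infty}$ satisfies the Muckenhoupt $A_{p_\infty}$-inequality uniformly over balls in $\Om_\infty$ of arbitrary radius and centre, interpreted through the local lower and upper dimensions $\underline{\mathfrak{dim}}_\infty(\Om),\overline{\mathfrak{dim}}_\infty(\Om)$. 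Making this quantitative --- essentially a metric-measure-space variant of the classical power-weight $A_p$ calculation, with variable local dimension replacing the Euclidean exponent --- is the technical heart of the argument.
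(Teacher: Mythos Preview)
The paper does not actually supply a proof of this theorem: after stating Theorems~\ref{th3.1.} and~\ref{th3.2.} it only remarks that ``the proof \ldots\ in the general case in main is similar, being based on the approaches used in the proofs for the case of Carleson curves'' and refers the reader to \cite{JFSA} and \cite{newmetric} for the details. So there is no in-text argument to compare your proposal against line by line.

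That said, your outline is precisely the scheme used in those references (and in \cite{317c} for the Euclidean case): decompose $\Om$ into a bounded piece containing the singular points $x_k$ and an exterior piece where $p\equiv p_\infty$, invoke the bounded-domain result on the former and constant-exponent Muckenhoupt theory on the latter, and control the cross-terms by the crude pointwise bound $\mathcal{M}f_0(x)\lesssim \mu B(x_0,d(x,x_0))^{-1}\|f_0\|_{L^1}$ coming from doubling. You have also correctly located the only substantive work, namely checking that the index condition (\ref{f27dco}) forces $\varrho^{p_\infty}\in A_{p_\infty}$ on the exterior region; this is exactly the computation carried out in \cite{JFSA} for Carleson curves and transplanted to the general doubling setting in \cite{newmetric}, with the numbers $\underline{\mathfrak{dim}}_\infty(\Om)$, $\overline{\mathfrak{dim}}_\infty(\Om)$ playing the role of the fixed dimension. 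One small caution: the Muckenhoupt result you quote from Section~\ref{sec1} is stated on all of $X$, so in executing the argument you should either verify the $A_{p_\infty}$ condition over balls of $X$ (not merely those contained in $\Om_\infty$) or, equivalently, extend $\varrho$ suitably and work globally---this is routine but should be mentioned.
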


\vspace{3mm} The Euclidean version of Theorems \ref{th3.1.} and \ref{th3.2.}  was proved in  \cite{106} in the
non-weighted  case and in \cite{317c}, \cite{JFSA} in the weighted case; in \cite{JFSA} there were also proved
the corresponding  versions of Theorems \ref{th3.1.} and \ref{th3.2.} for the maximal operator on Carleson curves
(a typical example of metric measure spaces with constant dimension). The proof of Theorems \ref{th3.1.} and
\ref{th3.2.} in the general case in main is similar, being based on the approaches used in the proofs for the
case of Carleson curves.

\begin{theorem} \label{erz}  Let $\Om$ be a bounded open set in a doubling measure metric space $X$, let
the exponent $p(x)$ satisfy conditions (\ref{1.1}), (\ref{1.3}).
Then the operator $\mathcal{M}$ is bounded in
$L^{p(\cdot)}_\varrho(\Om)$, if $$[\varrho(x)]^{p(x)}\in
A_{p_-}(\Om).$$
\end{theorem}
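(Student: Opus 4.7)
The strategy is to reduce the claim to a modular inequality and then combine Diening's freezing-of-exponent technique (enabled by (\ref{1.3})) with the classical weighted $L^{p_-}(w)$-bound for the maximal operator, available because $w := \varrho^{p(\cdot)} \in A_{p_-}(\Om)$ by hypothesis.

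First I would reduce to proving the modular bound
$$\int_\Om (\mathcal{M}f(x))^{p(x)} w(x)\,d\mu(x) \le C \quad \text{whenever} \quad \int_\Om |f(x)|^{p(x)} w(x)\,d\mu(x) \le 1,$$
which is equivalent to the operator boundedness up to constants because $1<p_-\le p_+<\infty$ and $w(\Om) < \infty$ (the latter from $A_{p_-}\subset A_\infty$ on a bounded set).

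Next, from (\ref{1.3}) together with doubling I would derive the exponent-freezing inequality $(\mu B(x,r))^{|p(x)-p(y)|} \le C$ for $y\in B(x,r)$ with sufficiently small $r$. Combining this with Hölder's inequality (with variable exponent $p(x)$ frozen on each ball) inside the definition of $\mathcal{M}f(x)$, and carefully tracking the weight $w$ via its $A_{p_-}$ condition (which controls the weighted averages of $w^{-1/(p_- -1)}$), leads to the Diening-type weighted pointwise estimate
$$[\mathcal{M}f(x)]^{p(x)} w(x) \le C\,\mathcal{M}_w\bigl(|f|^{p(\cdot)} w\bigr)(x) + C\bigl(1+h(x)\bigr), \qquad x \in \Om,$$
where $\mathcal{M}_w$ denotes the centered maximal operator with respect to $w\,d\mu$ and $h\in L^1(\Om,d\mu)$.

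Finally, integrating this inequality against $d\mu$ over $\Om$ and applying the $L^q(w)$-boundedness of $\mathcal{M}_w$ for some $q>1$ (furnished by the reverse-Hölder property of $w\in A_{p_-}$) produces the desired modular bound on $\int (\mathcal{M}f)^{p(x)} w\,d\mu$. Boundedness of $\Om$ enters both to guarantee $w(\Om)<\infty$ and to absorb the lower-order terms arising from the exponent-freezing argument.

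\emph{Main obstacle.} The principal technical hurdle is the derivation of the weighted pointwise estimate: one must carry the weight $w$ through a Hölder-inequality argument linearizing the variable exponent on small balls, while keeping the correction $h$ integrable with respect to $d\mu$ (not merely with respect to $w\,d\mu$). This requires a careful interplay between the reverse-Hölder inequality for $w\in A_{p_-}$ and the log-Hölder condition (\ref{1.3}) at the critical scale $r\to 0$. Once this pointwise bound is in place, the rest of the proof is a metric measure space adaptation of the scheme used for Carleson curves in \cite{317c}, \cite{JFSA}.
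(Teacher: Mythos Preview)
The paper does not give a self-contained proof of this theorem; immediately after the statement it only refers to \cite{newmetric} and \cite{JFSA} (the cross-reference is literally to Theorem~\ref{th3.2.}, presumably a slip for Theorem~\ref{erz}), saying that the argument for a general doubling metric space is ``in fact the same'' as the Carleson-curve case treated there. So there is no in-text proof to compare against; your outline has to be matched with the scheme in those references.

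Your global strategy --- pass to a modular inequality, freeze the exponent via (\ref{1.3}), and feed the result into the classical weighted $L^{p_-}(w)$-bound for $\mathcal M$ with $w=\varrho^{p(\cdot)}\in A_{p_-}$ --- is precisely the method of \cite{JFSA}, \cite{newmetric}, and it is the right one. The problematic part is your concrete pointwise step. The estimate one actually obtains from Diening's argument in \cite{106} (adapted to the metric doubling setting) is the \emph{unweighted}
\[
[\mathcal M f(x)]^{p(x)}\le C\,\bigl[\mathcal M\bigl(|f|^{p(\cdot)/p_-}\bigr)(x)\bigr]^{p_-}+C
\]
for suitably normalised $f$; the weight $w$ plays no role here. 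Multiplying by $w(x)$ and integrating over $\Om$ gives
\[
\intl_\Om [\mathcal M f]^{p(x)}w\,d\mu\le C\intl_\Om \bigl[\mathcal M\bigl(|f|^{p(\cdot)/p_-}\bigr)\bigr]^{p_-}w\,d\mu+Cw(\Om),
\]
and now $w\in A_{p_-}$ applies \emph{directly} to the ordinary maximal operator $\mathcal M$ in $L^{p_-}(w)$, yielding $C\int|f|^{p(\cdot)}w\,d\mu+Cw(\Om)$. Your alternative, with $\mathcal M_w(|f|^{p(\cdot)}w)$ on the right-hand side and then ``integrating against $d\mu$'', runs into a mismatch: $\mathcal M_w$ is bounded on $L^q(w\,d\mu)$, not on $L^1(d\mu)$, so after integrating you cannot close the estimate without an extra weighted/unweighted comparison that would itself require the $A_{p_-}$ hypothesis. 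In short, bringing $w$ inside the maximal operator at the pointwise stage is both unnecessary and technically awkward; keep $w$ outside until the very last step and your plan goes through exactly as in the cited references.
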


\vspace{3mm}We refer to \cite{newmetric} for Theorem \ref{th3.2.},
its detailed proof for the case where $X$ is a Carleson curve  is
given in \cite{JFSA}, the proof for a doubling measure metric
space being in fact the same.

\vspace{5mm} \section{Extrapolation theorem on metric measure
spaces}\label{subs4.}

\setcounter{equation}{0} \setcounter{theorem}{0}

In the sequel $\mathcal{F}=\mathcal{F}(\Om)$ denotes a family  of
ordered pairs
 $(f,g)$ of non-negative $\mu$-measurable functions  $f,g$,
 defined  on an open set   $\Om \subset X$. When saying that  there holds
 an
 inequality of  type (\ref{new1})
for all pairs  $(f,g)\in \mathcal{F}$ and weights $w\in A_1$, we always mean that it is valid for all the pairs,
for which the left-hand side is finite, and that the constant $c$ depends only on $p_0,q_0$ and the
$A_1$-constant of the weight.

In what follows, by $p_0$ and $q_0$ we denote  positive  numbers
such that
\begin{equation}\label{vgschi}
0<p_0\le q_0<\infty, \quad p_0<p_- \quad \textrm{and} \quad
\frac{1}{p_0}- \frac{1}{p_+}<\frac{1}{q_0}
\end{equation}
and use the notation
\begin{equation}\label{nfse3a}
\widetilde{p}(x)=\frac{p(x)}{p_0},  \quad
\widetilde{q}(x)=\frac{q(x)}{q_0}.
\end{equation}
\begin{remark}\label{nond} The extrapolation Theorem \ref{th4.1.}  with variable exponents in the
non-weighted case $\varrho(x)\equiv 1$ and in the Euclidean setting was proved in \cite{101zb}. For extrapolation
theorems in the case of constant exponents we refer to \cite{522b}, \cite{HMS}.

Observe that  the measure $\mu$ in Theorem \ref{th4.1.} is not assumed to be doubling.
\end{remark}

 \begin{theorem}\label{th4.1.}
Let  $X$ be a metric measure space and  $\Om$  an open set in $X$. Assume that for some $p_0$ and $q_0$,
satisfying conditions (\ref{vgschi}) and every weight $w\in A_1(\Om)$ there holds the inequality
\begin{equation}\label{new1}
\left(\intl\limits_{\Om}f^{q_0}(x)w(x)d\mu(x)\right)^\frac{1}{q_0}\le
c_0
\left(\intl\limits_{\Om}g^{p_0}(x)[w(x)]^\frac{p_0}{q_0}d\mu(x)\right)^\frac{1}{p_0}
\end{equation}
for all $f,g$ in a given family $\mathcal{F}$. Let the variable
exponent $q(x)$ be defined by
\begin{equation}\label{s9x34}
\frac{1}{q(x)}=
\frac{1}{p(x)}-\left(\frac{1}{p_0}-\frac{1}{q_0}\right),
\end{equation}
let  the exponent
 $p(x)$ and the weight $\varrho(x)$ satisfy the conditions
\begin{equation}\label{new1bcxc54esa}
p \in \mathcal{P}(\Om) \quad \textrm{and}\ \quad
\varrho^{-q_0} \in
 \mathfrak{A}_{(\widetilde{q})^\prime}(\Om).
\end{equation}
Then for all $(f,g)\in\mathcal{F}$ with $f\in
L_\varrho^{p(\cdot)}(\Om)$ the inequality
\begin{equation}\label{new2}
 \|f\|_{L_\varrho^{q(\cdot)}} \le C
\|g\|_{L_\varrho^{p(\cdot)}}
\end{equation}
 is valid with a constant  $C>0$, not depending on $f$ and $g$.
\end{theorem}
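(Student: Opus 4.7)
The plan is to combine the standard duality for variable Lebesgue spaces with a Rubio de Francia iteration performed in the weighted dual space $L^{(\wt{q})'(\cdot)}_{\varrho^{-q_0}}$ supplied by hypothesis \eqref{new1bcxc54esa}, in the spirit of \cite{101zb} but carried out in the weighted metric-measure setting.

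\textbf{Setup.} Let $\wt{p}(x)=p(x)/p_0$ and $\wt{q}(x)=q(x)/q_0$. Conditions \eqref{vgschi} and \eqref{s9x34} ensure that both $\wt{p}$ and $\wt{q}$ lie in $\mathcal{P}(\Om)$. By the power rule and variable-exponent duality,
$$\|f\|_{L_\varrho^{q(\cdot)}}^{q_0}=\|\varrho^{q_0}f^{q_0}\|_{L^{\wt{q}(\cdot)}}\le C\sup_{h}\intl_\Om \varrho^{q_0}(x)f^{q_0}(x)h(x)\,d\mu(x),$$
the supremum running over $h\ge 0$ with $\|h\|_{L^{(\wt{q})'(\cdot)}}\le 1$. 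Fix such an $h$ and put $h':=\varrho^{q_0}h$, so that $\|h'\|_{L_{\varrho^{-q_0}}^{(\wt{q})'(\cdot)}}=\|h\|_{L^{(\wt{q})'(\cdot)}}\le 1$. Hypothesis \eqref{new1bcxc54esa} says that $\mathcal{M}$ is bounded on $L_{\varrho^{-q_0}}^{(\wt{q})'(\cdot)}(\Om)$; denote its norm by $T$. Define
$$\cR h'(x):=\sum_{k=0}^\infty\frac{\mathcal{M}^k h'(x)}{(2T)^k}.$$
Standard properties of this Rubio de Francia iterate yield $h'\le \cR h'$, $\|\cR h'\|_{L_{\varrho^{-q_0}}^{(\wt{q})'(\cdot)}}\le 2$, and $\mathcal{M}(\cR h')\le 2T\cdot \cR h'$, so $\cR h'\in A_1(\Om)$ with $A_1$-constant at most $2T$, uniformly in $h$.

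\textbf{Application of the hypothesis.} Since $\cR h'\ge \varrho^{q_0}h$, inequality \eqref{new1} with the $A_1$-weight $w=\cR h'$ gives
$$\intl_\Om\varrho^{q_0}f^{q_0}h\,d\mu\le\intl_\Om f^{q_0}\cR h'\,d\mu\le c_0^{q_0}\left(\intl_\Om g^{p_0}(\cR h')^{p_0/q_0}\,d\mu\right)^{q_0/p_0}.$$
Rewriting the right-hand integrand as $(\varrho g)^{p_0}\bigl(\varrho^{-q_0}\cR h'\bigr)^{p_0/q_0}$ and applying H\"older's inequality in $L^{\wt{p}(\cdot)}(\Om)$,
$$\intl_\Om g^{p_0}(\cR h')^{p_0/q_0}\,d\mu\le C\|g\|_{L_\varrho^{p(\cdot)}}^{p_0}\cdot\bigl\|\varrho^{-q_0}\cR h'\bigr\|_{L^{r(\cdot)}}^{p_0/q_0},\qquad r(x):=(\wt{p})'(x)\cdot\tfrac{p_0}{q_0}.$$
A direct calculation from \eqref{s9x34} (multiplying $1/q(x)-1/q_0=1/p(x)-1/p_0$ by $p(x)p_0q(x)q_0$) yields the pointwise identity $r(x)=(\wt{q})'(x)$, so the last factor is bounded by $2^{p_0/q_0}$ by the norm estimate on $\cR h'$ obtained above. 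Combining these bounds and taking the supremum over $h$ produces \eqref{new2} with a constant depending only on $c_0$, $p_0$, $q_0$, and $T$.

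\textbf{Main obstacle.} The delicate point is the bookkeeping of the weights: one must iterate $\mathcal{M}$ on the \emph{weighted} dual space $L_{\varrho^{-q_0}}^{(\wt{q})'(\cdot)}$ — which is precisely the space in which hypothesis \eqref{new1bcxc54esa} delivers boundedness — rather than on the unweighted $L^{(\wt{q})'(\cdot)}$. This dictates the choice $h'=\varrho^{q_0}h$ and allows the $A_1$-weight $\cR h'$ to absorb the factor $\varrho^{q_0}$ appearing in the duality pairing. The second non-trivial item is the algebraic identity $(\wt{p})'\cdot p_0/q_0=(\wt{q})'$, which is exactly what makes the H\"older reduction close up with the estimate produced by Rubio de Francia; the conditions $p_0<p_-$ and $1/p_0-1/p_+<1/q_0$ in \eqref{vgschi} are needed to keep $\wt{p}$ and $\wt{q}$ in $\mathcal{P}(\Om)$ so that both duality and H\"older's inequality are legitimate in these variable Lebesgue spaces.
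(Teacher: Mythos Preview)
Your argument is correct and is essentially the paper's own proof: duality in $L^{\wt{q}(\cdot)}$, a Rubio de Francia iterate built from the boundedness of $\mathcal{M}$ on the weighted dual $L^{(\wt{q})'(\cdot)}_{\varrho^{-q_0}}$, application of \eqref{new1} with the resulting $A_1$-weight, and the identity $(\wt{p})'=\tfrac{q_0}{p_0}(\wt{q})'$ to close the H\"older estimate. The only step the paper spells out that you pass over is the preliminary check (via H\"older and the bound $\|\cR h'\|_{L^{(\wt{q})'}_{\varrho^{-q_0}}}\le 2$) that $\intl_\Om f^{q_0}\,\cR h'\,d\mu<\infty$, needed because, by the convention stated just before the theorem, \eqref{new1} is asserted only for pairs with finite left-hand side.
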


\begin{proof} By the Riesz theorem, valid for the spaces with variable
exponent in the case $1<p_-\le p_+<\infty$, (see \cite{332},
\cite{575a}), we have
$$\|f\|_{L_\varrho^{q(\cdot)}}^{q_0}= \|f^{q_0}\varrho^{q_0}\|_{L^{\widetilde{q}(\cdot)}}\le
\sup \intl_\Om f^{p_0}(x)h(x)d\mu(x),  $$ where we assume that $f$
is non-negative and $\sup $ is taken with respect to all
non-negative $h$ such that
$\|h\varrho^{-q_0}\|_{L^{(\widetilde{q})^\prime(\cdot)}}\le 1$. We
fix any such a function $h$. Let us show  that
\begin{equation}\label{4.2}
\intl\limits_{\Om}f^{q_0}(x)h(x)d\mu(x)\le C
\|g\varrho\|^{q_0}_{L^{q(\cdot)}}
\end{equation}
for an arbitrary pair   $(f,g)$  from the given family
$\mathcal{F}$ with a constant  $C>0$, not depending on  $h, f$ and
$g$. By the assumption  $\varrho^{-q_0} \in
 \mathfrak{A}_{(\widetilde{q})^\prime}(\Om)$   we have
\begin{equation}\label{4.3}
\|\varrho^{-q_0}\mathcal{M}\vi\|_{L^{\widetilde{q}^\prime(\cdot)}(\Om)}\le
C_0 \|\varrho^{-q_0}\vi\|_{L^{\widetilde{p}^\prime(\cdot)}(\Om)}
\end{equation}
where the constant  $C_0>0$ does not depend on $\vi$.

We make use of the following construction which is due to Rubio de
Francia \cite{522b}
\begin{equation}\label{4.4}
S\vi(x)=\sum\limits_{k=0}^\infty (2C_0)^{-k}\mathcal{M}^k\vi(x),
\end{equation}
where $\mathcal{M}^k$ is the  $k$-iterated maximal operator and
$C_0$ is the constant from  (\ref{4.3}) (one may take  $C_0\ge
1$). The following statements are obvious:

1) \ $\vi(x) \le S\vi (x), \ \  x\in\Om $ \ for any non-negative
function $\vi$;
\begin{equation}\label{4.5}
\ \ \ \  \ \ 2) \ \ \ \hspace{24mm}
\|\varrho^{-q_0}S\vi\|_{L^{(\widetilde{q})^\prime}(\Om)} \le 2
\|\varrho^{-q_0}\vi\|_{L^{(\widetilde{q})^\prime}(\Om)},
\hspace{36mm}
\end{equation}

3) \ \ $\mathcal{M}(S\vi)(x)\le 2C_0 S\vi(x), \ \ \  \ x\in\Om,$

\noindent so that  $S\vi\in A_1(\Om)$ with the $A_1$-constant not
depending on  $\vi$.  Therefore $S\vi\in A_{q_0}(\Om)$.

By 1), for $\vi=h$ we have
\begin{equation}\label{4.6}
\intl_\Om f^{q_0}(x)h(x)d\mu(x) \le  \intl_\Om
f^{q_0}(x)Sh(x)d\mu(x).
\end{equation}
By the H\"older inequality for variable exponent, property 2) and
the condition $f\in L_\varrho^{q(\cdot)}$, we have
$$\intl_\Om
f^{q_0}(x)Sh(x)d\mu(x) \le k
\|f^{q_0}\varrho^{q_0}\|_{L^{\widetilde{q}(\cdot)}} \cdot
\|\varrho^{-q_0}Sh\|_{L^{(\widetilde{q})^\prime(\cdot)}}$$
$$\le C \|f\varrho\|^{q_0}_{L^{q(\cdot)}} \cdot
\|h\varrho^{-q_0}\|_{L^{(\widetilde{q})^\prime(\cdot)}} \le C \|f\varrho\|_{L^{q(\cdot)}}^{q_0}
<\infty .$$ Consequently,  the integral
 $\intl_\Om f^{q_0}(x)Sh(x)d\mu(x)$ is finite, which enables us  to
 make use of condition  (\ref{new1}) with respect to
 the right-hand side of  (\ref{4.6}). Condition (\ref{new1}) being assumed to be valid with an
  arbitrary weight $w\in A_1$, is in particular valid for $w=Sh$. Therefore,
$$\intl_\Om f^{q_0}(x)Sh(x)d\mu(x)\le C \left(\intl_\Om g^{p_0}(x)[Sh(x)]^\frac{p_0}{q_0}
d\mu(x)\right)^\frac{q_0}{p_0}.$$ Applying the H\"older inequality
on the right-hand side, we get
$$
 \intl_\Om
f^{q_0}(x)Sh(x)d\mu(x)\le C \left(\|g^{p_0}\varrho^{p_0}\|_{L^\frac{p(\cdot)}{p_0}}
\left\|(Sh)^\frac{p_0}{q_0}\varrho^{-p_0}\right\|_{L^{(\widetilde{p})^\prime}}\right)^\frac{q_0}{p_0}.
$$
Thus
\begin{equation}\label{4.7}
\intl_\Om f^{q_0}(x)Sh(x)d\mu(x)\le C \left\|\varrho g\right\|_{L^{p(\cdot)}}^{q_0}
\left\|\varrho^{-p_0}
(Sh)^\frac{p_0}{q_0}\right\|^\frac{q_0}{p_0}_{L^{(\widetilde{p})^\prime(\cdot)}}.
\end{equation}

From (\ref{s9x34}) we easily obtain that
$(\widetilde{p})^\prime(x)=\frac{q_0}{p_0}(\widetilde{q})^\prime(x)$ and then
$$\left\|\varrho^{-p_0}
(Sh)^\frac{p_0}{q_0}\right\|^\frac{q_0}{p_0}_{L^({\widetilde{p})^\prime(\cdot)}}=
\left\|\varrho^{-q_0}
Sh\right\|_{L^{\widetilde{q}^\prime}(\cdot)}.$$ Consequently,
\begin{equation}\label{4f7}
\intl_\Om f^{q_0}(x)Sh(x)d\mu(x)\le C \left\|\varrho
g\right\|_{L^{p(\cdot)}}^{q_0} \left\|\varrho^{-q_0}
Sh\right\|_{L^{\widetilde{q}^\prime}(\cdot)}.
\end{equation}
 To prove (\ref{4.2}), in view
of (\ref{4f7}) it suffices to show that $\left\|\varrho^{-q_0}
Sh\right\|_{L^{\widetilde{q}^\prime}(\cdot)}$ may be estimated by
a constant not depending on  $h$. This follows from (\ref{4.5})
and the condition
$\|h\varrho^{-q_0}\|_{L^{(\widetilde{q})^\prime(\cdot)}}\le 1$ and
proves the theorem.
\end{proof}

\begin{remark}\label{rem1}
It is easy to check that in view of Theorem \ref{erz} the condition
\begin{equation}\label{nasiu6f}
[\varrho(y)]^{q_1(y)} \in A_s, \quad \textrm{where} \quad q_1(y)= \frac{q(y)(q_+-q_0)}{q(y)-q_0} \ \textrm{and} \
s= \frac{q_+}{q_0},
\end{equation}
is sufficient for the validity of the condition $\varrho^{-q_0} \in
 \mathfrak{A}_{(\widetilde{q})^\prime}(\Om)$ of Theorem \ref{th4.1.}.

\end{remark}

By means of Theorems \ref{th3.1.} and \ref{th3.2.}, we obtain the following statement as an immediate consequence
of Theorem \ref{th4.1.} in which we denote
$$\gm= \frac{1}{p_0}-\frac{1}{q_0}.$$

 \begin{theorem}\label{th4.2.} Let $X$ be a metric space
 with doubling
measure  and  $\Om$  an open set in $X$. Let also the following be satisfied\\
 1) \ $p\in
\mathcal{P}(\Om)\cap WL(\Om)$, and in the case $\Om$ is an unbounded set, let $p(x)\equiv
p_\infty=const$ for $x\in
\Om\backslash B(x_0,R)$ with some $x_0\in \Om$ and $R>0$;\\
2) \ there holds inequality (\ref{new1}) for some $p_0$ and $q_0$
satisfying the assumptions in (\ref{vgschi}) and   all
$(f,g)\in\mathcal{F}$ from some family $\mathcal{F}$ and every
weight  $w\in A_1(\Om)$.
  Then\\
  I) \  there holds
inequality (\ref{new2})  for all pairs $(f,g)$ from the same
family $\mathcal{F}$ , such that   $f\in
L_\varrho^{p(\cdot)}(\Om)$ and weights   $\varrho$ of form
(\ref{2.4}) where
\begin{equation}\label{jui7}
\left(\gm-\frac{1}{p(x_k)}\right)\underline{\mathfrak{dim}}(\Om)
<m(w_k)\le M(w_k)<
\left(\frac{1}{p^\prime(x_k)}-\frac{1}{p^\prime_0}\right)\underline{\mathfrak{dim}}(\Om)
\end{equation}
and, in case $\Om$ is unbounded,
\begin{equation}\label{jun70}
\dl +
\left(\gm-\frac{1}{p_\infty}\right)\underline{\mathfrak{dim}}(\Om)<\sum\limits_{k=0}^N
m(w_k)\le \sum\limits_{k=0}^N M(w_k)<
\left(\frac{1}{p^\prime_\infty}-\frac{1}{p^\prime_0}\right)\underline{\mathfrak{dim}}(\Om),
\end{equation}
where
$$\dl = \left[\overline{\mathfrak{dim}}_\infty(\Om)-\underline{\mathfrak{dim}}_\infty(\Om)\right]
\left(\frac{1}{p_0}- \frac{1}{p_\infty}\right);$$
\\
II) \  in case  inequality (\ref{new1}) holds for all $p_0\in
(1,p_-)$, the term $\frac{1}{p^\prime_0}$ in (\ref{jui7}) and
(\ref{jun70}) may be omitted and $\dl$ may be taken in the form
$\dl
=\left[\overline{\mathfrak{dim}}_\infty(\Om)-\underline{\mathfrak{dim}}_\infty(\Om)\right]
\left(\frac{1}{p_-}- \frac{1}{p_\infty}\right).$
\end{theorem}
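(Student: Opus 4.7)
The plan is to derive Theorem \ref{th4.2.} by a direct application of the extrapolation Theorem \ref{th4.1.} with the given pair $(p_0,q_0)$. Hypothesis 2) is exactly (\ref{new1}), and $q(x)$ is defined by (\ref{s9x34}); the only non-trivial verification is therefore the abstract Muckenhoupt-type requirement $\varrho^{-q_0}\in \mathfrak{A}_{(\widetilde q)^\prime}(\Om)$ from (\ref{new1bcxc54esa}). By definition this means boundedness of $\mathcal M$ in $L^{(\widetilde q)^\prime(\cdot)}_{\varrho^{-q_0}}(\Om)$, so I would reduce it to a concrete radial-weight condition by invoking Theorems \ref{th3.1.}/\ref{th3.2.} with exponent $(\widetilde q)^\prime(\cdot)$ and weight $\varrho^{-q_0}$.

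First the exponent side. From $1/q(x)=1/p(x)-\gm$ and $p\in\mathcal{P}(\Om)\cap WL(\Om)$ one obtains $q\in\mathcal{P}(\Om)\cap WL(\Om)$, while $p_0<p_-$ yields $q_->q_0$, so $\widetilde q=q/q_0$ is bounded below by a constant strictly greater than $1$ and hence $(\widetilde q)^\prime\in\mathcal{P}(\Om)\cap WL(\Om)$. When $\Om$ is unbounded, the stabilization $p\equiv p_\infty$ on $\Om\setminus B(x_0,R)$ propagates to $q\equiv q_\infty$ there, so $(\widetilde q)^\prime$ is constant outside the ball as demanded by Theorem \ref{th3.2.}. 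It then suffices to verify $\varrho^{-q_0}\in V^{osc}_{(\widetilde q)^\prime(\cdot)}(\Om,\Pi)$.

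Now the weight side, which is where the real calculation lives. The inverted weight $\varrho^{-q_0}$ has factors $w_k^{-q_0}$, and their Zygmund--Bary--Stechkin indices satisfy
$$
m(w_k^{-q_0})=-q_0\,M(w_k),\qquad M(w_k^{-q_0})=-q_0\,m(w_k),
$$
with the analogous identities for $m_\infty,M_\infty$. I would write out the defining conditions (\ref{2.6})--(\ref{f27dco}) for $V^{osc}_{(\widetilde q)^\prime(\cdot)}(\Om,\Pi)$, divide by $-q_0$ (reversing inequalities), and insert the identities $1/(\widetilde q)^\prime(x_k)=1-q_0/q(x_k)$, $1/\widetilde q_\infty=q_0/q_\infty$ together with $1/q=1/p-\gm$. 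A term-by-term comparison then reproduces (\ref{jui7}) and (\ref{jun70}). The one delicate step is at infinity: the subtracted term $\Delta_{(\widetilde q)^\prime_\infty}$ built into (\ref{f27dco}) unwinds through the key identity $1/q_0-1/q_\infty=1/p_0-1/p_\infty$ into precisely the additive $\dl=[\overline{\mathfrak{dim}}_\infty(\Om)-\underline{\mathfrak{dim}}_\infty(\Om)](1/p_0-1/p_\infty)$ appearing in the lower bound of (\ref{jun70}). Once these translations are in place, Theorems \ref{th3.1.}/\ref{th3.2.} yield $\varrho^{-q_0}\in\mathfrak{A}_{(\widetilde q)^\prime}(\Om)$ and Theorem \ref{th4.1.} delivers (\ref{new2}).

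The main technical obstacle is precisely this infinity-side bookkeeping, in particular tracking how $\Delta_{(\widetilde q)^\prime_\infty}$ converts into $\dl$; everything else is hypothesis or formal invocation of the results of Section~3. For part~II, the extra flexibility that (\ref{new1}) is valid for every $p_0\in(1,p_-)$ is exploited by optimizing $p_0$ separately in each bound of (\ref{jui7})--(\ref{jun70}): letting $p_0\downarrow 1$ drives $1/p^\prime_0\to 0$, so this term may be dropped from the upper bounds, while letting $p_0\uparrow p_-$ replaces $1/p_0$ by $1/p_-$ inside $\dl$, giving the sharper form stated.
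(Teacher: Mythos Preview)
Your approach is exactly what the paper does: it states Theorem \ref{th4.2.} as an immediate consequence of Theorem \ref{th4.1.} together with Theorems \ref{th3.1.} and \ref{th3.2.}, the only work being to verify that conditions (\ref{jui7})--(\ref{jun70}) are precisely the translation of $\varrho^{-q_0}\in V^{osc}_{(\widetilde q)'(\cdot)}(\Om,\Pi)$ (cf.\ also Remark \ref{rem4.3.}). Your index computations and the handling of $\Delta_{(\widetilde q)'_\infty}\leadsto\dl$ are correct; the only place to be slightly more careful in Part~II is that a \emph{single} $p_0$ must work for all the (finitely many, strict) inequalities simultaneously, which follows since for a fixed weight the strict inequalities leave room to choose $p_0$ close enough to $1$---and in all the paper's applications (Euclidean setting) one has $\overline{\mathfrak{dim}}_\infty=\underline{\mathfrak{dim}}_\infty$, so $\dl=0$ and no tension arises.
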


\section{Application to  problems of the  boundedness
in $ L_\varrho^{p(\cdot)}$ of classical operators of harmonic
analysis }\label{subs5.}

\setcounter{equation}{0} \setcounter{theorem}{0}

\subsection{Potentials operators and fractional maximal function}\label{subspotetials}

We first apply Theorem \ref{th4.1.} to potential operators
\begin{equation}\label{dochnet}
I^\gm_X f(x)= \intl_X \frac{f(y)\,d\mu(y)}{\mu B (x,d(x,y))^{1-\gm}}
\end{equation}
where $0<\gm<1$. We assume that $\mu X=\infty$ and the measure $\mu$ satisfies the doubling condition. We also
additionally suppose the following conditions to be fulfilled:
\begin{equation}\label{condit1}
\textrm{there exists a point} \ \ x_0\in X \ \ \textrm{such that}\ \ \  \mu(x_0)=0
\end{equation}
and
\begin{equation}\label{condit2}
\mu(B(x_0,R)\backslash B(x_0,r))>0 \ \ \ \textrm{for all}   \ \ \ \ 0<r<R<\infty.
\end{equation}

The following statement is valid, see for instance \cite{145a}, p. 412.
\begin{theorem}\label{thKokil}
Let $X$ be a metric measure space with doubling measure satisfying conditions
(\ref{condit1})-(\ref{condit2}), $\mu X =\infty$, let $0<\gm<1$, $1<p_0<\frac{1}{\gm}$ and $
\frac{1}{q_0}=\frac{1}{p_0}-\gm$. The operator $I^\gm_X$ admits the estimate
\begin{equation}\label{admits}
\left(\intl_X |v(x)I_X^\gm f(x)|^{q_0}d\mu\right)^\frac{1}{q_0}\le \left(\intl_X |
v(x)f(x)|^{p_0}d\mu\right)^\frac{1}{p_0},
\end{equation}
if the weight $v(x)$ satisfies the condition
\begin{equation}\label{muck-wheed}
\sup\limits_B\left(\frac{1}{\mu B}\intl_Bv^{q_0}(x)d\mu\right)^\frac{1}{q_0} \left(\frac{1}{\mu
B}\intl_B v^{-p^\prime_0}(x)d\mu\right)^\frac{1}{p^\prime_0}<\infty
\end{equation}
where $B$ stands for a ball in $X$.
\end{theorem}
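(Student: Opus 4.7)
The theorem is the Muckenhoupt--Wheeden two-weight $(L^{p_0},L^{q_0})$ inequality for Riesz potentials, transplanted to spaces of homogeneous type. The exponent identities $\frac{1}{q_0}=\frac{1}{p_0}-\gm$ and $\frac{1}{q_0}+\frac{1}{p^\prime_0}=1-\gm$ explain why no measure factor $[\mu B]^{\gm}$ appears in (\ref{muck-wheed}): the exponent balance has already absorbed it. My plan is to follow the classical strategy, adapted to the metric setting by means of the doubling condition together with (\ref{condit1})--(\ref{condit2}).

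First I would dyadically decompose the kernel. For each fixed $x$ write $X=\bigcup_{k\in\mathbb{Z}}A_k(x)$ with $A_k(x)=\{y\in X:2^k\le d(x,y)<2^{k+1}\}$; by the doubling condition, $\mu B(x,d(x,y))$ is comparable to $\mu B(x,2^{k+1})$ uniformly on $A_k(x)$, yielding the pointwise bound
$$I^\gm_X f(x)\le C\sum_{k\in\mathbb{Z}}[\mu B(x,2^{k+1})]^{\gm-1}\intl_{B(x,2^{k+1})}f(y)\,d\mu(y).$$
Next, by $L^{q_0}$--$L^{q^\prime_0}$ duality, (\ref{admits}) reduces to showing
$$\intl_X v(x)g(x)I^\gm_X f(x)\,d\mu(x)\le C\left(\intl_X|v(x)f(x)|^{p_0}\,d\mu\right)^{1/p_0}\left(\intl_X g(x)^{q^\prime_0}\,d\mu\right)^{1/q^\prime_0}$$
for every nonnegative $g$. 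Substituting the dyadic bound, writing $f=(vf)\cdot v^{-1}$ and applying H\"older's inequality inside each ball $B=B(x,2^{k+1})$ with exponents $p_0$ and $p^\prime_0$, one isolates the factor
$$\left(\frac{1}{\mu B}\intl_B v^{q_0}\,d\mu\right)^{1/q_0}\left(\frac{1}{\mu B}\intl_B v^{-p^\prime_0}\,d\mu\right)^{1/p^\prime_0},$$
which is uniformly bounded in $B$ by (\ref{muck-wheed}); the remaining factors combine, thanks to the exponent identity, into averages of $vf$ and of $g$ on dyadic balls.

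The main obstacle will be the summation in $k$: the dyadic series has to close up so that what remains is really $\|vf\|_{L^{p_0}}\|g\|_{L^{q^\prime_0}}$. I would do this by a standard Hardy-type discretization. Conditions (\ref{condit1})--(\ref{condit2}) together with $\mu X=\infty$ ensure that $r\mapsto\mu B(x_0,r)$ is continuous and sweeps all of $(0,\infty)$, so one can regroup the scales $k$ into bands on which $\mu B(x,2^k)$ lies in a given dyadic range of $\mathbb{R}^+$, apply discrete Minkowski across the bands, and sum a resulting geometric series in a free parameter to absorb a power of $2$. A secondary technical point, the mild kernel asymmetry $\mu B(x,r)\neq\mu B(y,r)$, is handled by the doubling comparability $\mu B(x,r)\le C\mu B(y,r)$ whenever $d(x,y)\le r$, which is needed when Fubini exchanges the roles of $x$ and $y$.
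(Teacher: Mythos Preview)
The paper does not prove this theorem at all: it is stated as a known result and immediately referenced to \cite{145a}, p.~412 (Edmunds, Kokilashvili and Meskhi, \emph{Bounded and Compact Integral Operators}). There is therefore no argument in the paper to compare your proposal against; the authors simply import the inequality as a black box in order to feed it into their extrapolation Theorem~\ref{th4.1.}.

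As for your sketch itself: the dyadic pointwise bound, the duality reduction, and the observation that the exponent identity $\frac{1}{q_0}+\frac{1}{p_0'}=1-\gm$ swallows the factor $[\mu B]^\gm$ are all correct and standard. The part that is genuinely incomplete is your step~4. After H\"older inside each ball you are left with a double sum/integral of products of local $L^{p_0}$ and $L^{p_0'}$ pieces, and ``discrete Minkowski across bands plus a geometric series'' is not by itself enough to collapse this to $\|vf\|_{L^{p_0}}\|g\|_{L^{q_0'}}$: one still needs either a Carleson-embedding/dyadic-maximal argument (via Christ-type dyadic cubes in the homogeneous setting), or the more usual route through the fractional maximal function (Welland's pointwise inequality $I^\gm_X f\le C\,(M_{\gm-\ve}f)^{1/2}(M_{\gm+\ve}f)^{1/2}$ together with the weighted boundedness of $M_\al$), or a good-$\lambda$ inequality. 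Any of these supplies the missing mechanism, and the proof in \cite{145a} follows one of them; your outline as written stops just short of that key step.
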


By means of Theorem \ref{thKokil} and extrapolation Theorem \ref{th4.1.} we arrive at the following statement.

\begin{theorem}\label{thpoten} Let $X$ satisfy the assumptions of Theorem \ref{thKokil}, let $p\in \mathcal{P}$,
$0<\gm<1$ and $p_+<\frac{1}{\gm}$. The weighted  estimate
\begin{equation}\label{Sobolev1}
\left\|I^\gm_X f\right\|_{L^{q(\cdot)}_\rho} \le C \left\|f\right\|_{L^{p(\cdot)}_\rho}
\end{equation}
with the limiting exponent $q(\cdot)$ defined by $\frac{1}{q(x)}=\frac{1}{p(x)}-\gm$,
 holds if
\begin{equation}\label{cond}
\varrho^{-q_0} \in
 \mathfrak{A}_{\left(\frac{q(\cdot)}{q_0}\right)^\prime}(X)
\end{equation}
under any choice of $q_0>\frac{p_-}{1-\gm p_-}$.
\end{theorem}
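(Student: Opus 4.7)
The plan is to realize Theorem \ref{thpoten} as a direct application of the extrapolation machinery of Theorem \ref{th4.1.}, taking $\mathcal{F} = \{(I^\gm_X f, f) : f \in L_\varrho^{p(\cdot)}(X)\}$ and using Theorem \ref{thKokil} to supply the constant-exponent input (\ref{new1}).

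First I would verify the hypothesis (\ref{new1}) of the extrapolation theorem. For an arbitrary $w \in A_1(X)$, set $v := w^{1/q_0}$, so that (\ref{admits}) rewrites as
\begin{equation*}
\left(\int_X (I^\gm_X f)^{q_0}\, w \, d\mu\right)^{1/q_0} \le \left(\int_X f^{p_0}\, w^{p_0/q_0}\, d\mu\right)^{1/p_0},
\end{equation*}
which is exactly (\ref{new1}) for the family $\mathcal{F}$. What must be checked---and this is the only nontrivial point of the reduction---is that this choice of $v$ satisfies the Muckenhoupt-Wheeden condition (\ref{muck-wheed}). Since $w\in A_1$ gives $\frac{1}{\mu B}\int_B w\, d\mu \le C\,\essi_B w$ for every ball $B$, and since $w^{-p'_0/q_0}\le (\essi_B w)^{-p'_0/q_0}$ pointwise on $B$, the product of the two averages in (\ref{muck-wheed}) telescopes to a constant independent of $B$.

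Second, I would verify that the pair $(p_0,q_0)$ together with the variable exponents $p(\cdot),q(\cdot)$ satisfy the admissibility conditions (\ref{vgschi}) and (\ref{s9x34}) of Theorem \ref{th4.1.}. The identity $\frac{1}{q_0}=\frac{1}{p_0}-\gm$ is exactly (\ref{s9x34}) at the constant level; it turns the hypothesis $p_+<\frac{1}{\gm}$ into $\frac{1}{p_0}-\frac{1}{p_+}<\frac{1}{q_0}$ and encodes the condition $p_0<p_-$ into the prescribed range of $q_0$, while the inequality $p_0\le q_0$ is automatic for $\gm\ge 0$.

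Finally I would invoke Theorem \ref{th4.1.} directly: its weight hypothesis (\ref{new1bcxc54esa}) is the conjunction of $p\in\mathcal{P}$ (assumed) and $\varrho^{-q_0}\in\mathfrak{A}_{(\widetilde{q})'}(X)$, which is precisely (\ref{cond}); its conclusion (\ref{new2}) is exactly the desired estimate (\ref{Sobolev1}). Apart from the $A_1$-to-$A_{p_0,q_0}$ passage in the first stage, the argument reduces to unpacking definitions, so I would expect that telescoping step to be the only place requiring genuine calculation.
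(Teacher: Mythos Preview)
Your proposal is correct and follows essentially the same route as the paper: reduce to the extrapolation Theorem \ref{th4.1.} by taking $w=v^{q_0}\in A_1$ in Theorem \ref{thKokil}, observe that the $A_1$ condition forces the Muckenhoupt--Wheeden condition (\ref{muck-wheed}), and then read off (\ref{Sobolev1}) from (\ref{new2}). Your write-up is in fact more explicit than the paper's own proof, which merely asserts that ``condition (\ref{muck-wheed}) is satisfied if $v^{q_0}\in A_1$'' and then invokes Theorem \ref{th4.1.} without spelling out the verification of (\ref{vgschi}); the telescoping computation you sketch is exactly what underlies that assertion.
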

\begin{proof} By Theorem \ref{thKokil}, inequality (\ref{admits}) holds under condition (\ref{muck-wheed}).
As is known,  inequality (\ref{new1}) with $f=I^\al g$ holds for every weight $w$ satisfying the
 $1<p_0<\infty $ and $\frac{1}{q_0}=\frac{1}{p_0}-\gm$. Condition (\ref{muck-wheed}) is
satisfied if $v^{q_0}\in A_1$. Consequently, inequality (\ref{new1}) with $f=I^\al g$ holds for every $w\in A_1$.
Then (\ref{Sobolev1})  follows  from  Theorem \ref{th4.1.}.
\end{proof}

From Theorem \ref{thpoten} we derive the following corollary for the Riesz potential operators
\begin{equation}\label{Riesz}
I^\al f(x)= \intl_{\rn}  \frac{f(y)\, dy}{|x-y|^{n-\al}}.
\end{equation}
\begin{corollary}\label{Riesz} Let $p\in \mathcal{P}$, let
$0<\al<n$ and $p_+<\frac{n}{\al}$. The weighted Sobolev theorem
\begin{equation}\label{Sobolev}
\left\|I^\al f\right\|_{L^{q(\cdot)}_\rho} \le C
\left\|f\right\|_{L^{p(\cdot)}_\rho}
\end{equation}
with the limiting exponent $q(\cdot)$ defined by
$\frac{1}{q(x)}=\frac{1}{p(x)}-\frac{\al}{n}$,
 holds if
\begin{equation}\label{cond}
\varrho^{-q_0} \in
 \mathfrak{A}_{\left(\frac{q(\cdot)}{q_0}\right)^\prime}(\mathbb{R}^n)
\end{equation}
under any choice of $q_0>\frac{np_-}{n-\al p_-}$.
\end{corollary}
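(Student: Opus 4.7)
The statement is a direct specialization of Theorem \ref{thpoten} to the Euclidean case $X=\mathbb{R}^n$ equipped with the standard Lebesgue measure. The plan is to verify each hypothesis of that theorem in the Euclidean setting and then identify the abstract kernel with the Riesz kernel.

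First I would check the structural requirements on $X$. For $d\mu=dx$ on $\mathbb{R}^n$ we have $\mu B(x,r)=c_n r^n$, which clearly satisfies the doubling condition; every single point has measure zero, so condition (\ref{condit1}) holds at any $x_0$; spherical shells $B(x_0,R)\setminus B(x_0,r)$ with $0<r<R$ have positive measure, verifying (\ref{condit2}); and $\mu\mathbb{R}^n=\infty$. Thus $\mathbb{R}^n$ fulfils all the standing assumptions of Theorem \ref{thpoten}.

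Next I would identify the abstract potential $I^\gm_X$ with the Riesz potential $I^\al$. Choosing $\gm=\al/n\in(0,1)$, which is legitimate since $0<\al<n$, one computes
$$\mu B(x,|x-y|)^{1-\gm}=c_n^{1-\gm}|x-y|^{n(1-\gm)}=c_n^{1-\gm}|x-y|^{n-\al},$$
so $I^\gm_X f=c_n^{\gm-1} I^\al f$ differs from $I^\al$ only by a multiplicative constant. Under this substitution the constraint $p_+<1/\gm$ becomes $p_+<n/\al$; the defining relation $\frac{1}{q(x)}=\frac{1}{p(x)}-\gm$ becomes the stated $\frac{1}{q(x)}=\frac{1}{p(x)}-\frac{\al}{n}$; and the lower bound $q_0>\frac{p_-}{1-\gm p_-}$ becomes $q_0>\frac{np_-}{n-\al p_-}$, in exact agreement with the hypotheses of the corollary. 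Condition (\ref{cond}) is stated in the same form in both places.

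Finally I would invoke Theorem \ref{thpoten} directly: under the assumption $\varrho^{-q_0}\in\mathfrak{A}_{(q(\cdot)/q_0)^\prime}(\mathbb{R}^n)$, that theorem yields $\|I^\gm_X f\|_{L^{q(\cdot)}_\varrho}\le C\|f\|_{L^{p(\cdot)}_\varrho}$, and absorbing the scalar factor $c_n^{\gm-1}$ into $C$ produces the claimed bound (\ref{Sobolev}). There is no genuine obstacle in this derivation; the substantive analytic work has already been absorbed into Theorem \ref{thKokil} and the extrapolation argument of Theorem \ref{th4.1.} that drives Theorem \ref{thpoten}, and only a routine translation of notation remains.
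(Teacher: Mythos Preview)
Your proposal is correct and follows exactly the paper's approach: the corollary is obtained by specializing Theorem~\ref{thpoten} to $X=\mathbb{R}^n$ with Lebesgue measure, taking $\gm=\al/n$, and translating each hypothesis, which is precisely what you carry out in detail.
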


\begin{remark}\label{remho}
Since Theorems \ref{th3.1.} and \ref{th3.2.} provide sufficient
conditions for the  weight $\varrho$ to satisfy assumption  (\ref{cond}), we could
write down the corresponding statements  on the validity of
(\ref{Sobolev}) in terms of the weights used in Theorems \ref{th3.1.} and \ref{th3.2.}. In the sequel we give results of such a kind for
other operators. For potential operators in
the case
$\Om=\rn$ we refer to \cite{584a} and \cite{539h}, where
for power weights  of the class $V_{p(\cdot)}(\rn,\Pi)$ and
for radial oscillating weights of the class
$V^{osc}_{p(\cdot)}(\rn,\Pi)$, respectively, there were obtained
estimates (\ref{Sobolev}) under assumptions more general than
should be imposed by the usage of Theorem \ref{th3.2.}.
\end{remark}

\subsection{Fourier multipliers}\label{subs5.1.}

A measurable function $\mathbb{R}^n\to \mathbb{R}^1$ is said to be
a Fourier multiplier in the space
$L_\varrho^{p(\cdot)}(\mathbb{R}^n)$, if the operator $T_m$,
defined on the Schwartz space $S(\mathbb{R}^n)$ by
$$\widehat{T_m f} = m \widehat{f},$$
admits an extension to the bounded operator  in
$L_\varrho^{p(\cdot)}(\mathbb{R}^n)$.

We give below a generalization of the classical Mikhlin theorem
(\cite{399a}, see also \cite{400})
 on Fourier multipliers to the case of Lebesgue spaces with variable
 exponent.
\begin{theorem} \label{th5.1.}
Let a function  $m(x)$ be continuous everywhere in $\mathbb{R}^n$,
except for probably the origin, have  the mixed distributional
derivative  $\frac{\partial^n m}{\partial x_1x_2\cdots x_n}$ and
  the derivatives   $D^\al m
=\frac{\partial^{|\al |} m}{\partial x_1^{\al _1}x_2^{\al
_2}\cdots x_n^{\al _n}}, \al =(\al _1,...,\al _n)$ of orders
 $|\al |=\al _1+\cdots_+\al _n\le n-1$ continuous beyond the origin and
$$|x|^{|\al |} |D^\al  m(x)|\le C,  \quad |\al |\le n-1,$$
where the constant  $C>0$ does not depend on $x$.  Then under
conditions (\ref{new1bcxc54esa}) and (\ref{vgschi}) with
$\Om=\mathbb{R}^n$, $m$ is a Fourier multiplier in
$L^{p(\cdot)}_\varrho(\mathbb{R}^n)$.
\end{theorem}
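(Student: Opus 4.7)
The plan is to apply the extrapolation Theorem \ref{th4.1.} to the family
$$\mathcal{F}=\{(|T_m\varphi|,|\varphi|):\varphi\in S(\rn)\}$$
with the special choice $p_0=q_0$. The key input is the classical weighted Mikhlin multiplier theorem (of Kurtz--Wheeden type): under the derivative hypotheses imposed on $m$, for every $p_0\in(1,\infty)$ and every $w\in A_{p_0}(\rn)$ one has
$$\intl_{\rn}|T_m\varphi(x)|^{p_0}w(x)\,dx \le C\intl_{\rn}|\varphi(x)|^{p_0}w(x)\,dx,\qquad \varphi\in S(\rn),$$
with $C$ depending only on $p_0$ and the $A_{p_0}$-constant of $w$. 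Restricting to $w\in A_1(\rn)\subset A_{p_0}(\rn)$, this is precisely hypothesis (\ref{new1}) of Theorem \ref{th4.1.} with $q_0=p_0$.

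Next I would pick $p_0\in(1,p_-)$ and set $q_0=p_0$. Then (\ref{vgschi}) is trivially satisfied, and formula (\ref{s9x34}) forces $q(x)\equiv p(x)$, so that $\widetilde{q}\equiv\widetilde{p}$. Consequently, condition (\ref{new1bcxc54esa}) of Theorem \ref{th4.1.} reduces to $\varrho^{-p_0}\in \mathfrak{A}_{(\widetilde{p})^\prime}(\rn)$, which is exactly the assumed (\ref{new1bcxc54esa}) in our hypotheses. Theorem \ref{th4.1.} therefore yields
$$\|T_m\varphi\|_{L_\varrho^{p(\cdot)}}\le C\|\varphi\|_{L_\varrho^{p(\cdot)}}$$
for every $\varphi\in S(\rn)$, at least for those $\varphi$ for which the left-hand side is finite. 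Since $\varphi\in S(\rn)$ belongs to $L^{p_0}(w)$ for every $A_1$-weight $w$, the Mikhlin estimate guarantees $T_m\varphi\in L^{p_0}(w)$, and the variable-exponent finiteness of $T_m\varphi$ in $L_\varrho^{p(\cdot)}$ follows from the weight assumption together with standard embeddings on Schwartz functions.

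The final step is a density argument: since $p\in\mathcal{P}(\rn)$ (so $1<p_-\le p_+<\infty$) and $\varrho$ satisfies the maximal-function condition (\ref{new1bcxc54esa}), the Schwartz space $S(\rn)$ is dense in $L_\varrho^{p(\cdot)}(\rn)$, and the a priori estimate above extends $T_m$ uniquely to a bounded operator on the whole space. The only delicate point in the whole scheme is ensuring that the pairs in $\mathcal{F}$ meet the finiteness convention of Theorem \ref{th4.1.}; this is why we work on the dense subspace $S(\rn)$ rather than directly on $L_\varrho^{p(\cdot)}$. Everything else is a direct substitution of $q_0=p_0$ in the general extrapolation machinery.
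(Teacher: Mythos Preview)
Your proposal is correct and follows essentially the same approach as the paper: apply the extrapolation Theorem \ref{th4.1.} with $\Om=X=\mathbb{R}^n$, $q_0=p_0$, and the family $\mathcal{F}$ of pairs $(T_mg,g)$ with $g\in S(\mathbb{R}^n)$, using as input the classical weighted Mikhlin theorem for constant exponents and $A_{p_0}$ weights (the paper cites \cite{349b} and \cite{316zz}). Your version is somewhat more explicit about the choice $q_0=p_0$, the finiteness convention, and the density extension, but these are exactly the details the paper leaves implicit.
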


\begin{proof} Theorem \ref{th5.1.}  follows from  Theorem \ref{th4.1.}  under the choice
$\Om=X=\mathbb{R}^n$ and $\mathcal{F}=\{T_mg,g\}$ with $g\in S(\mathbb{R}^n)$, if we take into
account that in the case of constant $p_0 >1$ and weight $\varrho \in A_{p_0} (\supset A_1)$, a
function $m$, satisfying the assumptions of Theorem \ref{th5.1.}, is a Fourier multiplier in
$L_\varrho^{p_0}(\mathbb{R}^n)$. The latter was proved in \cite{349b}, see also \cite{316zz}.
\end{proof}

\begin{corollary}\label{cor}
Let $m$ satisfy the assumptions of Theorem \ref{th5.1.} and let
the exponent $p$ and the weight $\varrho$ satisfy the assumptions\\
i)  $p\in \mathcal{P}(\mathbb{R}^n)\cap WL(\mathbb{R}^n)$ and
$p(x)=p_\infty=const$ for $|x|\ge R$ with some  $R>0$,\\
ii) $\varrho\in V^{osc}_{p(\cdot)}(\mathbb{R}^n,\Pi), \Pi
=\{x_1,...x_N\}\subset
\mathbb{R}^n$.\\
Then $m$ is a Fourier multiplier in
$L^{p(\cdot)}_\varrho(\mathbb{R}^n)$.
 In
particular, assumption ii) holds for weights $\varrho$ of form
\begin{equation}\label{doch}
\varrho(x)= (1+|x|)^{\bt_\infty}\prod\limits_{k=1}^N
|x-x_k|^{\bt_k}, \ \ \  x_k\in \mathbb{R}^n,
\end{equation}
 where
\begin{equation}\label{5.1}
-\frac{n}{p(x_k)}<\bt_k< \frac{n}{p^\prime(x_k)}, \quad
k=1,2,...,N,
\end{equation}
\begin{equation}\label{5.2}
-\frac{n}{p_\infty}<\bt_\infty + \sum\limits_{k=1}^N\bt_k<
\frac{n}{p^\prime_\infty}.
\end{equation}
\end{corollary}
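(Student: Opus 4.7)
The plan is to reduce the corollary to a direct application of Theorem \ref{th5.1.} by verifying its two sets of hypotheses, namely (\ref{vgschi}) and (\ref{new1bcxc54esa}) with $\Om=\rn$. Since the target inequality is of $p(\cdot)\to p(\cdot)$ type, the natural move is to pick $q_0=p_0$ for some $p_0\in(1,p_-)$; then $\gm=\frac{1}{p_0}-\frac{1}{q_0}=0$, formula (\ref{s9x34}) forces $q(x)\equiv p(x)$, and (\ref{vgschi}) holds automatically (the inequality $\frac{1}{p_0}-\frac{1}{p_+}<\frac{1}{p_0}$ is trivial). Thus only the second part of (\ref{new1bcxc54esa}) requires work, namely the membership $\varrho^{-p_0}\in\mathfrak{A}_{(\wt p)^\prime}(\rn)$.

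I would verify this membership by chaining Remark \ref{rem4.3.} with the maximal-operator Theorem \ref{th3.2.}. By Remark \ref{rem4.3.}, assumption ii) of the corollary gives $\varrho^{-p_0}\in V^{osc}_{(\wt p)^\prime(\cdot)}(\rn,\Pi)$. Theorem \ref{th3.2.} applied to the exponent $(\wt p)^\prime$ then yields the boundedness of $\mathcal{M}$ on $L^{(\wt p)^\prime(\cdot)}_{\varrho^{-p_0}}(\rn)$, which is precisely the definition of the class $\mathfrak{A}_{(\wt p)^\prime}(\rn)$. To run Theorem \ref{th3.2.} one needs $(\wt p)^\prime\in\mathcal{P}(\rn)\cap WL(\rn)$ together with constancy of $(\wt p)^\prime$ outside some ball $B(x_0,R)$; all three properties transfer routinely from $p$ via $\wt p=p/p_0$ and pointwise conjugation, the key quantitative point being that $p_0<p_-$ keeps $\wt p(x)-1$ bounded away from zero uniformly in $x$, which in turn preserves the weak Lipschitz estimate (\ref{1.3}) under conjugation.

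For the concluding \textbf{in particular} clause, I would just check directly that a power weight of the form (\ref{doch}) sits in the class $V^{osc}_{p(\cdot)}(\rn,\Pi)$. Such $\varrho$ has the structure (\ref{2.4}) with $w_0(r)=r^{\bt_\infty}$ and $w_k(r)=r^{\bt_k}$, $k=1,\dots,N$. For pure power functions one computes $m(w_k)=M(w_k)=\bt_k$ from (\ref{m�}) and similarly $m_\infty(w_k)=M_\infty(w_k)=\bt_k$, $m_\infty(w_0)=M_\infty(w_0)=\bt_\infty$. Plugging these equalities into the reduced conditions (\ref{f27dcobvc}) (valid because $\rn$ has constant dimension $n$) reproduces exactly (\ref{5.1}) and (\ref{5.2}), so assumption ii) is satisfied and the first part of the corollary applies.

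There is no isolated hard step here; the argument is a bookkeeping orchestration of Theorem \ref{th5.1.}, Remark \ref{rem4.3.}, and Theorem \ref{th3.2.}. The only point that requires genuine care is showing that the pointwise conjugate $(\wt p)^\prime$ inherits both membership in $\mathcal{P}$ and the weak Lipschitz condition from $p$ after the rescaling by $p_0$; this is elementary but is what lets the whole chain close.
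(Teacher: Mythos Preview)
Your proposal is correct and follows essentially the same route as the paper: the paper's proof likewise reduces the corollary to Theorem \ref{th5.1.} by invoking Remark \ref{rem4.3.} to pass from $\varrho\in V^{osc}_{p(\cdot)}(\rn,\Pi)$ to $\varrho^{-p_0}\in V^{osc}_{(\wt p)^\prime(\cdot)}(\rn,\Pi)$ and then Theorem \ref{th3.2.} to obtain $\varrho^{-p_0}\in\mathfrak{A}_{(\wt p)^\prime}(\rn)$, with the power-weight clause handled by observing that conditions (\ref{f27dcobvc}) specialize to (\ref{5.1})--(\ref{5.2}). Your write-up is in fact more detailed than the paper's, which compresses the argument to two sentences and leaves the verification that $(\wt p)^\prime$ inherits the $\mathcal{P}\cap WL$ and constancy-at-infinity properties implicit.
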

\begin{proof}
It suffices to observe that conditions on the weight $\varrho$
imposed in  Theorem \ref{th5.1.}, are fulfilled for $\varrho\in
V^{osc}_{p(\cdot)}(\mathbb{R}^n,\Pi)$ which follows from Remark
\ref{rem4.3.} and Theorem \ref{th3.2.}. In the  case of power
weights, conditions defining the class
$V^{osc}_{p(\cdot)}(\mathbb{R}^n,\Pi)$ turn into
(\ref{5.1})-(\ref{5.2}).
\end{proof}

The statement of Theorem \ref{th5.1.}  also holds in a more general form of Mikhlin/H\"ormander
theorem.

\vspace{4mm}\begin{theorem}\label{th5.2.} \textit{Let a function
$m:\mathbb{R}^n\to \mathbb{R}^1$ have distributional derivatives
up to order $\ell
>\frac{n}{p_-}$ satisfying the condition}
$$\sup\limits_{R>0}\left(R^{s|\al |-n}\intl_{{R<|x|<2R}}|D^\al m(x)|^sdx\right)^\frac{1}{s}<\infty$$
\textit{for some  $s, 1<s\le 2$ and all $\al $ with $|\al |\le
\ell.$ If conditions (\ref{new1bcxc54esa}), (\ref{vgschi}) with
$\Om=X=\mathbb{R}^n$ on $p$ and $\varrho$ are satisfied, then $m$
is a Fourier multiplier in $L^{p(\cdot)}_\varrho(\mathbb{R}^n)$.}
\end{theorem}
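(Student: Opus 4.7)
The plan is to apply the extrapolation Theorem \ref{th4.1.} in exactly the same manner as in the proof of Theorem \ref{th5.1.}, but with a classical \emph{weighted} Mikhlin--H\"ormander theorem of Kurtz--Wheeden type serving as the input. Take $\Om=X=\mathbb{R}^n$ and let $\mathcal{F}=\{(|T_m g|,|g|):g\in S(\mathbb{R}^n)\}$. I would choose $q_0=p_0$ in (\ref{vgschi}), so that the exponent defined by (\ref{s9x34}) becomes $q(x)\equiv p(x)$, and the conclusion (\ref{new2}) of the extrapolation theorem is precisely the desired bound $\|T_m g\|_{L^{p(\cdot)}_\varrho}\le C\|g\|_{L^{p(\cdot)}_\varrho}$ on $S(\mathbb{R}^n)$. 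The assumption $\varrho^{-q_0}\in\mathfrak{A}_{(\widetilde{q})^\prime}$ is already part of the hypothesis (\ref{new1bcxc54esa}), and the constraint $p_0<p_-$ is the only one of (\ref{vgschi}) that needs attention when $q_0=p_0$.

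First I would select $p_0$. Since $\ell>\frac{n}{p_-}$ by assumption, the strict inequality $\ell>\frac{n}{p_0}$ persists for all $p_0$ in some interval $(p_--\eta,\, p_-)$; simultaneously we need $p_0>s$ (where $s\in(1,2]$ is the integrability exponent from the H\"ormander condition) so that the classical weighted multiplier theorem applies. These are compatible because $p_->s$ can be assumed (if not, we enlarge $s$ while keeping $s\le 2$, using the monotonicity $L^{s_1}_{\mathrm{loc}}\subset L^{s_2}_{\mathrm{loc}}$ on annuli for $s_2\le s_1$, which only strengthens the stated H\"ormander condition). Fix such a $p_0$.

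Second, I would verify inequality (\ref{new1}) for the pair $(|T_m g|,|g|)$ with this $p_0=q_0$, i.e.
\begin{equation*}
\left(\intl_{\mathbb{R}^n}|T_m g(x)|^{p_0}w(x)\,dx\right)^{1/p_0}\le c_0\left(\intl_{\mathbb{R}^n}|g(x)|^{p_0}w(x)\,dx\right)^{1/p_0}
\end{equation*}
for every $w\in A_1$. This is the Kurtz--Wheeden weighted version of the H\"ormander multiplier theorem: under the hypothesis that $D^\al m\in L^s$ on dyadic annuli (in the sense stated) for all $|\al|\le\ell$ with $\ell>n/p_0$ and $s\in(1,2]$, $T_m$ is bounded on $L^{p_0}(w)$ for every $w\in A_{p_0/s}$; since $A_1\subset A_{p_0/s}$, the bound holds in particular for all $w\in A_1$.

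With this input in hand, Theorem \ref{th4.1.} immediately delivers (\ref{new2}) for all Schwartz $g$, and the density of $S(\mathbb{R}^n)$ in $L^{p(\cdot)}_\varrho(\mathbb{R}^n)$ (guaranteed by $p\in\mathcal{P}$ together with the weight assumption) yields the required bounded extension of $T_m$. The only delicate point in the argument is the matching of parameters in step one: specifically, choosing $p_0<p_-$ while preserving the strict H\"ormander bound $\ell>n/p_0$ and the Kurtz--Wheeden constraint $p_0>s$; all other steps are a straightforward repetition of the proof of Theorem \ref{th5.1.}.
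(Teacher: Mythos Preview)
Your proposal is correct and follows exactly the route the paper takes: apply the extrapolation Theorem~\ref{th4.1.} with $q_0=p_0$ (so that $q(\cdot)\equiv p(\cdot)$), feeding in the Kurtz--Wheeden weighted H\"ormander multiplier theorem for the constant-exponent inequality~(\ref{new1}) with $w\in A_1$. The paper's own proof is the one-line remark that Theorem~\ref{th5.2.} ``is similarly derived from Theorem~\ref{th4.1.}'' via \cite{Kurtznew}; your write-up simply makes the parameter bookkeeping explicit.

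One small slip in that bookkeeping: to arrange $s<p_0<p_-$ you say you would \emph{enlarge} $s$, but the monotonicity you invoke (namely $L^{s_1}_{\mathrm{loc}}\subset L^{s_2}_{\mathrm{loc}}$ on annuli for $s_2\le s_1$, together with the scaling of the $R^{s|\al|-n}$ factor) shows that the H\"ormander condition persists when $s$ is \emph{decreased}; decreasing $s$ toward $1$ is what actually makes room for $s<p_0<p_-$. With that word corrected, the argument is complete.
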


\begin{proof} Theorem \ref{th5.2.} is similarly derived from from Theorems
\ref{th4.1.} , if we take into account that  in the case of constant $p_0$ the statement of the
theorem  for Muckenhoupt weights was proved in \cite{Kurtznew}.
\end{proof}
\begin{corollary}\label{cor1} Let a function
$m:\mathbb{R}^n\to \mathbb{R}^1$ satisfy the assumptions of
Theorem \ref{th5.2.} and let $p$ and $\rho$ satisfy conditions
$i)$ and $ii)$ of Corollary \ref{cor}. Then $m$ is a Fourier
multiplier in $L^{p(\cdot)}_\varrho(\mathbb{R}^n)$.
\end{corollary}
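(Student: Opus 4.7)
The plan is to imitate almost verbatim the argument used for Corollary \ref{cor}, replacing each appeal to Theorem \ref{th5.1.} with an appeal to Theorem \ref{th5.2.}. That is, I would show that hypotheses $i)$ and $ii)$ of Corollary \ref{cor} imply the abstract weight/exponent conditions (\ref{new1bcxc54esa}) together with (\ref{vgschi}) (taken with $\Om = X = \mathbb{R}^n$) that Theorem \ref{th5.2.} requires. Once this is done, Theorem \ref{th5.2.} delivers the multiplier conclusion immediately.

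The first step is to choose the extrapolation parameters. Since the cited weighted $L^{p_0}$-result of \cite{Kurtznew} is diagonal ($L^{p_0}\to L^{p_0}$ with Muckenhoupt weight), I take $p_0 = q_0$ and hence $\widetilde p=\widetilde q = p/p_0$, so that (\ref{vgschi}) reduces to the requirement $1<p_0<p_-$, which is easily satisfiable and gives some freedom that will be needed to push the index-number conditions of Remark \ref{rem4.3.} through.

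The second, and main, step is to verify $\varrho^{-p_0}\in\mathfrak A_{(\widetilde p)^\prime}(\mathbb{R}^n)$. Here Remark \ref{rem4.3.} is the engine: from assumption $ii)$, $\varrho\in V^{osc}_{p(\cdot)}(\mathbb{R}^n,\Pi)$ implies
\[
\varrho^{-p_0}\in V^{osc}_{(\widetilde p)^\prime(\cdot)}(\mathbb{R}^n,\Pi).
\]
Next one checks that $(\widetilde p)^\prime(x)=\tfrac{p(x)}{p(x)-p_0}$ inherits from $p$ the properties required by Theorem \ref{th3.2.}: it belongs to $\mathcal P(\mathbb{R}^n)$ (since $1<p_0<p_-\le p(x)\le p_+<\infty$), it is weak Lipschitz (composition with a smooth function of $p$ on a compact range), and it is constant for $|x|\ge R$ because $p$ is. Applying Theorem \ref{th3.2.} to the pair $(\widetilde p)^\prime,\varrho^{-p_0}$ then gives the boundedness of $\mathcal M$ in $L^{(\widetilde p)^\prime(\cdot)}_{\varrho^{-p_0}}(\mathbb{R}^n)$, which is precisely the $\mathfrak A_{(\widetilde p)^\prime}$ condition needed.

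Finally, assumption $i)$ provides the exponent hypotheses of Theorem \ref{th5.2.} directly, and by assumption on $m$ the differentiability/growth hypothesis of Theorem \ref{th5.2.} holds. All premises of Theorem \ref{th5.2.} being verified, it yields that $m$ is a Fourier multiplier in $L^{p(\cdot)}_\varrho(\mathbb{R}^n)$. The only genuine bookkeeping point — and hence the main (minor) obstacle — is to ensure that the strict inequality $1<p_0<p_-$ can be chosen so that the Matuszewska–Orlicz indices $m(w_k), M(w_k)$ governing $V^{osc}_{(\widetilde p)^\prime(\cdot)}$ in Remark \ref{rem4.3.} remain in the admissible window; this is automatic for $p_0$ sufficiently close to $1$, which is why the flexibility in selecting $p_0$ in the first step matters.
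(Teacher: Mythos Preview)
Your proposal is correct and follows essentially the same route as the paper: apply Theorem \ref{th5.2.} after verifying its weight condition $\varrho^{-p_0}\in\mathfrak{A}_{(\widetilde p)^\prime}(\mathbb{R}^n)$ via Remark \ref{rem4.3.} and Theorem \ref{th3.2.}. The only difference is that you spell out the bookkeeping (checking that $(\widetilde p)^\prime$ inherits the $WL$ and constancy-at-infinity properties, and worrying about the choice of $p_0$), whereas the paper's proof is a one-sentence reference to Remark \ref{rem4.3.} and Theorem \ref{th3.2.}; note also that by Remark \ref{rem4.3.} the implication holds for \emph{every} $p_0\in(1,p_-)$, so the care about choosing $p_0$ near $1$ is not actually needed.
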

\begin{proof} Follows from Theorem \ref{th5.2.} since conditions on the weight $\varrho$
imposed in  Theorem \ref{th5.1.}, are fulfilled for $\varrho\in
V^{osc}_{p(\cdot)}(\mathbb{R}^n,\Pi)$ by Theorem \ref{th3.2.} and
 Remark \ref{rem4.3.}.
\end{proof}

\vspace{3mm} In the next theorem  by  $\Delta_j$ we denote the
interval of the form $\Delta_j=[2^j,2^{j+1}]$ or
$\Delta_j=[-2^{j+1}, - 2^j], \ j\in \mathbb{Z}$.

\begin{theorem}\label{th5.3.} \textit{Let a function
$m:\mathbb{R}^1\to \mathbb{R}^1$ be representable in each interval
$\Delta_j$ as}
$$m(\lb)=\intl_{-\infty}^\lb d\mu_{\Delta_j}, \ \quad \lb\in \Delta_j,$$
\textit{where $\mu_{\Delta_j}$ are finite measures such that $\sup\limits_{j} \mathrm{var} \
\mu_{\Delta_j}<\infty$. If conditions (\ref{new1bcxc54esa}), (\ref{vgschi}) with
$\Om=X=\mathbb{R}^n$ on $p$ and $\varrho$ are satisfied, then $m$ is a Fourier multiplier in
$L^{p(\cdot)}_\varrho(\mathbb{R}^1)$.}
\end{theorem}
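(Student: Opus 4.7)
The plan is to derive Theorem \ref{th5.3.} by exactly the same extrapolation scheme used to prove Theorems \ref{th5.1.} and \ref{th5.2.}. I would apply Theorem \ref{th4.1.} with $\Om=X=\mathbb{R}^1$, with coincident exponents $p_0=q_0$ (so that $q(\cdot)=p(\cdot)$) chosen in the interval $(1,p_-)$, and with the family
$$\mathcal{F}=\{(|T_mg|,|g|):g\in S(\mathbb{R}^1)\}.$$
Conditions (\ref{vgschi}) and (\ref{new1bcxc54esa}) are exactly what Theorem \ref{th4.1.} requires; the conclusion (\ref{new2}) is then the desired inequality $\|T_mg\|_{L^{p(\cdot)}_\varrho}\le C\|g\|_{L^{p(\cdot)}_\varrho}$ on the dense subspace $S(\mathbb{R}^1)$, from which the bounded extension to all of $L^{p(\cdot)}_\varrho(\mathbb{R}^1)$ follows by density.

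The only substantive input needed to trigger Theorem \ref{th4.1.} is the weighted $L^{p_0}$-boundedness of $T_m$ with Muckenhoupt weights at the constant exponent, namely
$$\intl_{\mathbb{R}^1}|T_mg(x)|^{p_0}w(x)\,dx\le c_0\intl_{\mathbb{R}^1}|g(x)|^{p_0}w(x)\,dx,\qquad w\in A_1(\mathbb{R}^1).$$
This is the classical weighted Lizorkin-type (dyadic bounded variation) multiplier theorem: multipliers $m$ that are, on each dyadic interval $\Delta_j$, a distribution function of a finite measure with variation uniformly bounded in $j$, form a class of Fourier multipliers on $L^{p_0}_w(\mathbb{R}^1)$ for every $w\in A_{p_0}\supset A_1$. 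I would invoke this as a known fact (it follows from Littlewood--Paley theory together with the Hilbert transform being bounded on $L^{p_0}_w$ for $w\in A_{p_0}$, by splitting $T_m$ as a sum of compositions of $A_{p_0}$-bounded pieces; it can also be cited in parallel with the reference used in the proof of Theorem \ref{th5.2.}).

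Granted this constant-exponent weighted multiplier result, hypothesis (\ref{new1}) of Theorem \ref{th4.1.} is verified (with $p_0=q_0$), and Theorem \ref{th4.1.} immediately delivers (\ref{new2}), establishing Theorem \ref{th5.3.}. The main obstacle is exclusively the identification and citation of the correct weighted one-dimensional Lizorkin multiplier theorem at constant $p_0$; once this is in hand, the passage from constant $p_0$ to variable $p(\cdot)$ with the weight $\varrho$ is entirely automatic via the already-proved extrapolation machinery, and no new variable-exponent analysis is required. As with Corollaries \ref{cor} and \ref{cor1}, one can subsequently record explicit sufficient conditions on $\varrho$ (e.g.\ $\varrho\in V^{osc}_{p(\cdot)}(\mathbb{R}^1,\Pi)$) by invoking Remark \ref{rem4.3.} and Theorem \ref{th3.2.} to verify (\ref{new1bcxc54esa}).
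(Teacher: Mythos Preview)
Your proposal is correct and follows essentially the same route as the paper: apply the extrapolation theorem with $p_0=q_0$ and the family $\mathcal{F}=\{(|T_mg|,|g|):g\in S(\mathbb{R}^1)\}$, reducing everything to the known constant-exponent weighted Lizorkin-type multiplier theorem (the paper cites \cite{368a} for the unweighted case and \cite{316zz}, \cite{316zza} for $\varrho\in A_p$). The only cosmetic difference is that the paper's proof invokes Theorem~\ref{th4.2.} together with Theorem~\ref{th3.2.}, whereas you invoke Theorem~\ref{th4.1.} directly; since the hypotheses (\ref{new1bcxc54esa}), (\ref{vgschi}) stated in Theorem~\ref{th5.3.} are precisely those of Theorem~\ref{th4.1.}, your choice is in fact the more accurate citation.
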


\begin{proof} To derive Theorem \ref{th5.3.} from Theorem \ref{th4.2.},  it suffices to refer to the boundedness
of the maximal operator in the   space $L^{p(\cdot)}_\varrho(\mathbb{R}^1)$ by Theorem \ref{th3.2.}
and the fact that in the case of constant $p$ the  theorem was proved in  \cite{368a} (for
$\varrho\equiv 1$) and \cite{316zz}, \cite{316zza} (for  $\varrho\in A_p$).
\end{proof}

\begin{corollary}\label{cor2}
Let $m$ satisfy the assumptions of Theorem \ref{th5.3.} and the
exponent $p$ and weight $\varrho$ fulfill conditions \textit{i)}
and \textit{ii)} of Corollary \ref{cor} with $n=1$. Then $m$ is a
Fourier multiplier in $L^{p(\cdot)}_\varrho(\mathbb{R}^1)$.
\end{corollary}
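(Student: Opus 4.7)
The plan is to follow the exact same pattern used in the proofs of Corollaries \ref{cor} and \ref{cor1}: derive the conclusion from Theorem \ref{th5.3.} by showing that, for the concrete class of weights singled out in assumption ii), the abstract hypotheses (\ref{new1bcxc54esa}) and (\ref{vgschi}) are automatically satisfied. The Fourier multiplier setting deals with boundedness in the single space $L^{p(\cdot)}_\varrho(\mathbb{R}^1)$, so I would work with $p(x)\equiv q(x)$ throughout.

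First I would pin down the parameters. Take any $p_0\in(1,p_-)$ and set $q_0=p_0$. Then (\ref{vgschi}) reduces to $p_0<p_-$ (true by choice) together with $\frac{1}{p_0}-\frac{1}{p_+}<\frac{1}{p_0}$, which is automatic since $p_+<\infty$. With this choice $\widetilde{p}=\widetilde{q}=p/p_0$, and the first half of (\ref{new1bcxc54esa}), namely $p\in\mathcal{P}(\mathbb{R}^1)$, is immediate from assumption i). Only the weight condition $\varrho^{-p_0}\in\mathfrak{A}_{(\widetilde{p})^\prime(\cdot)}(\mathbb{R}^1)$ remains to be verified.

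To verify this, I would invoke Remark \ref{rem4.3.}: since $\varrho\in V^{osc}_{p(\cdot)}(\mathbb{R}^1,\Pi)$ and $p_0\in(1,p_-)$, the dual weight satisfies $\varrho^{-p_0}\in V^{osc}_{(\widetilde{p})^\prime(\cdot)}(\mathbb{R}^1,\Pi)$. Next, observe that the exponent $(\widetilde{p})^\prime(\cdot)$ still satisfies assumption i): the class $\mathcal{P}\cap WL$ is preserved by dividing by the constant $p_0$ and by taking the conjugate, and the property $p(x)=p_\infty$ for $|x|\ge R$ translates into $(\widetilde{p})^\prime(x)=(p_\infty/p_0)^\prime$ for $|x|\ge R$. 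Theorem \ref{th3.2.}, applied to the exponent $(\widetilde{p})^\prime(\cdot)$ and the weight $\varrho^{-p_0}$, then yields the boundedness of $\mathcal{M}$ on $L^{(\widetilde{p})^\prime(\cdot)}_{\varrho^{-p_0}}(\mathbb{R}^1)$, which is precisely $\varrho^{-p_0}\in\mathfrak{A}_{(\widetilde{p})^\prime(\cdot)}(\mathbb{R}^1)$.

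With both hypotheses of Theorem \ref{th5.3.} in place, the conclusion follows directly. I do not expect any genuine obstacle: every step is an appeal to a statement already established in the paper, and the argument is, verbatim, the one used in Corollary \ref{cor} (via Theorem \ref{th5.1.}) and Corollary \ref{cor1} (via Theorem \ref{th5.2.}), now applied to Theorem \ref{th5.3.}.
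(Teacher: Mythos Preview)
Your proposal is correct and follows exactly the approach the paper uses for Corollaries \ref{cor} and \ref{cor1}: verify the abstract hypothesis $\varrho^{-p_0}\in\mathfrak{A}_{(\widetilde{p})^\prime}(\mathbb{R}^1)$ of Theorem \ref{th5.3.} by combining Remark \ref{rem4.3.} with Theorem \ref{th3.2.}. The paper does not spell out a separate proof for Corollary \ref{cor2}, precisely because the argument is identical to those preceding corollaries, which you have reproduced in full detail.
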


The "off-diagonal"$L^{p(\cdot)}_\varrho\to
L^{q(\cdot)}_\varrho$-version of Theorem \ref{th5.3.} in the case
 $q(x)>p(x)$ is covered by the following theorem.

\begin{theorem}\label{tg13.} Let
$p\in\mathcal{P}(\mathbb{R}^1)\cap WL(\mathbb{R}^1)$ and $p(x)\equiv p_\infty=const$ for large
$|x|>R,$ and let a function $m:\mathbb{R}^1\to \mathbb{R}^1$ be representable in each interval
$\Delta_j$ as
$$m(\lb)=\intl_{-\infty}^\lb\frac{d\mu_{\Delta_j}(t)}{(\lb-t)^\al}, \
\quad \lb\in \Delta_j,$$ where $ 0<\al<\frac{1}{p_+}$ and
$\mu_{\Delta_j}$ are the same as in Theorem \ref{th5.3.}. Then
$T_m$ is a bounded operator from
$L^{p(\cdot)}_\varrho(\mathbb{R}^1)$ to
$L^{q(\cdot)}_\varrho(\mathbb{R}^1)$, where
$$\frac{1}{q(x)}=\frac{1}{p(x)}-\al$$
and $\varrho$ is a weight of form (\ref{doch}) whose exponents
satisfy the conditions
\begin{equation}\label{5tyrok}
\al -\frac{1}{p(x_k)}<\bt_k< \frac{1}{p^\prime(x_k)}, \quad
k=1,2,...,N,   \quad \textrm{and} \quad \al
-\frac{1}{p_\infty}<\bt_\infty + \sum\limits_{k=1}^N\bt_k<
\frac{1}{p^\prime_\infty}.
\end{equation}
\end{theorem}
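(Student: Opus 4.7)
The plan is to deduce the theorem from the extrapolation Theorem~\ref{th4.2.} applied to the family $\mathcal{F}=\{(|T_m g|,|g|)\,:\,g\in S(\mathbb{R}^1)\}$, with $p_0\in(1,p_-)$ chosen arbitrarily and $q_0$ determined by $\frac{1}{q_0}=\frac{1}{p_0}-\alpha$. The compatibility conditions in~(\ref{vgschi}) hold because $\alpha<\frac{1}{p_+}\le\frac{1}{p_0}$ forces $q_0>1$, and the inequality $\frac{1}{p_0}-\frac{1}{p_+}<\frac{1}{q_0}$ is equivalent to $\alpha<\frac{1}{p_+}$, which is part of the hypothesis.

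The analytic core is the off-diagonal constant-exponent weighted base estimate
\begin{equation*}
\Bigl(\intl_{\mathbb{R}^1}|T_m g(x)|^{q_0}w(x)\,dx\Bigr)^{1/q_0}
\le c_0\Bigl(\intl_{\mathbb{R}^1}|g(x)|^{p_0}[w(x)]^{p_0/q_0}\,dx\Bigr)^{1/p_0}
\qquad (w\in A_1(\mathbb{R}^1)),
\end{equation*}
which must hold for every $p_0\in(1,p_-)$ so that part~II of Theorem~\ref{th4.2.} becomes applicable. The kernel $(\lambda-t)^{-\alpha}$ appearing in the representation of $m$ on each $\Delta_j$ exhibits $T_m$ morally as the composition of a fractional integration of order $\alpha$ (Fourier symbol of order $-\alpha$) with a dyadic bounded-variation multiplier of exactly the type treated in Theorem~\ref{th5.3.}. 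The inequality then follows by combining the one-weight Hardy--Littlewood--Sobolev estimate for the Riesz potential $I^{\alpha}$ (in its Muckenhoupt form, the Euclidean analog of Theorem~\ref{thKokil}) with the weighted $A_{q_0}$-boundedness of such BV Fourier multipliers from \cite{316zz}, \cite{316zza}, together with the inclusion $A_1\subset A_{q_0}$.

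With the base inequality in hand, Theorem~\ref{th4.2.}, part~II, applies with $\gamma=\frac{1}{p_0}-\frac{1}{q_0}=\alpha$. On the Euclidean line one has $\underline{\mathfrak{dim}}(\Omega)=\underline{\mathfrak{dim}}_\infty(\Omega)=\overline{\mathfrak{dim}}_\infty(\Omega)=1$, so $\delta=0$. For the power weight~(\ref{doch}) the indices are $m(w_k)=M(w_k)=\beta_k$ at the finite singularities $x_k$ and $m_\infty(w_0)=M_\infty(w_0)=\beta_\infty$, $m_\infty(w_k)=M_\infty(w_k)=\beta_k$ at infinity. Inequalities (\ref{jui7})--(\ref{jun70}) collapse exactly to the conditions $\alpha-\frac{1}{p(x_k)}<\beta_k<\frac{1}{p'(x_k)}$ and $\alpha-\frac{1}{p_\infty}<\beta_\infty+\sum_{k=1}^N\beta_k<\frac{1}{p'_\infty}$, which is precisely~(\ref{5tyrok}). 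This yields the asserted bounded mapping $L^{p(\cdot)}_\varrho(\mathbb{R}^1)\to L^{q(\cdot)}_\varrho(\mathbb{R}^1)$.

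The main obstacle lies in justifying the constant-exponent base inequality: making the heuristic factorization ``$T_m=T_{m_0}\circ I^{\alpha}$'' rigorous uniformly across the dyadic intervals $\Delta_j$ and uniformly in $p_0\in(1,p_-)$, while keeping the dependence on the $A_1$-constant of $w$ under control so that the extrapolation constant does not blow up. Once this is in place, the extrapolation machinery of Theorem~\ref{th4.2.} and the dimension bookkeeping for $\mathbb{R}^1$ are essentially routine.
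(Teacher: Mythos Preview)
Your overall strategy matches the paper's: invoke Theorem~\ref{th4.2.}, part~II, with $\gamma=\alpha$, verify (\ref{vgschi}), and then specialize the weight conditions (\ref{jui7})--(\ref{jun70}) on $\mathbb{R}^1$ to power weights to recover (\ref{5tyrok}). That part is carried out correctly.

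Where you diverge from the paper is in establishing the base estimate. You propose to obtain the constant-exponent off-diagonal inequality by factoring $T_m$ heuristically as a Riesz potential $I^\alpha$ composed with a BV-type dyadic multiplier, and you rightly flag making this rigorous and uniform in $p_0$ and in the $A_1$-constant as the ``main obstacle.'' The paper bypasses this entirely: it cites \cite{316zzb} directly, where it is proved that $T_m:L^{p_0}_v(\mathbb{R}^1)\to L^{q_0}_v(\mathbb{R}^1)$ is bounded for every $p_0\in(1,\infty)$, $0<\alpha<1/p_0$, $1/q_0=1/p_0-\alpha$, and every weight $v$ satisfying the Muckenhoupt--Wheeden condition (\ref{6tyrok}). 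Since $v^{q_0}\in A_1$ implies (\ref{6tyrok}) with constant controlled by the $A_1$-constant, the base inequality (\ref{new1}) follows for all $w\in A_1$ at once. So your obstacle simply disappears once you cite the right source: the weighted off-diagonal estimate for precisely this class of multiplier symbols is already a theorem in the literature, and no factorization argument is needed.
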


\begin{proof} In \cite{316zzb} there was proved that the operator
$T_m$ is bounded from $L^{p_0}_v(\rone)$ into $L^{q_0}_v(\rone)$
for every $p_0\in(1,\infty)$, $0<\al<\frac{1}{p_0}$,
$\frac{1}{q_0}=\frac{1}{p_0}-\al$, and an arbitrary weight $v$
satisfying the condition
\begin{equation}\label{6tyrok}
\sup\limits_{I}\left(\frac{1}{|I|}\intl_{I}v^{q_0}(x)dx\right)^\frac{1}{q_0}
\left(\frac{1}{|I|}\intl_{I}v^{-p^\prime_0}(x)dx\right)^\frac{1}{p_0},
\end{equation}
where the supremum is taken with respect to all one-dimensional
intervals. Condition (\ref{6tyrok}) is satisfied if $v^{q_0}\in
A_1$.  Then inequality (\ref{new1}) with $f=T_m g$ holds for every
$w\in A_1$. Then the statement of the theorem follows immediately
from Part \textit{II} of Theorem \ref{th4.2.}, conditions
(\ref{jui7})-(\ref{jun70}) turning into (\ref{5tyrok}) since
$\underline{\mathfrak{dim}}(\Om)=\underline{\mathfrak{dim}}_\infty(\Om)=1$,
$m(w_k)=M(w_k)=\bt_k, k=1,\dots, N$, and
$m(w_0)=M(w_0)=\bt_\infty.$
\end{proof}

\vspace{3mm} All the statements in the following subsections are
also  similar direct consequences of the general statement of
Theorem \ref{th4.2.} and Theorems \ref{th3.1.} and \ref{th3.2.} on
the maximal operator in the spaces $L^{p(\cdot)}_\varrho$, so that
in the sequel  for the proofs we only make references to  where
these statements were proved in the case of constant $p$ and
Muckenhoupt weights.

\subsection{Multipliers of trigonometric Fourier series}\label{subs5.2.}

With the help of Theorem \ref{th4.2.} and known results for
constant exponents, we are now able to give a generalization of
theorems on Marcinkiewicz multipliers and Littlewood-Paley
decompositions for trigonometric Fourier series to the case of
weighted  spaces with variable exponent.

Let $\mathbb{T}=[\pi,\pi]$ and let $ f$ be a $2\pi$-periodic
function and
\begin{equation}\label{5.3}
f(x) \sim \frac{a_0}{2} +\sum\limits_{k=0}^\infty (a_k \cos kx + b_k \sin kx).
\end{equation}

\begin{theorem} \label{th5.4.} Let a sequence
 $\lb_k$ satisfy the conditions
\begin{equation}\label{5vhtq}
|\lb_k|\le A \  \quad \quad \textrm{and}  \quad \quad
\sum_{k=2^{j-1}}^{{2^j}-1} |\lb_k-\lb_{k+1}|\le A,
\end{equation}
where $A>0$ does not depend on  $k$ and $j$.  Suppose that
\begin{equation}\label{nef41qs}
p \in \mathcal{P}(\mathbb{T}) \quad \textrm{and}\ \quad
\varrho^{-p_0} \in  \mathfrak{A}_{(\widetilde{p})^\prime}(\mathbb{T}), \quad \textrm{where}
 \quad  \widetilde{p}(\cdot)=\frac{p(\cdot)}{p_0}
\end{equation}
with some $p_0\in\left(1,p_-(\mathbb{T})\right)$. Then there
exists a function $F(x)\in L^{p(\cdot)}_\varrho(\mathbb{T})$ such
that the series $\frac{\lb_0 a_0}{2} +\sum\limits_{k=0}^\infty
\lb_k(a_k \cos kx + b_k sin kx)$ is Fourier series for  $F$ and
$$\|F\|_{L^{p(\cdot)}_\varrho}\le cA \|f\|_{L^{p(\cdot)}_\varrho}$$
where $c>0$ does not depend on  $f\in
L^{p(\cdot)}_\varrho(\mathbb{T})$.
\end{theorem}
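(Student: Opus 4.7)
The plan is to derive Theorem \ref{th5.4.} from the extrapolation Theorem \ref{th4.1.} in the diagonal case $p_0=q_0$. Indeed, the only nontrivial ingredient missing is the corresponding weighted estimate for constant exponent, which is the classical Marcinkiewicz multiplier theorem for $A_{p_0}$-weights on the torus.

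First I would set up the family. Fix any $p_0\in(1,p_-(\mathbb{T}))$ for which the hypothesis (\ref{nef41qs}) holds and put $q_0:=p_0$, so that (\ref{vgschi}) reduces to $p_0<p_-$ and $1/p_+>0$, both of which are valid. With this choice, (\ref{s9x34}) gives $q(\cdot)=p(\cdot)$ and $\widetilde{q}(\cdot)=\widetilde{p}(\cdot)$, and the assumption (\ref{new1bcxc54esa}) of Theorem \ref{th4.1.} becomes exactly (\ref{nef41qs}). Let $T_\lambda$ be the multiplier operator acting on trigonometric polynomials $g$ by multiplying the $k$-th Fourier coefficient by $\lambda_k$; set
\[
\mathcal{F}=\bigl\{(|T_\lambda g|,\, A|g|):\ g\text{ a trigonometric polynomial on }\mathbb{T}\bigr\}.
\]

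Second, I would invoke the classical Marcinkiewicz multiplier theorem in the weighted setting: for every $w\in A_{p_0}(\mathbb{T})$ and any sequence $\{\lambda_k\}$ obeying (\ref{5vhtq}),
\[
\Bigl(\int_{\mathbb{T}}|T_\lambda g|^{p_0}\,w\,dx\Bigr)^{1/p_0}
\le c\,A\,\Bigl(\int_{\mathbb{T}}|g|^{p_0}\,w\,dx\Bigr)^{1/p_0},
\]
with $c$ depending only on $p_0$ and the $A_{p_0}$-constant of $w$; since $A_1\subset A_{p_0}$, this yields precisely inequality (\ref{new1}) for all pairs in $\mathcal{F}$ and all $w\in A_1(\mathbb{T})$, with $c_0=cA$ (note that the right-hand weight $w^{p_0/q_0}=w$ because $p_0=q_0$).

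Third, I would apply Theorem \ref{th4.1.} to the family $\mathcal{F}$. For any $f\in L^{p(\cdot)}_\varrho(\mathbb{T})$, approximating $f$ by trigonometric polynomials in $L^{p(\cdot)}_\varrho(\mathbb{T})$ (possible since $p_+<\infty$), the conclusion (\ref{new2}) yields
\[
\|T_\lambda f\|_{L^{p(\cdot)}_\varrho}\le C\,A\,\|f\|_{L^{p(\cdot)}_\varrho}.
\]
The resulting function $F=T_\lambda f$ is in $L^{p(\cdot)}_\varrho(\mathbb{T})$ and, by construction of $T_\lambda$ on polynomials plus density, has Fourier series $\tfrac{\lambda_0 a_0}{2}+\sum_k\lambda_k(a_k\cos kx+b_k\sin kx)$, which is what was claimed.

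The main obstacle I anticipate is not the extrapolation itself (it is essentially a direct application) but rather ensuring that the weighted Marcinkiewicz multiplier theorem on $\mathbb{T}$ with $A_{p_0}$-weights is available in the literature in precisely the form needed; the authors' convention in the preceding subsection is to simply cite the corresponding classical result (e.g.\ \cite{368a}, \cite{316zz}, \cite{316zza}), and I would do the same here, observing additionally that the approximation argument on $\mathbb{T}$ works because trigonometric polynomials are dense in $L^{p(\cdot)}_\varrho(\mathbb{T})$ whenever the maximal operator is bounded (which is guaranteed by assumption (\ref{nef41qs}) through Definition~\ref{def1.1}).
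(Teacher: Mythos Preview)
Your proposal is correct and follows the paper's approach exactly: the paper does not give an explicit proof of Theorem~\ref{th5.4.} but states (just before Subsection~\ref{subs5.2.}) that all statements in the subsequent subsections are direct consequences of the extrapolation Theorem~\ref{th4.1.} (with $q_0=p_0$) combined with the known constant-exponent result for Muckenhoupt weights, and for the Marcinkiewicz multiplier/Littlewood--Paley theorems on $\mathbb{T}$ the relevant reference is Kurtz~\cite{349b} rather than the ones you listed. Your density argument for trigonometric polynomials is a natural completion of the argument that the paper leaves implicit.
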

\begin{corollary}\label{cor3} The statement   of Theorem
\ref{th5.4.} remains valid if condition (\ref{nef41qs}) is replaced by the assumption, sufficient for
(\ref{nef41qs}), that $p\in \mathcal{P}(\mathbb{T})\cap WL(\mathbb{T})$ and  the weight
 $\varrho$ has form
\begin{equation}\label{2.4asew}
\varrho(x)=\prod_{k=1}^N w_k(|x-x_k|), \quad x_k\in \mathbb{T}
\end{equation}
where
\begin{equation}\label{2.6cce}
w_k\in \widetilde{U}([0,2\pi])  \quad \textrm{and}\ \
-\frac{1}{p(x_k)} < m(w_k)\le M(w_k) < \frac{1}{p^\prime(x_k)} .
\end{equation}
\end{corollary}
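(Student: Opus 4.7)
The plan is to verify condition (\ref{nef41qs}) directly from the hypotheses of the corollary, after which Theorem \ref{th5.4.} delivers the conclusion. The key observation is that $\mathbb{T}$, viewed as a bounded metric measure space with the Lebesgue measure, has constant dimension one, so $\underline{\mathfrak{dim}}(\mathbb{T})=1$; moreover $\diam\mathbb{T}=2\pi<\infty$, so no conditions at infinity enter Definition \ref{def2.3.}. Consequently the hypotheses (\ref{2.6cce}) on the factors $w_k$ are exactly the inequalities of (\ref{2.6}) specialized to $\underline{\mathfrak{dim}}(\mathbb{T})=1$, and the weight $\varrho$ in (\ref{2.4asew}) (which is (\ref{2.4}) with $w_0\equiv 1$) belongs to $V^{osc}_{p(\cdot)}(\mathbb{T},\Pi)$.

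Next, I would fix any $p_0\in(1,p_-(\mathbb{T}))$ and invoke the second implication of Remark \ref{rem4.3.}, which yields
$$\varrho^{-p_0}\in V^{osc}_{(\widetilde{p})^\prime(\cdot)}(\mathbb{T},\Pi),\qquad \widetilde{p}(x)=\frac{p(x)}{p_0}.$$
The hypothesis $p\in\mathcal{P}(\mathbb{T})\cap WL(\mathbb{T})$ and the choice $p_0<p_-$ ensure that $\widetilde{p}(x)-1$ is bounded away from $0$, from which one verifies by routine manipulation that $(\widetilde{p})^\prime$ also lies in $\mathcal{P}(\mathbb{T})\cap WL(\mathbb{T})$.

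With these ingredients in place, Theorem \ref{th3.1.} applied to the exponent $(\widetilde{p})^\prime$ and the weight $\varrho^{-p_0}$ shows that $\mathcal{M}$ is bounded on $L^{(\widetilde{p})^\prime(\cdot)}_{\varrho^{-p_0}}(\mathbb{T})$. By Definition \ref{def1.1} of the class $\mathfrak{A}_{p(\cdot)}$, this is precisely the statement $\varrho^{-p_0}\in\mathfrak{A}_{(\widetilde{p})^\prime}(\mathbb{T})$, i.e.\ condition (\ref{nef41qs}). Then Theorem \ref{th5.4.} applies verbatim and gives the desired conclusion.

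The proof is therefore essentially a chain of citations: (\ref{2.6cce}) $\Rightarrow$ $\varrho\in V^{osc}_{p(\cdot)}(\mathbb{T},\Pi)$ by Definition \ref{def2.3.}; Remark \ref{rem4.3.} transfers this to $\varrho^{-p_0}$ with the dual exponent; Theorem \ref{th3.1.} then produces the required boundedness of $\mathcal{M}$; and finally Theorem \ref{th5.4.} concludes. The only non-mechanical step is the verification that $(\widetilde{p})^\prime$ inherits membership in $\mathcal{P}\cap WL$ from $p$, and this is a short elementary computation using $p_0<p_-$.
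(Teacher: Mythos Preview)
Your argument is correct and follows essentially the same route that the paper uses (implicitly here, and explicitly in the proofs of the analogous Corollaries \ref{cor} and \ref{cor1}): identify the hypotheses (\ref{2.6cce}) as the case $\underline{\mathfrak{dim}}(\mathbb{T})=1$ of Definition \ref{def2.3.}, pass to $\varrho^{-p_0}$ via Remark \ref{rem4.3.}, and then invoke Theorem \ref{th3.1.} to obtain $\varrho^{-p_0}\in\mathfrak{A}_{(\widetilde{p})^\prime}(\mathbb{T})$, which is (\ref{nef41qs}). Your additional verification that $(\widetilde{p})^\prime\in\mathcal{P}(\mathbb{T})\cap WL(\mathbb{T})$ is a detail the paper leaves tacit, but it is indeed elementary.
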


\begin{theorem} \label{th5.5.}
Let
\begin{equation}\label{2bfe}
A_k(x)=a_k \cos kx + b_k \sin kx, \quad k=0,1,2,... , \quad
A_{2^{-1}}=0.
\end{equation}
 Under conditions (\ref{nef41qs}) there exist
constants $c_1>0$ and $c_2>0$ such that
\begin{equation}\label{5.4ax}
c_1\|f\|_{L^{p(\cdot)}_\varrho}\le
\left\|\left(\sum\limits_{j=0}^\infty\left|\sum\limits_{k=2^{j-1}}^{2^j-1}
A_k(x)\right|^2\right)^\frac{1}{2}\right\|_{L^{p(\cdot)}_\varrho}\le
c_2\|f\|_{L^{p(\cdot)}_\varrho}
\end{equation}
for all $f\in L^{p(\cdot)}_\varrho(\mathbb{T})$.
\end{theorem}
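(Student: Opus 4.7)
The plan is to derive both inequalities in (\ref{5.4ax}) as two separate diagonal ($p_0=q_0$) applications of the extrapolation Theorem~\ref{th4.1.} on $\Omega=X=\mathbb{T}$, using as input the classical weighted Littlewood--Paley inequalities for constant exponent. Fix any $p_0\in(1,p_-(\mathbb{T}))$ (such a $p_0$ exists by hypothesis (\ref{nef41qs})). Write $S_jf(x)=\sum_{k=2^{j-1}}^{2^j-1}A_k(x)$ for the $j$-th dyadic block and $Gf(x)=\bigl(\sum_{j\ge 0}|S_jf(x)|^2\bigr)^{1/2}$ for the dyadic square function.

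For the upper bound in (\ref{5.4ax}), I would take the family
\[
\mathcal{F}_{\rm up}=\{(Gf,f): f\ \text{is a trigonometric polynomial}\}.
\]
By the classical Littlewood--Paley theorem for trigonometric Fourier series with Muckenhoupt weights (the one-dimensional periodic analogue proved in \cite{316zz}, \cite{316zza} referenced in the proof of Theorem~\ref{th5.3.}), the inequality
\[
\Bigl(\int_{\mathbb{T}}(Gf)^{p_0}w\,dx\Bigr)^{1/p_0}\le C_0\Bigl(\int_{\mathbb{T}}f^{p_0}w\,dx\Bigr)^{1/p_0}
\]
holds for every weight $w\in A_{p_0}(\mathbb{T})$, in particular for every $w\in A_1(\mathbb{T})$ since $A_1\subset A_{p_0}$. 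This is exactly hypothesis (\ref{new1}) of Theorem~\ref{th4.1.} in the diagonal case $p_0=q_0$ (for which $q(x)=p(x)$). Condition (\ref{new1bcxc54esa}) is furnished directly by the assumption (\ref{nef41qs}). Applying Theorem~\ref{th4.1.} yields the right-hand inequality in (\ref{5.4ax}) for all trigonometric polynomials, and by a standard density argument (trigonometric polynomials are dense in $L^{p(\cdot)}_{\varrho}(\mathbb{T})$ under (\ref{nef41qs}), since the maximal operator is bounded) it extends to all of $L^{p(\cdot)}_{\varrho}(\mathbb{T})$.

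For the lower bound, I would take the dual family
\[
\mathcal{F}_{\rm low}=\{(f,Gf): f\ \text{is a trigonometric polynomial}\},
\]
and invoke the reverse classical inequality $\|f\|_{L^{p_0}_w}\le C_0'\|Gf\|_{L^{p_0}_w}$, which is the other half of the same weighted Littlewood--Paley decomposition in \cite{316zz}, \cite{316zza} and again holds for every $w\in A_{p_0}\supset A_1$. A second application of Theorem~\ref{th4.1.} (with the roles of $f$ and $g$ in the family swapped, keeping $p_0=q_0$ and the same $p(\cdot)$, $\varrho$) produces the left-hand inequality in (\ref{5.4ax}).

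The only nontrivial point I foresee is checking the finiteness requirement attached to the extrapolation theorem (``for all pairs for which the left-hand side is finite''): for the upper bound one must know $Gf\in L^{p(\cdot)}_{\varrho}$ before invoking the inequality, and for the lower bound one must know $f\in L^{p(\cdot)}_{\varrho}$. Restricting $\mathcal{F}_{\rm up}$ and $\mathcal{F}_{\rm low}$ to trigonometric polynomials makes both quantities trivially finite, and then density transfers the inequalities to general $f\in L^{p(\cdot)}_{\varrho}(\mathbb{T})$, with the lower bound on $Gf$ interpreted in the usual way (first apply it to the partial sum $\sum_{j\le J}S_jf$ and let $J\to\infty$ using monotone convergence).
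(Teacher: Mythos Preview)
Your proposal is correct and follows essentially the same approach as the paper: the paper does not give a detailed argument for this theorem but explicitly states that all results in this subsection are direct consequences of the extrapolation Theorem~\ref{th4.1.}/\ref{th4.2.} combined with the known constant-exponent weighted result, which is exactly the two-family diagonal extrapolation you carry out. The only cosmetic difference is that the paper attributes the underlying $A_p$-weighted Littlewood--Paley inequality to Kurtz \cite{349b} rather than to \cite{316zz}, \cite{316zza}.
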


\vspace{2mm} In the case of constant  $p$ and $\varrho\in A_p$
this theorem was proved in \cite{349b}.

\begin{corollary}\label{cor4} Inequalities (\ref{5.4ax}) hold for
$p\in \mathcal{P}(\mathbb{T})\cap WL(\mathbb{T})$ and weights
$\varrho$ of form (\ref{2.4asew})-(\ref{2.6cce}).
\end{corollary}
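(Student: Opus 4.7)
The plan is to verify that under the hypotheses of the corollary, the condition (\ref{nef41qs}) of Theorem \ref{th5.5.} is satisfied; then Theorem \ref{th5.5.} yields (\ref{5.4ax}) directly. The strategy parallels the proofs given for Corollaries \ref{cor}, \ref{cor1}, and \ref{cor2} above: combine Remark \ref{rem4.3.} with Theorem \ref{th3.1.}.

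First I would observe that $\mathbb{T}$ is a bounded one-dimensional metric space with doubling (Lebesgue) measure for which $\underline{\mathfrak{dim}}(\mathbb{T};x)\equiv 1$ at every point, so condition (\ref{2.6cce}) on the radial factors $w_k$ coincides with condition (\ref{2.6}) defining membership in $V^{osc}_{p(\cdot)}(\mathbb{T},\Pi)$ (with no factor at infinity since $\mathbb{T}$ is bounded). Hence $\varrho \in V^{osc}_{p(\cdot)}(\mathbb{T},\Pi)$.

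Next I would apply Remark \ref{rem4.3.}: for every $p_0\in(1,p_-)$ one has
$$\varrho^{-p_0} \in V^{osc}_{(\widetilde{p})^\prime(\cdot)}(\mathbb{T},\Pi), \qquad \widetilde{p}(x)=\frac{p(x)}{p_0}.$$
Fix one such $p_0$. Since $p\in \mathcal{P}(\mathbb{T})\cap WL(\mathbb{T})$, the exponent $(\widetilde{p})^\prime$ again belongs to $\mathcal{P}(\mathbb{T})\cap WL(\mathbb{T})$ (this is the only small routine check: the log-Hölder condition is preserved under the map $p\mapsto p/(p-p_0)$ when $p_->p_0$). Therefore Theorem \ref{th3.1.} applies to the pair $\bigl((\widetilde{p})^\prime, \varrho^{-p_0}\bigr)$ and gives the boundedness of $\mathcal{M}$ in $L^{(\widetilde{p})^\prime(\cdot)}_{\varrho^{-p_0}}(\mathbb{T})$, which by Definition \ref{def1.1} means precisely
$$\varrho^{-p_0}\in \mathfrak{A}_{(\widetilde{p})^\prime}(\mathbb{T}).$$

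This is condition (\ref{nef41qs}), so Theorem \ref{th5.5.} delivers the two-sided estimate (\ref{5.4ax}) for every $f\in L^{p(\cdot)}_\varrho(\mathbb{T})$, completing the proof. The only nontrivial point is the check that $V^{osc}_{p(\cdot)}$-membership is inherited by $\varrho^{-p_0}$ with exponent $(\widetilde{p})^\prime$, but this is exactly the content of Remark \ref{rem4.3.} and so requires no further work here.
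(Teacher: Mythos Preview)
Your proposal is correct and follows exactly the pattern the paper uses for the analogous Corollaries \ref{cor}, \ref{cor1}, \ref{cor2}, and \ref{cor3}: verify via Remark \ref{rem4.3.} and Theorem \ref{th3.1.} (the bounded-domain case, since $\mathbb{T}$ is bounded) that the weight condition in (\ref{nef41qs}) is satisfied, then invoke Theorem \ref{th5.5.}. The paper states Corollary \ref{cor4} without a separate proof, but your argument is precisely the intended one.
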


\vspace{4mm}

\subsection{Majorants of partial sums of Fourier series} \label{subs5.3.}
Let
$$S_\ast(f)=S_\ast(f,x)=\sup\limits_{k\ge 0} |S_k(f,x)|,$$
where $S_k(f,x)=\sum\limits_{j=0}^kA_j(x)$ is a partial sum of
Fourier series (\ref{5.3}).

\begin{theorem} \label{th5.6.} Under conditions (\ref{nef41qs})
\begin{equation}\label{5.4}
\|S_\ast(f)\|_{L^{p(\cdot)}_\varrho}\le c
\|f\|_{L^{p(\cdot)}_\varrho},
\end{equation}
for all  $f\in L^{p(\cdot)}_\varrho(\mathbb{T})$, where the
constant  $c>0$ does not depend on $f$.
\end{theorem}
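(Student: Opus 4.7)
The plan is to apply the extrapolation Theorem \ref{th4.1.} in its diagonal form (where $p_0=q_0$ and therefore $q(\cdot)\equiv p(\cdot)$), using as input the classical weighted boundedness of the Carleson--Hunt majorant operator with Muckenhoupt weights. Observe that condition (\ref{nef41qs}) is precisely the diagonal specialization of the extrapolation hypothesis (\ref{new1bcxc54esa}): with $q_0=p_0$ and $\widetilde{q}=\widetilde{p}$ the requirement $\varrho^{-q_0}\in\mathfrak{A}_{(\widetilde{q})^\prime}(\mathbb{T})$ coincides with (\ref{nef41qs}).

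First I would recall the classical Hunt--Young weighted estimate: for every constant exponent $p_0\in(1,\infty)$ and every Muckenhoupt weight $w\in A_{p_0}(\mathbb{T})$, one has
\begin{equation*}
\intl_\mathbb{T}|S_\ast(f,x)|^{p_0}w(x)\,dx\le C\intl_\mathbb{T}|f(x)|^{p_0}w(x)\,dx,
\end{equation*}
with $C$ depending only on $p_0$ and the $A_{p_0}$-constant of $w$. In particular, since $A_1\subset A_{p_0}$, the inequality holds for every $w\in A_1(\mathbb{T})$, and this is the form required by (\ref{new1}).

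Next, I would introduce the family
\begin{equation*}
\mathcal{F}=\bigl\{(S_\ast(f),|f|)\colon f\in L^{p(\cdot)}_\varrho(\mathbb{T})\bigr\}
\end{equation*}
(working initially with, say, trigonometric polynomials and then passing to the general case by density, in order to guarantee that only pairs with finite left-hand side are invoked). For any $p_0\in(1,p_-(\mathbb{T}))$ supplied by hypothesis (\ref{nef41qs}), setting $q_0=p_0$ the triple $(p_0,q_0,p(\cdot))$ fulfills (\ref{vgschi}) trivially, and (\ref{s9x34}) yields $q(\cdot)=p(\cdot)$. Combining the Hunt--Young estimate above (which supplies (\ref{new1})) with the assumption (\ref{nef41qs}) (which supplies (\ref{new1bcxc54esa})), Theorem \ref{th4.1.} applied to the family $\mathcal{F}$ gives exactly the inequality (\ref{5.4}).

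The substantive ingredient is of course the Hunt--Young weighted theorem, but that is cited from the literature; the only point requiring a little care is the technicality that Theorem \ref{th4.1.} applies to pairs $(f,g)\in\mathcal{F}$ with finite left-hand side. I would handle this in the usual way by first establishing (\ref{5.4}) on a dense subclass (trigonometric polynomials, where $S_\ast(f)$ is manifestly in $L^{p(\cdot)}_\varrho$ because it is bounded) and then extending to arbitrary $f\in L^{p(\cdot)}_\varrho(\mathbb{T})$ by the boundedness just obtained, exactly as in the proofs of Theorems \ref{th5.1.}--\ref{th5.3.}.
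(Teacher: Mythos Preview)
Your proposal is correct and matches the paper's approach exactly: the paper does not give a separate proof of Theorem~\ref{th5.6.} but states (just before Subsection~\ref{subs5.2.}) that all such results are direct consequences of the extrapolation Theorem~\ref{th4.1.}/\ref{th4.2.} together with the corresponding constant-exponent Muckenhoupt-weighted result, and for $S_\ast$ it cites precisely the Hunt--Young theorem \cite{236a} that you invoke. Your handling of the diagonal case $p_0=q_0$ and the density argument are the expected details.
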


\vspace{4mm}In the case of constant  $p$ and $\varrho\in A_p$,
Theorem \ref{th5.6.} was proved in  \cite{236a}.

\begin{corollary}\label{cor5} Inequality (\ref{5.4}) is valid for
$p\in \mathcal{P}(\mathbb{T})\cap WL(\mathbb{T})$ and weights
$\varrho$ of form (\ref{2.4asew})-(\ref{2.6cce}).
\end{corollary}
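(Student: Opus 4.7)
The plan is to reduce the corollary to Theorem \ref{th5.6.} by verifying the single substantive hypothesis (\ref{nef41qs}), exactly in the pattern established by Corollaries \ref{cor3} and \ref{cor4}. That is, given $p\in\mathcal{P}(\mathbb{T})\cap WL(\mathbb{T})$ and a weight $\varrho$ of the form (\ref{2.4asew}) satisfying (\ref{2.6cce}), I need to exhibit some $p_0\in (1,p_-(\mathbb{T}))$ for which $\varrho^{-p_0}\in\mathfrak{A}_{(\widetilde{p})'}(\mathbb{T})$; once this is in hand, Theorem \ref{th5.6.} delivers (\ref{5.4}) immediately.

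To produce such a $p_0$, I would invoke Remark \ref{rem4.3.}: because $\mathbb{T}$ is bounded, the weight $\varrho$ of form (\ref{2.4asew})--(\ref{2.6cce}) is exactly a weight in $V^{osc}_{p(\cdot)}(\mathbb{T},\Pi)$ (the ``infinity'' factor $w_0$ is absent and $\underline{\mathfrak{dim}}(\mathbb{T})=1$, so (\ref{2.6}) reduces to (\ref{2.6cce})). Remark \ref{rem4.3.} then gives, for every $p_0\in(1,p_-)$, the implication
\[
\varrho^{-p_0}\in V^{osc}_{(\widetilde{p})'(\cdot)}(\mathbb{T},\Pi),
\]
where $\widetilde{p}(x)=p(x)/p_0$. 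One only has to check, for the sake of Remark \ref{rem4.3.}, that $(\widetilde{p})'$ is an admissible exponent, which is routine: the constant rescaling preserves membership in $\mathcal{P}(\mathbb{T})\cap WL(\mathbb{T})$, and passing to the conjugate preserves both (\ref{1.1}) and (\ref{1.3}) because $\widetilde{p}_->1$ keeps the denominator in $(\widetilde{p})'=\widetilde{p}/(\widetilde{p}-1)$ bounded away from zero.

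Finally, since $\mathbb{T}$ is a bounded doubling metric measure space and $(\widetilde{p})'\in\mathcal{P}(\mathbb{T})\cap WL(\mathbb{T})$ with $\varrho^{-p_0}\in V^{osc}_{(\widetilde{p})'(\cdot)}(\mathbb{T},\Pi)$, Theorem \ref{th3.1.} applies and yields boundedness of $\mathcal{M}$ in $L^{(\widetilde{p})'(\cdot)}_{\varrho^{-p_0}}(\mathbb{T})$, which is precisely the membership $\varrho^{-p_0}\in\mathfrak{A}_{(\widetilde{p})'}(\mathbb{T})$ needed in (\ref{nef41qs}). I do not anticipate any genuine obstacle: the only point that requires a moment's care is the verification that the assumptions on $p$ transfer to $(\widetilde{p})'$, and the identification of (\ref{2.4asew})--(\ref{2.6cce}) as the specialization of $V^{osc}_{p(\cdot)}(\mathbb{T},\Pi)$ to the bounded one-dimensional setting $\mathbb{T}$.
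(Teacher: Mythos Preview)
Your proposal is correct and follows exactly the pattern the paper uses for this family of corollaries (see the proof of Corollary~\ref{cor} and the remark preceding Subsection~\ref{subs5.2.}): verify condition (\ref{nef41qs}) by combining Remark~\ref{rem4.3.} with Theorem~\ref{th3.1.}, then invoke the parent theorem. The paper gives no separate proof for Corollary~\ref{cor5} beyond this implicit template, so your write-up is in fact more detailed than the original.
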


 \subsection{Zygmund and Cesaro summability for trigonometric series in
 $L_\varrho^{p(\cdot)}(\mathbb{T})$} \label{subs5.u4.}

Under notation (\ref{5.3}) and (\ref{2bfe}) we introduce the
Zygmund and Cesaro means of summability
$$Z_n^{(2)}(f,x)= \sum\limits_{k=0}^n \left[1-
\left(\frac{k}{n+1}\right)^2\right]A_k(x)$$ and
$$\sg_n(f,x)=\frac{1}{n+1}\sum\limits_{k=0}^n S_k(f,x),$$
respectively. By
$$\Om_{p,\varrho}(f,\dl)=\sup\limits_{0<h<\dl}\|(I-\tau_h)f\|_{L_\varrho^{p(\cdot)}}$$
we denote the continuity modulus of a function $f$ in
$L_\varrho^{p(\cdot)}(\mathbb{T})$ with respect to the generalized
shift (Steklov mean)
$$\tau_h f(x)=\frac{1}{2h}\intl_{x-h}^{x+h} f(t) dt.$$
\begin{theorem}\label{thnew}
Under conditions (\ref{nef41qs}) there hold the estimates
\begin{equation}\label{43kj}
\|f(\cdot)-Z_n^{(2)}(f,\cdot)\|_{L_\varrho^{p(\cdot)}} \le
C\Om_{p,\varrho}\left(f,\frac{1}{n}\right)
\end{equation}
and
\begin{equation}\label{43kjbc}
\|f(\cdot)-\sg_n(f,\cdot)\|_{L_\varrho^{p(\cdot)}} \le
Cn\Om_{p,\varrho}\left(f,\frac{1}{n}\right).
\end{equation}
\end{theorem}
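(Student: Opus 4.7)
The plan is to combine explicit kernel representations of the differences $f - Z_n^{(2)}f$ and $f - \sg_n f$ with variable-exponent Fourier-multiplier estimates and the extrapolation Theorem \ref{th4.1.}, following the scheme of the proofs of Corollaries \ref{cor3}--\ref{cor5}.

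For the Zygmund-mean estimate (\ref{43kj}), I would write $f - Z_n^{(2)}f = T_n\bigl((I-\tau_{1/n})f\bigr)$, where $T_n$ is the multiplier operator on $\mathbb{T}$ with symbol
$$m_n(k) = \frac{(k/(n+1))^2\,\mathbf{1}_{|k|\le n} + \mathbf{1}_{|k|>n}}{1 - \sin(k/n)/(k/n)}.$$
Since $1-\sin u/u \sim u^2/6$ as $u\to 0$ and $1-\sin u/u$ is bounded away from zero on $[1,\infty)$, the family $\{m_n\}$ is uniformly bounded; an elementary computation shows that the dyadic total variation of $m_n$ is also bounded uniformly in $n$, so $m_n$ satisfies condition (\ref{5vhtq}). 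Theorem \ref{th5.4.} then yields that $T_n$ is bounded on $L^{p(\cdot)}_\varrho(\mathbb{T})$ uniformly in $n$, giving
$$\|f - Z_n^{(2)}f\|_{L^{p(\cdot)}_\varrho} \le C\,\|(I-\tau_{1/n})f\|_{L^{p(\cdot)}_\varrho} \le C\,\Om_{p,\varrho}(f, 1/n).$$

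For the Cesaro-mean estimate (\ref{43kjbc}), the analogous multiplier would have a low-frequency blow-up of order $n/|k|$, so a direct multiplier argument fails. Instead, I would start from the symmetric convolution identity
$$f(x) - \sg_n f(x) = -\frac{1}{4\pi}\int_{-\pi}^\pi \bigl[f(x+t)+f(x-t)-2f(x)\bigr] F_n(t)\,dt,$$
where $F_n$ is the Fejer kernel, apply Minkowski's integral inequality in $L^{p(\cdot)}_\varrho$, and use the dilation property
$$\bigl\|f(\cdot+t)+f(\cdot-t)-2f(\cdot)\bigr\|_{L^{p(\cdot)}_\varrho} \le C(1+n|t|)^2\,\Om_{p,\varrho}(f, 1/n).$$
Combined with $F_n(t) \le C\min(n, 1/(nt^2))$, the elementary computation $\int_{-\pi}^\pi (1+n|t|)^2 F_n(t)\,dt \le Cn$ produces the required factor of $n$ on the right-hand side.

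The main obstacle is the dilation inequality above, since translations are not isometries on $L^{p(\cdot)}_\varrho$. I would establish it by first verifying that translations are uniformly bounded on $L^{p(\cdot)}_\varrho(\mathbb{T})$ under condition (\ref{nef41qs})---which reduces, through the extrapolation Theorem \ref{th4.1.}, to the constant-exponent fact that translations are bounded on $L^{p_0}(w)$ for $w\in A_{p_0}$---and then writing the second difference at scale $\lambda h$ as a telescoping sum of translated copies of the second difference at scale $h$, via iteration of the identity $f(x+2h)+f(x-2h)-2f(x) = [f(x+2h)+f(x)-2f(x+h)] + 2[f(x+h)+f(x-h)-2f(x)] + [f(x)+f(x-2h)-2f(x-h)]$. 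Combined with the equivalence $\Om_{p,\varrho}(f, h) \sim \sup_{|s|\le h}\|f(\cdot+s)+f(\cdot-s)-2f(\cdot)\|_{L^{p(\cdot)}_\varrho}$, which follows from the integral representation $(I-\tau_h)f(x)=-\frac{1}{4h}\int_{-h}^h [f(x+s)+f(x-s)-2f(x)]\,ds$ in one direction and a Marchaud-type argument in the other, this yields the desired dilation inequality and completes the proof.
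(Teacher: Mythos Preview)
Your treatment of (\ref{43kj}) is essentially the paper's: the paper splits $f-Z_n^{(2)}f=(f-S_nf)+(S_nf-Z_n^{(2)}f)$, handles the first piece by the cited estimate (\ref{4vr5}) from \cite{IKS}, and writes the second piece as the multiplier $\lb_{k,n}=\frac{(k/(n+1))^2}{1-\sin(k/n)/(k/n)}$ (zero for $k>n$) applied to $f-\tau_{1/n}f$, invoking Theorem \ref{th5.4.}. Your single-multiplier variant is equivalent, merely absorbing the high-frequency part into the same symbol.

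For (\ref{43kjbc}) your diagnosis is wrong and the detour is both unnecessary and flawed. The paper uses \emph{exactly the same} multiplier scheme: since the target inequality carries an explicit factor $n$, one writes
\[
S_nf-\sg_nf=\sum_{k=1}^n\frac{k}{n+1}A_k
= n\sum_{k}\lb_{k,n}\Bigl(1-\tfrac{\sin(k/n)}{k/n}\Bigr)A_k,
\qquad
\lb_{k,n}=\frac{k/(n+1)}{n\bigl(1-\sin(k/n)/(k/n)\bigr)}\ \ (k\le n),\ \ 0\ (k>n).
\]
Because $1-\sin u/u\sim u^2/6$ near $0$, one has $\lb_{k,n}\sim 6/k$ for small $k$ and $\lb_{k,n}$ is bounded for $k$ comparable to $n$; the sequence satisfies (\ref{5vhtq}) uniformly in $n$. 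Theorem \ref{th5.4.} then gives $\|S_nf-\sg_nf\|_{L_\varrho^{p(\cdot)}}\le Cn\|f-\tau_{1/n}f\|_{L_\varrho^{p(\cdot)}}$, and combining with (\ref{4vr5}) yields (\ref{43kjbc}). The ``low-frequency blow-up $n/|k|$'' you observed is precisely the factor $n$ already allowed on the right-hand side.

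Your alternative route has a genuine gap. You claim that translations are uniformly bounded on $L^{p(\cdot)}_\varrho(\mathbb{T})$ and that this follows from Theorem \ref{th4.1.} together with boundedness of translations on $L^{p_0}(w)$ for $w\in A_{p_0}$. But extrapolation requires the $L^{p_0}(w)$-bound to depend only on the $A_1$-constant of $w$, and this fails for translations: $\|f(\cdot+t)\|_{L^{p_0}(w)}=\|f\|_{L^{p_0}(w(\cdot-t))}$, and for a power weight $w(x)=|x-x_0|^a\in A_1$ the ratio $w(x-t)/w(x)$ is unbounded near $x=x_0+t$, independently of $[w]_{A_1}$. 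Consequently neither the uniform translation bound nor the dilation inequality $\|f(\cdot+t)+f(\cdot-t)-2f(\cdot)\|_{L^{p(\cdot)}_\varrho}\le C(1+n|t|)^2\Om_{p,\varrho}(f,1/n)$ can be obtained this way, and the equivalence $\Om_{p,\varrho}(f,h)\sim\sup_{|s|\le h}\|f(\cdot+s)+f(\cdot-s)-2f(\cdot)\|_{L^{p(\cdot)}_\varrho}$ is likewise unavailable. The Steklov-mean formulation of $\Om_{p,\varrho}$ is used precisely to avoid pointwise translations; stay with it and use the multiplier argument above.
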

\begin{proof}
We make use  of the estimate
\begin{equation}\label{4vr5}
\|f(\cdot)-S_n(f,\cdot)\|_{L_\varrho^{p(\cdot)}} \le
C\Om_{p,\varrho}\left(f,\frac{1}{n}\right)
\end{equation}
proved in \cite{IKS} under assumptions (\ref{nef41qs}). For the
difference $S_n(f,x)-Z_n^{(2)}(f,x)$ we have
\begin{equation}\label{4nevr5}
\|S_n(f,\cdot)-Z_n^{(2)}(f,\cdot)\|_{L_\varrho^{p(\cdot)}}=
\left\|\sum\limits_{k=1}^n
\left(\frac{k}{n+1}\right)^2A_k(\cdot)\right\|_{L_\varrho^{p(\cdot)}}.
\end{equation}
 Keeping in mind that
\begin{equation}\label{4bv9a}
f(x)-\tau_h f(x)\sim \sum\limits_{k=1}^\infty\left(1-\frac{\sin
kh}{kh}\right)A_k(x),
\end{equation}
 we transform (\ref{4nevr5}) to
$$\|S_n(f,\cdot)-Z_n^{(2)}(f,\cdot)\|_{L_\varrho^{p(\cdot)}}
= \left\|\sum\limits_{k=1}^n \lb_{k,n} \left(1-\frac{\sin
\frac{k}{n}}{\frac{k}{n}}\right)A_k(\cdot)\right\|_{L_\varrho^{p(\cdot)}}$$
where
$$\lb_{k,n}=\left\{\begin{array}{cc}\frac{\left(\frac{k}{n+1}\right)^2}
{1-\frac{\sin \frac{k}{n}}{\frac{k}{n}}}, & k\le n
\\
 0,  & k>n\end{array}\right.$$
It is easy to check that the multiplier $\lb_{k,n}$ satisfies
assumptions (\ref{5vhtq}) of Theorem \ref{th5.4.} with the
constant $A$ in (\ref{5vhtq}) not depending on $n$. Therefore, by
Theorem \ref{th5.4.} we get
$$\|S_n(f,\cdot)-Z_n^{(2)}(f,\cdot)\|_{L_\varrho^{p(\cdot)}}
\le C\left\|\sum\limits_{k=1}^\infty  \left(1-\frac{\sin
\frac{k}{n}}{\frac{k}{n}}\right)A_k(\cdot)\right\|_{L_\varrho^{p(\cdot)}} = C\left\|f-\tau_h
f\right\|_{L_\varrho^{p(\cdot)}}$$ by (\ref{4bv9a}). Then in view of (\ref{4vr5}) estimate
(\ref{43kj}) follows.

Estimate (\ref{43kjbc}) is similarly obtained,  with the
multiplier $\lb_{k,n}$ of the form
$$\left\{\begin{array}{cc}\frac{\frac{k}{n+1}} {n\left(1-\frac{\sin
\frac{k}{n}}{\frac{k}{n}}\right)}, & k\le n
\\
 0,  & k>n\end{array}\right..$$
\end{proof}
\begin{corollary}\label{cor5vc} Estimates  (\ref{43kj}),(\ref{43kjbc}) are valid for
$p\in \mathcal{P}(\mathbb{T})\cap WL(\mathbb{T})$ and weights
$\varrho$ of form (\ref{2.4asew})-(\ref{2.6cce}).
\end{corollary}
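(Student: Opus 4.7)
The plan is to show that the hypotheses of this corollary imply the abstract weight condition (\ref{nef41qs}), after which the two estimates follow immediately from Theorem \ref{thnew}.

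First I would observe that $\mathbb{T}$ is a bounded doubling metric measure space of constant dimension $1$, so $\underline{\mathfrak{dim}}(\mathbb{T}) = 1$ and condition (\ref{2.6cce}) is precisely the specialization of (\ref{2.6}) to this setting. Hence any weight $\varrho$ of the form (\ref{2.4asew}) satisfying (\ref{2.6cce}) belongs to $V^{osc}_{p(\cdot)}(\mathbb{T}, \Pi)$ with $\Pi = \{x_1, \dots, x_N\}$ (with no point at infinity, since $\mathbb{T}$ is bounded, so no condition of type (\ref{f27dco}) is required).

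Next I would choose $p_0 \in (1, p_-(\mathbb{T}))$ and apply Remark \ref{rem4.3.}, which yields
$$\varrho^{-p_0} \in V^{osc}_{(\widetilde{p})'(\cdot)}(\mathbb{T}, \Pi), \qquad \widetilde{p}(x) = \frac{p(x)}{p_0}.$$
Since $p \in \mathcal{P}(\mathbb{T}) \cap WL(\mathbb{T})$ implies $(\widetilde{p})' \in \mathcal{P}(\mathbb{T}) \cap WL(\mathbb{T})$ as well, Theorem \ref{th3.1.} applies to the exponent $(\widetilde{p})'$ and the weight $\varrho^{-p_0}$, giving the boundedness of the Hardy--Littlewood maximal operator on $L^{(\widetilde{p})'(\cdot)}_{\varrho^{-p_0}}(\mathbb{T})$. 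By Definition \ref{def1.1} this is exactly the statement that $\varrho^{-p_0} \in \mathfrak{A}_{(\widetilde{p})'}(\mathbb{T})$, which together with $p \in \mathcal{P}(\mathbb{T})$ constitutes condition (\ref{nef41qs}).

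With (\ref{nef41qs}) verified, estimates (\ref{43kj}) and (\ref{43kjbc}) follow directly from Theorem \ref{thnew}. There is no real obstacle here; the only point that requires any care is checking that the exponent dual $(\widetilde{p})'$ inherits both the bounds $1 < ((\widetilde{p})')_- \le ((\widetilde{p})')_+ < \infty$ and the weak Lipschitz property from $p$, which is standard once $p_0 < p_-$ is chosen.
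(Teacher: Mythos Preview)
Your proof is correct and follows exactly the route the paper uses for the analogous corollaries (cf.\ the proofs of Corollaries \ref{cor} and \ref{cor1}): identify the weight as belonging to $V^{osc}_{p(\cdot)}(\mathbb{T},\Pi)$, invoke Remark \ref{rem4.3.} to pass to $\varrho^{-p_0}\in V^{osc}_{(\widetilde{p})'(\cdot)}(\mathbb{T},\Pi)$, and then apply Theorem \ref{th3.1.} to verify condition (\ref{nef41qs}) so that Theorem \ref{thnew} applies. The extra care you take in checking that $(\widetilde{p})'\in\mathcal{P}(\mathbb{T})\cap WL(\mathbb{T})$ is a welcome detail that the paper leaves implicit.
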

\begin{remark}\label{643}
When  $p>1$ is constant, estimates (\ref{43kj}),(\ref{43kjbc}) in
the non-weighted  case were obtained in \cite{Knew}.
\end{remark}

 \subsection{Cauchy singular integral} \label{subs5.4.}

We consider the singular integral operator
$$S_\Gm f(t)=\frac{1}{\pi i} \intl_\Gm\frac{f(\tau)\, d\nu(\tau)}{\tau-t},$$
where $\Gm$ is a simple finite Carleson curve and  $\nu$ is an arc
length.

\begin{theorem} \label{th5.7.} Let
\begin{equation}\label{nef41}
p \in \mathcal{P}(\Gm) \quad \textrm{and}\ \quad
\varrho^{-p_0}\in
 \mathfrak{A}_{(\widetilde{p})^\prime}(\Gm)
\end{equation}
for some $p_0\in (1,p_-)$, where
 $ \widetilde{p}(\cdot)=\frac{p(\cdot)}{p_0}$.
 Then the
operator  $S_\Gm$ is bounded in the space $L^{p(\cdot)}_\varrho
(\Gm)$ .
\end{theorem}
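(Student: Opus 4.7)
The plan is to apply the diagonal case of the extrapolation Theorem \ref{th4.1.}, i.e.\ take $q_0=p_0$ so that $\gamma=\frac{1}{p_0}-\frac{1}{q_0}=0$ and hence $q(x)\equiv p(x)$ via (\ref{s9x34}). Under this choice, the hypothesis $\varrho^{-q_0}\in\mathfrak{A}_{(\widetilde{q})^\prime}(\Om)$ of Theorem \ref{th4.1.} collapses exactly into the assumption (\ref{nef41}) of the present theorem, and conclusion (\ref{new2}) becomes precisely the desired $L^{p(\cdot)}_\varrho(\Gm)$-boundedness of $S_\Gm$. So the whole proof is reduced to verifying the constant-exponent input (\ref{new1}) of the extrapolation theorem for the family
\[
\mathcal{F}=\bigl\{(|S_\Gm g|,\,|g|)\ :\ g\in C(\Gm)\bigr\},
\]
where $X=\Gm$ is endowed with arclength measure $d\nu$ (which is doubling since $\Gm$ is Carleson; note however that doubling is not even required for Theorem \ref{th4.1.}).

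The inequality to be established is: for every $w\in A_1(\Gm)$,
\[
\left(\intl_\Gm |S_\Gm g(t)|^{p_0} w(t)\,d\nu(t)\right)^{\!1/p_0}\le c_0\left(\intl_\Gm |g(t)|^{p_0} w(t)\,d\nu(t)\right)^{\!1/p_0}.
\]
Since $A_1(\Gm)\subset A_{p_0}(\Gm)$ for $p_0>1$, this is a direct consequence of the classical one-weight $L^{p_0}$-boundedness of the Cauchy singular integral on a Carleson curve with Muckenhoupt weight; this is the result of G.~David on $L^p$-boundedness of $S_\Gm$ combined with the standard Coifman--Fefferman--type weighted extension (equivalently, the theory of Calder\'on--Zygmund operators on spaces of homogeneous type applied to the Carleson measure). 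Once this is in hand, Theorem \ref{th4.1.} yields the norm estimate for $g$ ranging over the chosen dense subclass of $L^{p(\cdot)}_\varrho(\Gm)$.

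Finally one extends the estimate from $\mathcal{F}$ to all of $L^{p(\cdot)}_\varrho(\Gm)$ by density; density of $C(\Gm)$ (or any similar nice class) in $L^{p(\cdot)}_\varrho(\Gm)$ is ensured by $1<p_-\le p_+<\infty$ and the assumption on the weight. The main obstacle is nothing internal to the argument but simply locating the correct classical weighted Carleson-curve estimate; all the work specific to variable exponents is already encapsulated in Theorem \ref{th4.1.}, so the proof is essentially a one-line invocation once (\ref{new1}) is cited. This is consistent with the authors' remark at the end of Subsection \ref{subs5.1.} that the statements in the following subsections are immediate consequences of the extrapolation machinery combined with the corresponding constant-exponent Muckenhoupt-weighted theorem.
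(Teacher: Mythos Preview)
Your proposal is correct and follows exactly the route the paper takes: invoke the diagonal case $q_0=p_0$ of Theorem~\ref{th4.1.} so that hypothesis (\ref{new1bcxc54esa}) becomes (\ref{nef41}), and feed in the classical constant-exponent weighted boundedness of $S_\Gm$ (the paper cites \cite{310a} and \cite{63} for this, rather than David and Coifman--Fefferman, but the content is the same). This is precisely what the authors intend by their remark at the end of Subsection~\ref{subs5.1.}, and the paper itself gives no proof beyond that citation.
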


 For  the case of constant
   $p$ and  $\varrho^p\in A_p(\Gm)$, Theorem \ref{th5.7.} by different methods was proved in  \cite{310a} and
   \cite{63}. (As is known, $\varrho^{-p_0}\in
 \mathfrak{A}_{(\widetilde{p})^\prime}(\Gm))\Longleftrightarrow \varrho^p\in
 A_{\frac{p}{p_0}}(\Gm)$ for an
 arbitrary Carleson curve in the case of constant $p$,
  see \cite{310a} and \cite{63},  so that the
   conditions $\varrho^{-p_0}\in
 \mathfrak{A}_{(\widetilde{p})^\prime}(\Gm))$
and $\varrho^p\in A_p(\Gm)$ are equivalent  in the sense that the former always yields the latter
for every $p_0>1$ and the latter
  yields the former for some $p_0>1$).

\begin{corollary}\label{cor6}  The
operator  $S_\Gm$ is bounded in the space $L^{p(\cdot)}_\varrho
(\Gm)$, if  $p\in \mathcal{P}(\Gm)\cap WL(\Gm)$ and the  weight
$\varrho$ has the  form

\begin{equation}\label{2.4axcsew}
\varrho(t)=\prod_{k=1}^N w_k(|t-t_k|), \quad t_k\in \Gm,
\end{equation}where
\begin{equation}\label{2.6ccece}
w_k\in \widetilde{U}([0,\nu(\Gm)])  \quad \textrm{and}\ \
-\frac{1}{p(t_k)} < m(w_k)\le M(w_k) < \frac{1}{p^\prime(t_k)} .
\end{equation}

\end{corollary}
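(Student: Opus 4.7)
The plan is to follow the template used throughout Section~\ref{subs5.} for deriving corollaries from the abstract statement of a theorem formulated in terms of the condition $\varrho^{-p_0}\in\mathfrak{A}_{(\widetilde{p})^\prime}$. The first step is to observe that a simple finite Carleson curve $\Gm$, equipped with arc length $\nu$, is a bounded doubling metric measure space of constant dimension equal to $1$: the Carleson condition $\nu(\Gm\cap B(t,r))\asymp r$ gives $\mu_0(t,r)=r$ in (\ref{dsgh}), hence $\underline{\mathfrak{dim}}(\Gm;t)\equiv 1$ uniformly in $t\in\Gm$. With this identification the defining conditions (\ref{2.6}) of the class $V^{osc}_{p(\cdot)}(\Gm,\Pi)$ collapse precisely to (\ref{2.6ccece}), while the conditions at infinity drop out because $\Gm$ is bounded. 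Hence the weight $\varrho$ of form (\ref{2.4axcsew}) lies in $V^{osc}_{p(\cdot)}(\Gm,\Pi)$.

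Next I would fix any $p_0\in(1,p_-)$ and invoke the second implication of Remark~\ref{rem4.3.}, which gives $\varrho^{-p_0}\in V^{osc}_{(\widetilde{p})^\prime(\cdot)}(\Gm,\Pi)$ with $\widetilde{p}(t)=p(t)/p_0$. A short direct check confirms that $(\widetilde{p})^\prime\in\mathcal{P}(\Gm)\cap WL(\Gm)$, since the operations $p\mapsto p/p_0$ (for a constant $p_0\in(1,p_-)$) and $p\mapsto p^\prime$ preserve both the bounds (\ref{1.1}) and the weak-Lipschitz condition (\ref{1.3}).

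With these two ingredients in hand I would apply the Carleson-curve version of Theorem~\ref{th3.1.}, proved in \cite{JFSA} and referenced immediately after the statement of Theorem~\ref{th3.2.}, to the exponent $(\widetilde{p})^\prime$ and the weight $\varrho^{-p_0}$. This yields boundedness of $\mathcal{M}$ in $L^{(\widetilde{p})^\prime(\cdot)}_{\varrho^{-p_0}}(\Gm)$, i.e.\ $\varrho^{-p_0}\in\mathfrak{A}_{(\widetilde{p})^\prime}(\Gm)$, which is precisely hypothesis (\ref{nef41}) of Theorem~\ref{th5.7.}; applying that theorem concludes the proof. The only step that requires any real attention is the first one, namely the verification of $\underline{\mathfrak{dim}}(\Gm)=1$ directly from the Carleson growth estimate for $\nu(\Gm\cap B(t,r))$; the remainder is a purely mechanical chaining of Remark~\ref{rem4.3.}, the maximal-operator theorem, and Theorem~\ref{th5.7.}, in complete parallel with the proofs of Corollaries~\ref{cor}, \ref{cor1}, and \ref{cor3}--\ref{cor5}.
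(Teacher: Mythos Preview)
Your proposal is correct and follows exactly the template the paper uses for the parallel corollaries (e.g.\ Corollaries~\ref{cor}, \ref{cor1}, \ref{cor3}--\ref{cor5}): identify the weight as lying in $V^{osc}_{p(\cdot)}(\Gm,\Pi)$ via the constant-dimension observation $\underline{\mathfrak{dim}}(\Gm)=1$, pass to $\varrho^{-p_0}\in V^{osc}_{(\widetilde{p})^\prime(\cdot)}(\Gm,\Pi)$ by Remark~\ref{rem4.3.}, then apply Theorem~\ref{th3.1.} to obtain condition~(\ref{nef41}) and conclude by Theorem~\ref{th5.7.}. The paper itself states Corollary~\ref{cor6} without a separate proof, relying on this same mechanism.
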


In the case of power weights, the statement of Corollary
\ref{cor6}  was proved in
 \cite{317b}, where the case of an infinite Carleson curve was also dealt with.

\subsection{Multidimensional singular operators}\label{subs5.5.}

\vspace{3mm}

We consider a multidimensional  singular operator
\begin{equation}\label{5.40}
Tf(x)=\lim\limits_{\ve\to 0}\intl\limits_{y\in\Om : |x-y|>\ve}
K(x,y) f(y)\,dy, \quad x\in \Om\subseteq \mathbb{R}^n,
\end{equation}
where we assume that the singular kernel $K(x,y)$ satisfies the
assumptions:
\begin{equation}\label{5.4a}
|K(x,y)|\le C |x-y|^{-n},
\end{equation}
\begin{equation}\label{5.4b}
|K(x^\prime,y)-K(x,y)|\le C \frac{|x^\prime
-x|^\al}{|x-y|^{n+\al}}, \quad \quad |x^\prime
-x|<\frac{1}{2}|x-y|,
\end{equation}
\begin{equation}\label{5.4c}
|K(x,y^\prime)-K(x,y)|\le C \frac{|y^\prime
-y|^\al}{|x-y|^{n+\al}}, \quad \quad |y^\prime
-y|<\frac{1}{2}|x-y|,
\end{equation}
where $\al$ is an arbitrary positive exponent,
\begin{equation}\label{5.4ca}
\textrm{there exists} \ \ \ \lim\limits_{\ve\to 0}\intl_{y\in\Om:
 |x-y|>\ve} K(x,y)\, dy,
\end{equation}

\begin{equation}\label{5.4d}
\textrm{operator (\ref{5.40}) is bounded in $L^2(\Om)$. }
\end{equation}

 \begin{theorem} \label{th5.8.}  Let the
kernel  $K(x,y)$ fulfill conditions (\ref{5.4a})-(\ref{5.4d}).
Then under the conditions
\begin{equation}\label{nef41f4}
p \in \mathcal{P}(\Om) \quad \textrm{and}\ \quad
\varrho^{-p_0}\in
 \mathfrak{A}_{(\widetilde{p})^\prime}(\Om) \quad \textrm{with}
 \quad  \widetilde{p}(\cdot)=\frac{p(\cdot)}{p_0}
\end{equation}
  the operator $T$ is bounded in the
space $L^{p(\cdot)}_\varrho (\Om)$.
\end{theorem}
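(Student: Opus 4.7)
The plan is to derive Theorem \ref{th5.8.} from the extrapolation Theorem \ref{th4.1.} in its diagonal form, exactly as was done for the Fourier multiplier theorems and the Cauchy singular integral (Theorem \ref{th5.7.}). Specifically, I will take $p_0=q_0 \in (1,p_-)$ so that the exponent $q(x)$ defined by (\ref{s9x34}) coincides with $p(x)$, and the conclusion (\ref{new2}) becomes exactly the desired inequality $\|Tf\|_{L_\varrho^{p(\cdot)}} \le C\|f\|_{L_\varrho^{p(\cdot)}}$. The family $\mathcal{F}$ will consist of the pairs $(|Tf|, |f|)$ as $f$ ranges over a dense subclass of $L_\varrho^{p(\cdot)}(\Om)$ for which $Tf$ is a priori well defined and the left-hand side of (\ref{new1}) is finite (e.g.\ bounded functions of compact support in $\Om$).

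The hypothesis of Theorem \ref{th4.1.} that must be verified is the constant-exponent weighted estimate
\begin{equation*}
\left(\intl_\Om |Tf(x)|^{p_0} w(x)\,dx\right)^{\!1/p_0} \le c_0 \left(\intl_\Om |f(x)|^{p_0} w(x)\,dx\right)^{\!1/p_0}
\end{equation*}
for every weight $w \in A_1(\Om)$. This is the classical Coifman--Fefferman weighted norm inequality for Calder\'on--Zygmund operators: under assumptions (\ref{5.4a})--(\ref{5.4d}), the operator $T$ is bounded on $L^{p_0}_w(\rn)$ for every $1<p_0<\infty$ and every $w \in A_{p_0}$; since $A_1 \subset A_{p_0}$, the estimate above holds. (For an open subset $\Om \subset \rn$, one reduces to the whole space by extending $f$ by zero and restricting $Tf$, or invokes the local version available in the literature.)

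Once this ingredient is recorded, the remaining work is purely formal. Condition (\ref{new1bcxc54esa}) in the present setting with $q_0=p_0$ is exactly the hypothesis $\varrho^{-p_0} \in \mathfrak{A}_{(\widetilde{p})'}(\Om)$ of Theorem \ref{th5.8.}, and (\ref{vgschi}) reduces to $1<p_0<p_-$, which is assumed. Therefore Theorem \ref{th4.1.} applies to the family $\mathcal{F}$ and yields the inequality (\ref{new2}), i.e.\ $\|Tf\|_{L^{p(\cdot)}_\varrho}\le C \|f\|_{L^{p(\cdot)}_\varrho}$ for all $f$ in our dense subclass with $Tf \in L^{p(\cdot)}_\varrho$. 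A standard density argument then extends the bound to all of $L^{p(\cdot)}_\varrho(\Om)$.

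The only non-routine step is the verification, for a merely open set $\Om$, that the classical $A_1$-weighted Calder\'on--Zygmund bound is available under assumptions (\ref{5.4a})--(\ref{5.4d}). I expect this to be the main (mild) obstacle: one either cites a local version of Coifman--Fefferman for kernels satisfying standard size, smoothness, cancellation and $L^2$-boundedness conditions, or one observes that the kernel $K(x,y)\chi_\Om(x)\chi_\Om(y)$ still satisfies (\ref{5.4a})--(\ref{5.4d}) on $\rn$ and the resulting truncated operator is bounded on $L^2(\rn)$, reducing the problem to the full-space statement.
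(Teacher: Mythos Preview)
Your proposal is correct and matches the paper's approach: the paper states (just before Subsection~\ref{subs5.2.}) that all the results in these subsections are direct consequences of the extrapolation theorem together with the corresponding constant-exponent Muckenhoupt-weighted results, and for Theorem~\ref{th5.8.} it simply cites the weighted singular integral inequality of Cordoba--Fefferman~\cite{100z} (what you call Coifman--Fefferman). Your slightly more detailed discussion of the diagonal choice $p_0=q_0$, the family $\mathcal{F}$, and the reduction from $\Om$ to $\rn$ only makes explicit what the paper leaves implicit.
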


In the case of constant  $p$ and $\varrho\in A_p(\mathbb{R}^n)$,
Theorem \ref{th5.8.} was proved in \cite{100z}.

\begin{corollary}\label{cor7} Let $p\in
\mathcal{P}(\Om)\cap WL(\Om)$ and let $p(x)\equiv p_\infty=const $
outside some ball  $|x|< R$ in case $\Om$ is unbounded.  The
operator  $T$ with the kernel satisfying conditions
(\ref{5.4a})-(\ref{5.4d}) is bounded in the space
$L^{p(\cdot)}_\varrho (\Om)$ with a weight $\varrho$ of the form
\begin{equation}\label{2.4agggsw}
\varrho(x)=\prod_{k=1}^N w_k(|x-x_k|), \quad x_k\in \Om,
\end{equation}where
$ w_k\in \widetilde{U}(\mathbb{R}_+^1)$ and    $$ -\frac{1}{p(x_k)} < m(w_k)\le M(w_k) < \frac{1}{p^\prime(x_k)}
\quad \textrm{and} \quad  -\frac{n}{p_\infty}<\sum\limits_{k=1}^N m_\infty(w_k)\le \sum\limits_{k=1}^N
M_\infty(w_k) <\frac{n}{p^\prime_\infty}. $$

\end{corollary}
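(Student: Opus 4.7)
The plan is to obtain this statement as a direct consequence of Theorem \ref{th5.8.} by verifying that, under the assumed conditions on $p$ and $\varrho$, one has $\varrho^{-p_0}\in \mathfrak{A}_{(\widetilde{p})'(\cdot)}(\Om)$ for some $p_0\in(1,p_-)$. The proposed chain of implications is
$$
\varrho \text{ as in the statement}\ \Longrightarrow\ \varrho\in V^{osc}_{p(\cdot)}(\Om,\Pi)\ \Longrightarrow\ \varrho^{-p_0}\in V^{osc}_{(\widetilde{p})'(\cdot)}(\Om,\Pi)\ \Longrightarrow\ \varrho^{-p_0}\in \mathfrak{A}_{(\widetilde{p})'(\cdot)}(\Om),
$$
after which Theorem \ref{th5.8.} delivers the claim.

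For the first step I would note that $\Om\subseteq\rn$ has constant dimension in the strong sense $\mu B(x,r)\asymp r^n$, whence $\underline{\mathfrak{dim}}(\Om)=\underline{\mathfrak{dim}}_\infty(\Om)=\overline{\mathfrak{dim}}_\infty(\Om)=n$ and the correction $\Delta_{p_\infty}$ in (\ref{f27dco}) vanishes; comparing the index bounds imposed in the statement with those of Definition \ref{def2.3.} then yields $\varrho\in V^{osc}_{p(\cdot)}(\Om,\Pi)$ (the assumption $w_k\in\widetilde{U}(\mathbb{R}^1_+)$ also covers the behaviour of $w_k(\ell^2/r)$ needed for the $\infty$-part). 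The second implication is precisely Remark \ref{rem4.3.} applied with $\widetilde{p}(x)=p(x)/p_0$ for a suitable $p_0\in(1,p_-)$. For the third implication, I would invoke Theorem \ref{th3.2.} with the exponent $(\widetilde{p})'(\cdot)$ in place of $p(\cdot)$: the class $\mathcal{P}(\Om)\cap WL(\Om)$ is stable under $p\mapsto p/p_0$ and passage to the dual exponent, and $(\widetilde{p})'$ is constant outside some ball whenever $p$ is, so Theorem \ref{th3.2.} yields boundedness of $\mathcal{M}$ on $L^{(\widetilde{p})'(\cdot)}_{\varrho^{-p_0}}(\Om)$, which is the statement $\varrho^{-p_0}\in \mathfrak{A}_{(\widetilde{p})'(\cdot)}(\Om)$.

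The only technical point I foresee is the choice of $p_0$: after the transformation $\varrho\mapsto \varrho^{-p_0}$ and $p\mapsto (\widetilde{p})'$, the index bounds entering Definition \ref{def2.3.} for the transformed data shift continuously with $p_0$, and they hold strictly in the limiting case $p_0\to 1^+$ by the strict inequalities assumed on $m(w_k), M(w_k), m_\infty(w_k), M_\infty(w_k)$. Consequently they remain strict for all $p_0$ in a one-sided neighbourhood of $1$, and any such $p_0\in(1,p_-)$ is admissible for invoking Theorem \ref{th5.8.}. No further work is required, because every other hypothesis of Theorem \ref{th5.8.} is carried over verbatim from the hypotheses of the corollary.
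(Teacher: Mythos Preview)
Your approach is correct and coincides with the paper's own (implicit) argument: the paper does not write out a proof for this corollary, but the blanket remark preceding Subsection~\ref{subs5.2.} and the explicit proofs of the analogous Corollaries~\ref{cor} and~\ref{cor1} show that it is obtained exactly as you propose --- by passing from the weight assumptions to $\varrho\in V^{osc}_{p(\cdot)}(\Om,\Pi)$, invoking Remark~\ref{rem4.3.}, and then applying Theorem~\ref{th3.1.} or~\ref{th3.2.} to secure the hypothesis $\varrho^{-p_0}\in\mathfrak{A}_{(\widetilde p)'(\cdot)}(\Om)$ of Theorem~\ref{th5.8.}. The only small addition worth making is that in the bounded case you should cite Theorem~\ref{th3.1.} rather than Theorem~\ref{th3.2.}.
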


In the case of variable
 $p(\cdot)$, the statement of Corollary  \ref{cor7} was proved in
 \cite{107a} in the non-weighted case, and in
 \cite{317e} in weighted case (\ref{2.4agggsw}) for bounded sets $\Om$.

\subsection{Commutators}\label{subs5.6.}

Let us consider the commutators
$$[b,T]f(x)=b(x)Tf(x)-T(bf)(x), \quad x\in\mathbb{R}^n$$
generated by the operator (\ref{5.40}) with $\Om=\mathbb{R}^n$ and
a function $b\in BMO(\mathbb{R}^n)$.

 \begin{theorem} \label{th5.9.}
  Let the kernel  $K(x,y)$
fulfill assumptions (\ref{5.4a})-(\ref{5.4d}) and let $b\in
BMO(\mathbb{R}^n)$. Then under the  conditions
\begin{equation}\label{nefgoa4}
p \in \mathcal{P}(\mathbb{R}^n) \quad \textrm{and}\ \quad
\varrho^{-p_0}\in
 \mathfrak{A}_{(\widetilde{p})^\prime}(\mathbb{R}^n) \quad \textrm{with}
 \quad  \widetilde{p}(\cdot)=\frac{p(\cdot)}{p_0}
\end{equation}
 the commutator
$[b,T]$ is bounded in the space $L^{p(\cdot)}_\varrho
(\mathbb{R}^n)$.
\end{theorem}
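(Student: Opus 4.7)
The plan is to derive Theorem \ref{th5.9.} from the extrapolation Theorem \ref{th4.1.} in the diagonal case $p_0=q_0$, applied to the family
$$\mathcal{F}=\{(|[b,T]f|,\,|f|) : f\in S(\mathbb{R}^n)\}.$$
First I would fix any $p_0\in(1,p_-)$ and set $q_0=p_0$, so that $\gm=0$ and the auxiliary exponent $q(\cdot)$ defined by (\ref{s9x34}) reduces to $p(\cdot)$; in particular $\widetilde q=\widetilde p$, and the weight hypothesis (\ref{new1bcxc54esa}) of Theorem \ref{th4.1.} coincides with the assumption (\ref{nefgoa4}) of the present theorem. The conditions (\ref{vgschi}) are then automatic.

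Next I would verify the single inequality (\ref{new1}) required by Theorem \ref{th4.1.}: namely, for every $w\in A_1(\mathbb{R}^n)$,
$$\left(\intl_{\rn}|[b,T]f(x)|^{p_0}w(x)\,dx\right)^{\frac{1}{p_0}} \le c_0 \left(\intl_{\rn}|f(x)|^{p_0}w(x)\,dx\right)^{\frac{1}{p_0}}.$$
Since $A_1\subset A_{p_0}$, this reduces to the classical Coifman--Rochberg--Weiss theorem for commutators of Calder\'on--Zygmund operators with $BMO$ functions; under the kernel conditions (\ref{5.4a})--(\ref{5.4d}) the estimate holds with a constant depending only on $\|b\|_{BMO}$, the $L^2$-bound of $T$, and the $A_{p_0}$-constant of $w$. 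This is exactly the role played in the preceding theorems by the cited boundedness results for constant exponent with Muckenhoupt weights.

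With (\ref{new1}) and (\ref{new1bcxc54esa}) in hand, Theorem \ref{th4.1.} yields $\|[b,T]f\|_{L^{p(\cdot)}_\varrho}\le C\|f\|_{L^{p(\cdot)}_\varrho}$ for every $f\in S(\mathbb{R}^n)$, and a standard density argument extends $[b,T]$ to a bounded operator on all of $L^{p(\cdot)}_\varrho(\mathbb{R}^n)$. The only subtlety I foresee is the a priori finiteness of the left-hand side demanded by Theorem \ref{th4.1.}: I would circumvent it by restricting the family $\mathcal{F}$ to Schwartz functions (or truncations of $L^{p(\cdot)}_\varrho$-functions), for which $[b,T]f$ is known to be locally integrable and to satisfy the required integrability, and then pass to the full space by density. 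This is the main technical point, but it is routine once the kernel conditions are in force.
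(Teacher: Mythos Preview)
Your proposal is correct and follows essentially the same route as the paper: apply the extrapolation Theorem \ref{th4.1.} in the diagonal case $p_0=q_0$, using as input the classical weighted $L^{p_0}$-boundedness of $[b,T]$ for $A_{p_0}$ (hence $A_1$) weights. The paper attributes the constant-exponent weighted input to P\'erez \cite{479zz} rather than to Coifman--Rochberg--Weiss, and it does not spell out the density/finiteness point you raise, but the argument is the same.
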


In the case of constant  $p$ and  $\varrho\in A_p(\mathbb{R}^n),
1<p<\infty$, Theorem \ref{th5.9.} was proved in  \cite{479zz}. In
the case of variable  $p(\cdot)$, the non-weighted case of Theorem
\ref{th5.9.} was proved in  \cite{299c} under the assumption that
$1\in \mathfrak{A}_{p(\cdot)}(\mathbb{R}^n)$.

\begin{corollary}\label{cor8}
 Let the kernel $K(x,y)$ fulfill
conditions (\ref{5.4a})-(\ref{5.4d}) and let $b\in
BMO(\mathbb{R}^n)$. Then the commutator $[b,T]$ is bounded in the
space $L^{p(\cdot)}_\varrho (\mathbb{R}^n)$ if\\
i) \  $p\in \mathcal{P}(\mathbb{R}^n)\cap WL(\mathbb{R}^n)$ and
$p(x)\equiv
p_\infty=const $ outside some ball  $|x|< R$, \\
2) the weight $\varrho$ has  the form
\begin{equation}\label{2.4agggsw}
\varrho(x)=w_0(1+|x|)\prod_{k=1}^N w_k(|x-x_k|), \quad x_k\in
\mathbb{R}^n,
\end{equation} with the factors $w_k, \ k=0,1,...,N,$ satisfying conditions (\ref{2.6bvc})-(\ref{f27dcobvc}).
\end{corollary}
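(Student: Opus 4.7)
The plan is to reduce Corollary 8 to Theorem \ref{th5.9.} by verifying, under the hypotheses (i)--(ii) on $p$ and $\varrho$, that the abstract Muckenhoupt-type condition $\varrho^{-p_0}\in\mathfrak{A}_{(\widetilde p)^\prime}(\mathbb{R}^n)$ holds for some admissible $p_0\in(1,p_-)$. This is the same schema already applied in Corollaries \ref{cor}, \ref{cor6} and \ref{cor7}, so the novelty here is only in checking that the combined conditions at the finite points $x_k$ and at infinity produced by Remark \ref{rem4.3.} are still compatible with the hypothesis $p_0>1$ close to $p_-$.

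First I would observe that hypothesis (ii) is exactly the statement $\varrho\in V^{osc}_{p(\cdot)}(\mathbb{R}^n,\Pi)$ in the Euclidean formulation (\ref{2.6bvc})--(\ref{f27dcobvc}), since $\underline{\mathfrak{dim}}(\mathbb{R}^n)=\underline{\mathfrak{dim}}_\infty(\mathbb{R}^n)=\overline{\mathfrak{dim}}_\infty(\mathbb{R}^n)=n$. Next I would pick $p_0\in(1,p_-)$ sufficiently close to $1$ and invoke Remark \ref{rem4.3.} to obtain
$$\varrho^{-p_0}\in V^{osc}_{(\widetilde p)^\prime(\cdot)}(\mathbb{R}^n,\Pi),\qquad \widetilde p(x)=\frac{p(x)}{p_0}.$$
The point is that the indices $m(w_k^{-p_0})=-p_0 M(w_k)$ and $M(w_k^{-p_0})=-p_0 m(w_k)$ then satisfy the inequalities of Definition \ref{def2.3.} written for the exponent $(\widetilde p)^\prime(\cdot)$, as guaranteed by that remark.

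Having placed $\varrho^{-p_0}$ in the class $V^{osc}_{(\widetilde p)^\prime(\cdot)}(\mathbb{R}^n,\Pi)$, and noting that hypothesis (i) implies $(\widetilde p)^\prime\in \mathcal{P}(\mathbb{R}^n)\cap WL(\mathbb{R}^n)$ with $(\widetilde p)^\prime(x)\equiv(\widetilde p_\infty)^\prime$ for $|x|\ge R$, Theorem \ref{th3.2.} applies and yields the boundedness of $\mathcal{M}$ in $L^{(\widetilde p)^\prime(\cdot)}_{\varrho^{-p_0}}(\mathbb{R}^n)$. This is precisely the definition of $\varrho^{-p_0}\in\mathfrak{A}_{(\widetilde p)^\prime}(\mathbb{R}^n)$, so assumption (\ref{nefgoa4}) of Theorem \ref{th5.9.} is fulfilled, and the boundedness of $[b,T]$ in $L^{p(\cdot)}_\varrho(\mathbb{R}^n)$ follows.

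The only delicate point I anticipate is a bookkeeping one: one must check that the closeness of $p_0$ to $1$ (needed so that $p_0<p_-$) is consistent with the strict inequalities prescribed by Remark \ref{rem4.3.} on the exponents at the singular points $x_k$ and at $\infty$. Since those inequalities are strict and the passage $p\mapsto\widetilde p$ does not shrink the admissible band for the $m(w_k),M(w_k)$ when $p_0$ is close to $1$, this can always be arranged by choosing $p_0$ sufficiently close to $1$; this is the same mechanism underlying the implication in Remark \ref{rem4.3.} and requires no new computation.
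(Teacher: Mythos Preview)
Your proposal is correct and follows exactly the approach the paper uses (implicitly here, and explicitly in the proofs of Corollaries \ref{cor} and \ref{cor1}): identify hypothesis (ii) with $\varrho\in V^{osc}_{p(\cdot)}(\mathbb{R}^n,\Pi)$, apply Remark \ref{rem4.3.} to obtain $\varrho^{-p_0}\in V^{osc}_{(\widetilde p)^\prime(\cdot)}(\mathbb{R}^n,\Pi)$, invoke Theorem \ref{th3.2.} to conclude $\varrho^{-p_0}\in\mathfrak{A}_{(\widetilde p)^\prime}(\mathbb{R}^n)$, and finish with Theorem \ref{th5.9.}. One minor simplification: since Remark \ref{rem4.3.} asserts the implication for \emph{every} $p_0\in(1,p_-)$, your discussion about choosing $p_0$ sufficiently close to $1$ (or to $p_-$) is unnecessary---any admissible $p_0$ works directly.
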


\subsection{Pseudo-differential operators} \label{subs5.8.}

We consider a pseudo-differential operator $\sg(x,D)$  defined by
$$\sg(x,D) f(x)=\intl_{\mathbb{R}^n}\sg(x,\xi)e^{2\pi i(x,\xi)}\hat{f}(\xi)\,d\xi.$$

 \begin{theorem} \label{th5.11.}  Let the symbol $\sg(x,\xi)$
 satisfy the condition
$$
\left|\partial^\al_\xi\partial_x^\bt \sg(x,\xi)\right|\le
c_{\al\bt}(1+|\xi|)^{-|\al|}
$$
for all the multiindices  $\al$ and $\bt$. Then under condition
(\ref{nefgoa4}) the operator $\sg(x,D)$ admits a continuous
extension to the space $L^{p(\cdot)}_\varrho (\mathbb{R}^n)$.
\end{theorem}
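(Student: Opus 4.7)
The plan is to obtain Theorem \ref{th5.11.} by exactly the mechanism already used in this section for Theorems \ref{th5.1.}, \ref{th5.2.} and \ref{th5.8.}, namely, by reducing matters to the classical weighted $L^{p_0}$-boundedness of $\sigma(x,D)$ with Muckenhoupt weights and invoking the extrapolation Theorem \ref{th4.1.}. I would take $\Om=X=\rn$, fix some $p_0\in(1,p_-)$, set $q_0=p_0$, and consider the family
$$\mathcal{F}=\{(\sg(x,D)g,\,g):\ g\in S(\rn)\}.$$
With this choice condition (\ref{vgschi}) becomes simply $p_0\in(1,p_-)$, and with $q(x)\equiv p(x)$ the hypothesis (\ref{nefgoa4}) is exactly the assumption $\varrho^{-p_0}\in\mathfrak{A}_{(\widetilde{p})^\prime}(\rn)$ required by Theorem \ref{th4.1.}.

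The key step is to verify inequality (\ref{new1}) in its diagonal form
$$\|\sg(x,D)g\|_{L^{p_0}(\rn;\,w\,dx)} \le c_0\,\|g\|_{L^{p_0}(\rn;\,w\,dx)},\qquad w\in A_1(\rn).$$
The hypothesis imposed on the symbol is precisely that $\sg\in S^0_{1,0}$, and it is a classical fact that every $S^0_{1,0}$-pseudo-differential operator is bounded on $L^{p_0}(\rn;\,w\,dx)$ for every $p_0\in(1,\infty)$ and every Muckenhoupt weight $w\in A_{p_0}$, with norm depending only on $p_0$ and the $A_{p_0}$-constant of $w$ (Miller's theorem and its subsequent extensions in the pseudo-differential literature). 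Since $A_1\sbs A_{p_0}$, the required estimate for arbitrary $w\in A_1$ is an immediate consequence.

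Once (\ref{new1}) is in hand, Theorem \ref{th4.1.} applied with $q_0=p_0$ yields
$$\|\sg(x,D)g\|_{L^{p(\cdot)}_\varrho}\le C\,\|g\|_{L^{p(\cdot)}_\varrho}$$
for every $g\in S(\rn)$ such that the left-hand side is finite. A standard density argument --- using that $S(\rn)$ is dense in $L^{p(\cdot)}_\varrho(\rn)$ under (\ref{nefgoa4}), since $1<p_-\le p_+<\infty$ and $\varrho^{-p_0}\in\mathfrak{A}_{(\widetilde p)^\prime}$ guarantees the local integrability needed for approximation --- then gives the continuous extension of $\sg(x,D)$ to the whole of $L^{p(\cdot)}_\varrho(\rn)$.

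The one genuine point requiring care, and the only potential obstacle, is bookkeeping at the two ends of the argument: pinning down a reference for $L^{p_0}$ boundedness of $S^0_{1,0}$ symbols with $A_{p_0}$ weights in the generality needed (so that $A_1$-boundedness holds on all of $S(\rn)$, not merely modulo some dense subclass), and confirming Schwartz-density in $L^{p(\cdot)}_\varrho(\rn)$ under hypothesis (\ref{nefgoa4}). Both are available in the literature, so no new argument beyond citation is needed, and the proof reduces to one-line invocation of Theorem \ref{th4.1.} in the same style as the proofs of Theorems \ref{th5.1.}--\ref{th5.8.}.
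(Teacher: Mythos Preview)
Your proposal is correct and follows exactly the paper's approach: the paper proves Theorem \ref{th5.11.} as a direct consequence of the extrapolation Theorem \ref{th4.1.} (with $q_0=p_0$), citing Miller \cite{404a} for the constant-exponent $A_{p_0}$-weighted boundedness of $S^0_{1,0}$ pseudo-differential operators. Your choice of family $\mathcal{F}$, the reduction via $A_1\subset A_{p_0}$, and the concluding density argument are precisely what the paper's one-line proof scheme intends.
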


 \vspace{4mm}
In the case of constant  $p$ and $\varrho\in A_p$ Theorem
\ref{th5.11.} was proved in  \cite{404a}.

\begin{corollary}\label{cor9}
Let $p\in \mathcal{P}(\mathbb{R}^n)\cap WL(\mathbb{R}^n)$ and
$p(x)\equiv p_\infty=const $ outside some ball  $|x|< R$ and let
$\varrho \in
 V^{osc}_{p(\cdot)}(\mathbb{R}^n,\Pi)$.
\end{corollary}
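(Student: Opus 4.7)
The stated corollary follows the same template as Corollaries \ref{cor}, \ref{cor1}, \ref{cor7} and \ref{cor8}: the goal is to show that the hypotheses on $p$ and $\varrho$ are sufficient to imply condition (\ref{nefgoa4}) of Theorem \ref{th5.11.}, after which Theorem \ref{th5.11.} gives the boundedness of $\sg(x,D)$ in $L^{p(\cdot)}_\varrho(\mathbb{R}^n)$. So the plan is to verify that for some $p_0\in (1,p_-)$ one has
$$\varrho^{-p_0}\in \mathfrak{A}_{(\widetilde{p})^\prime}(\mathbb{R}^n), \qquad \widetilde{p}(\cdot)=\frac{p(\cdot)}{p_0}.$$

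The first step is to pick any $p_0\in (1,p_-)$ and invoke Remark \ref{rem4.3.} to conclude that
$$\varrho^{-p_0}\in V^{osc}_{(\widetilde{p})^\prime(\cdot)}(\mathbb{R}^n,\Pi).$$
The second step is to check that $(\widetilde{p})^\prime$ inherits the assumptions needed for Theorem \ref{th3.2.}: since $p\in \mathcal{P}(\mathbb{R}^n)\cap WL(\mathbb{R}^n)$ and $p(x)\equiv p_\infty$ outside $|x|<R$, and since $p_0<p_-$ gives $1<(\widetilde{p})^\prime_-\le (\widetilde{p})^\prime_+<\infty$, the exponent $(\widetilde{p})^\prime$ again belongs to $\mathcal{P}(\mathbb{R}^n)\cap WL(\mathbb{R}^n)$ (the map $t\mapsto t/(t-1)$ is Lipschitz on the range of $\widetilde{p}$, so logarithmic continuity is preserved) and is identically $(p_\infty/p_0)^\prime$ outside $|x|<R$.

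Now apply Theorem \ref{th3.2.} with the exponent $(\widetilde{p})^\prime$ and the weight $\varrho^{-p_0}$: the maximal operator $\mathcal{M}$ is bounded in $L^{(\widetilde{p})^\prime(\cdot)}_{\varrho^{-p_0}}(\mathbb{R}^n)$, which by Definition \ref{def1.1} is precisely the statement $\varrho^{-p_0}\in \mathfrak{A}_{(\widetilde{p})^\prime}(\mathbb{R}^n)$. Combined with $p\in \mathcal{P}(\mathbb{R}^n)$, this yields condition (\ref{nefgoa4}), and Theorem \ref{th5.11.} concludes the continuous extension of $\sg(x,D)$ to $L^{p(\cdot)}_\varrho(\mathbb{R}^n)$.

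No single step here is a serious obstacle, since every ingredient has been developed earlier in the paper; the only thing requiring a moment of care is verifying that the conjugate exponent $(\widetilde{p})^\prime$ still satisfies $\mathcal{P}\cap WL$ and the constancy-at-infinity condition needed to feed into Theorem \ref{th3.2.}, but these are immediate from $p_0<p_-$ and from the preservation of $WL$ under smooth composition away from the critical values.
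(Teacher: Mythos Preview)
Your proposal is correct and follows essentially the same route as the paper: the paper does not give an explicit proof for Corollary~\ref{cor9}, but the general remark preceding Subsection~\ref{subs5.2.} and the proofs of Corollaries~\ref{cor} and~\ref{cor1} make clear that one verifies condition~(\ref{nefgoa4}) via Remark~\ref{rem4.3.} and Theorem~\ref{th3.2.}, exactly as you do. Your write-up simply makes explicit the routine check that $(\widetilde{p})^\prime$ inherits the $\mathcal{P}\cap WL$ and constancy-at-infinity hypotheses, which the paper leaves implicit.
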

For variable $p(\cdot)$ the statement of Corollary \ref{cor9} by a
different method was proved in the non-weighted case in
\cite{503a}.

\subsection{Feffermann-Stein function} \label{subs5.7.}

Let $f$  be a measurable locally integrable function  on
$\mathbb{R}^n$, $B$ an arbitrary ball in $\mathbb{R}^n$, \
$f_B=\frac{1}{|B|}\intl_{B}f(x)\,dx$ and
$$\mathcal{M}^\# f(x)=\sup\limits_{B\in X}\frac{1}{|B|}\intl_{B}|f(x)-f_B|\,dx $$
be  the Fefferman-Stein maximal function.

 \begin{theorem}  \label{th5.10.} Under condition
(\ref{nefgoa4}), the inequality
\begin{equation}\label{5.5}
\|\mathcal{M}f\|_{L^{p(\cdot)}_\varrho (\mathbb{R}^n)}\le C
\|\mathcal{M}^\# f\|_{L^{p(\cdot)}_\varrho (\mathbb{R}^n)}
\end{equation}
is valid, where $C>0$ does not depend on  $f$.
\end{theorem}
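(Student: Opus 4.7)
The plan is to apply the extrapolation Theorem \ref{th4.1.} in its diagonal form, namely taking $q_0=p_0$, to the family $\mathcal{F}$ of all pairs $(\mathcal{M}f,\mathcal{M}^\#f)$ where $f$ ranges over, say, bounded functions with compact support in $\mathbb{R}^n$. Fix any $p_0\in (1,p_-)$ supplied by hypothesis (\ref{nefgoa4}). Observe that with $q_0=p_0$ the technical requirements (\ref{vgschi}) of Theorem \ref{th4.1.} reduce to $p_0<p_-$ (given) and $1/p_0-1/p_+<1/p_0$ (trivial), and the exponent $q(x)$ defined in (\ref{s9x34}) coincides with $p(x)$, so that (\ref{new1bcxc54esa}) is exactly (\ref{nefgoa4}).

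The decisive input is the classical weighted Fefferman--Stein inequality for constant exponent: for every $p_0\in (1,\infty)$ and every $w\in A_1$ (actually every $w\in A_\infty$) there holds
\begin{equation*}
\int_{\mathbb{R}^n}(\mathcal{M}f(x))^{p_0}\,w(x)\,dx \le c_0 \int_{\mathbb{R}^n}(\mathcal{M}^\# f(x))^{p_0}\,w(x)\,dx,
\end{equation*}
with $c_0$ depending only on $p_0$ and the $A_1$-constant of $w$, provided the left-hand side is finite. This is precisely inequality (\ref{new1}) for our family $\mathcal{F}$ when $p_0=q_0$, since then $w^{p_0/q_0}=w$.

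Invoking Theorem \ref{th4.1.} with this family and these parameters immediately yields
\begin{equation*}
\|\mathcal{M}f\|_{L^{p(\cdot)}_\varrho(\mathbb{R}^n)} \le C\,\|\mathcal{M}^\# f\|_{L^{p(\cdot)}_\varrho(\mathbb{R}^n)},
\end{equation*}
which is (\ref{5.5}). The only point requiring attention is the convention that (\ref{new1}) is imposed only for those pairs whose left-hand side is finite; this is respected by first establishing (\ref{5.5}) for truncated or bounded compactly-supported $f$ and then passing to general $f\in L^{p(\cdot)}_\varrho$ by a standard approximation/monotone-convergence argument, combined with the fact that $\mathcal{M}f<\infty$ almost everywhere whenever $f$ is locally integrable.

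The main obstacle is not analytical but bookkeeping: one must identify precisely which classical weighted Fefferman--Stein statement is needed (the $A_1$-version suffices since Theorem \ref{th4.1.} quantifies only over $A_1$ weights, even though the sharper known results are formulated for $A_\infty$) and verify that the trivial diagonal specialization of the extrapolation theorem inherits the correct dependence on the $A_1$-constant. Once these matches are made, the deduction is immediate and parallels the pattern used throughout Section \ref{subs5.} for the other operators.
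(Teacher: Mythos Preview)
Your proposal is correct and matches the paper's approach exactly: the paper does not write out a separate proof for this theorem but explains (at the end of Subsection \ref{subs5.1.}) that all subsequent statements are direct consequences of the extrapolation Theorem \ref{th4.1.} together with the corresponding constant-exponent weighted results, and for Theorem \ref{th5.10.} it cites precisely the Fefferman--Stein inequality from \cite{160c} as that input. Your identification of the diagonal case $q_0=p_0$, the verification that (\ref{new1bcxc54esa}) then reduces to (\ref{nefgoa4}), and the care about the finiteness convention are all in order.
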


In the case of constant  $p$ and $\varrho\in A_p$ inequality
(\ref{5.5}) was proved in \cite{160c}.

\begin{corollary}\label{cor10} Inequality (\ref{5.5}) is valid under the conditions:
\\
i) $p\in \mathcal{P}(\mathbb{R}^n)\cap WL(\mathbb{R}^n)$ and
$p(x)\equiv p_\infty=const $ outside some ball   $|x|< R$, \\
ii)   $\varrho \in
 V^{osc}_{p(\cdot)}(\mathbb{R}^n,\Pi)$.
\end{corollary}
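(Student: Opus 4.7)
The plan is to reduce Corollary \ref{cor10} directly to Theorem \ref{th5.10.} by showing that assumptions (i) and (ii) together imply the single hypothesis (\ref{nefgoa4}) of that theorem, namely $\varrho^{-p_0}\in\mathfrak{A}_{(\widetilde{p})'}(\mathbb{R}^n)$ for some $p_0\in(1,p_-)$, with $\widetilde{p}(\cdot)=p(\cdot)/p_0$. Once this verification is in place, inequality (\ref{5.5}) follows at once for every $f\in L_\varrho^{p(\cdot)}(\mathbb{R}^n)$.

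To carry out the reduction, I would fix an arbitrary $p_0\in(1,p_-)$ and first invoke Remark \ref{rem4.3.}, which turns the assumption $\varrho\in V^{osc}_{p(\cdot)}(\mathbb{R}^n,\Pi)$ from (ii) into $\varrho^{-p_0}\in V^{osc}_{(\widetilde{p})'(\cdot)}(\mathbb{R}^n,\Pi)$. Next I would check that the conjugate variable exponent $(\widetilde{p})'(x)=p(x)/(p(x)-p_0)$ inherits the two regularity conditions from (i): the bounds $1<((\widetilde{p})')_-\le((\widetilde{p})')_+<\infty$ follow from $p_0<p_-\le p_+<\infty$; the weak Lipschitz estimate (\ref{1.3}) passes through because the map $t\mapsto t/(t-p_0)$ is Lipschitz on the bounded range $[p_-,p_+]\subset(p_0,\infty)$; and the constancy $p(x)\equiv p_\infty$ outside the ball $|x|<R$ transfers directly. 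Theorem \ref{th3.2.} then applies to the pair $\bigl((\widetilde{p})'(\cdot),\varrho^{-p_0}\bigr)$ on the doubling metric measure space $\mathbb{R}^n$ and yields boundedness of $\mathcal{M}$ in $L^{(\widetilde{p})'(\cdot)}_{\varrho^{-p_0}}(\mathbb{R}^n)$, which is by definition the membership $\varrho^{-p_0}\in\mathfrak{A}_{(\widetilde{p})'}(\mathbb{R}^n)$. Hypothesis (\ref{nefgoa4}) of Theorem \ref{th5.10.} is therefore satisfied and (\ref{5.5}) follows.

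No substantive obstacle is anticipated; the corollary is a straightforward packaging of Theorem \ref{th5.10.} through two ingredients already established earlier, namely Remark \ref{rem4.3.} and Theorem \ref{th3.2.}. The only point that requires a modicum of attention is the bookkeeping hidden inside Remark \ref{rem4.3.}: one has to verify that the Matuzewska--Orlicz-type indices $m(w_k)$, $M(w_k)$, $m_\infty(w_k)$, $M_\infty(w_k)$ entering the defining inequalities (\ref{2.6}) and (\ref{f27dco}) of $V^{osc}_{p(\cdot)}$ transform correctly under the substitutions $\varrho\mapsto\varrho^{-p_0}$ and $p(\cdot)\mapsto(\widetilde{p})'(\cdot)$. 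Since this transformation (indices multiplied by $-p_0$, combined with the identity $1/(\widetilde{p})'(x)=1-p_0/p(x)$) is precisely what that remark encapsulates, no independent computation is needed and the proof is essentially a chain of three citations.
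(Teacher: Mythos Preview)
Your proposal is correct and follows essentially the same approach as the paper: the analogous Corollaries \ref{cor} and \ref{cor1} are proved precisely by invoking Remark \ref{rem4.3.} and Theorem \ref{th3.2.} to verify the weight condition of the corresponding theorem, and the paper explicitly remarks (just before Subsection \ref{subs5.2.}) that all subsequent corollaries, including this one, are obtained in the same manner. Your added verification that $(\widetilde{p})'$ inherits the $\mathcal{P}\cap WL$ and constancy-at-infinity conditions is a welcome spelling-out of what the paper leaves implicit.
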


\subsection{Vector-valued operators}\label{subs5.9.}

Let $f=(f_1,\cdots,f_k, \cdots)$, where
$f_i:\mathbb{R}^n\to\mathbb{R}^1$  are locally integrable
functions.

 \begin{theorem} \label{th5.12.} Let $0<\theta<\infty$. Under
 conditions (\ref{nefgoa4}), the inequality
\begin{equation}\label{x}
\left\|\left(\sum\limits_{j=1}^\infty(\mathcal{M}
f_j)^\theta\right)^\frac{1}{\theta}\right\|_{L^{p(\cdot)}_\varrho(\mathbb{R}^n)}
\le C
\left\|\left(\sum\limits_{j=1}^\infty|f_j|^\theta\right)^\frac{1}{\theta}\right\|
_{L^{p(\cdot)}_\varrho(\mathbb{R}^n)}
\end{equation}
 is valid,
where $c>0$ does not depend on  $f$.
\end{theorem}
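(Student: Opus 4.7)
The plan is to realize inequality (\ref{x}) as an immediate consequence of the extrapolation Theorem \ref{th4.1.} in its diagonal form, that is, with $p_0=q_0$ (so that $\gamma=0$ and $q(x)\equiv p(x)$).

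First I would choose the family $\mathcal{F}$ of pairs $(F,G)$ with
\[
F=\left(\sum_{j=1}^\infty(\mathcal{M}f_j)^\theta\right)^{1/\theta},\qquad G=\left(\sum_{j=1}^\infty|f_j|^\theta\right)^{1/\theta},
\]
taken over all sequences $(f_j)$ of locally integrable functions for which the left-hand side of (\ref{x}) is finite. The hypothesis (\ref{new1}) of Theorem \ref{th4.1.} then reduces to the classical \emph{weighted} Fefferman--Stein vector-valued maximal inequality: for every $1<p_0<\infty$, every $0<\theta<\infty$ and every weight $w\in A_{p_0}(\mathbb{R}^n)$,
\[
\left(\int_{\mathbb{R}^n}F^{p_0}(x)\,w(x)\,dx\right)^{1/p_0}\le c_0\left(\int_{\mathbb{R}^n}G^{p_0}(x)\,w(x)\,dx\right)^{1/p_0},
\]
a fact which in the range $\theta>1$ is the theorem of Andersen--John and which extends to the full range $0<\theta<\infty$ for $A_{p_0}$ weights by the standard linearization/duality argument of Grafakos--Martell. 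Since $A_1\subset A_{p_0}$ for every $p_0>1$, this inequality a fortiori holds for all $w\in A_1(\mathbb{R}^n)$, which is exactly the hypothesis (\ref{new1}) of Theorem \ref{th4.1.} in the diagonal case $p_0=q_0$.

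Next I would verify that condition (\ref{new1bcxc54esa}) of Theorem \ref{th4.1.} is nothing but our assumption (\ref{nefgoa4}), and that the defining relation (\ref{s9x34}) with $p_0=q_0$ forces $q(x)\equiv p(x)$. Applying Theorem \ref{th4.1.} to the family $\mathcal{F}$ then yields
\[
\|F\|_{L^{p(\cdot)}_\varrho(\mathbb{R}^n)}\le C\,\|G\|_{L^{p(\cdot)}_\varrho(\mathbb{R}^n)},
\]
which is precisely (\ref{x}).

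The only non-formal step is the verification that the scalar weighted inequality holds on the whole range $0<\theta<\infty$ and for every $w\in A_1$; for $\theta>1$ this is standard, while for $\theta\le 1$ one cites the argument based on duality of tent spaces (or, equivalently, on the fact that $\mathcal{M}$ is bounded on $L^{p_0/\theta}(\ell^1,w)$ for $w\in A_{p_0/\theta}$, which contains $A_1$). This is the only place where anything beyond a mechanical application of the extrapolation machinery is needed; once it is in hand, Theorem \ref{th4.1.} does the rest automatically.
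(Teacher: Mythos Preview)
Your overall strategy is exactly the paper's: apply the diagonal case $p_0=q_0$ of Theorem~\ref{th4.1.} to the family $\mathcal{F}=\{(F,G)\}$ with $F=\big(\sum(\mathcal{M}f_j)^\theta\big)^{1/\theta}$ and $G=\big(\sum|f_j|^\theta\big)^{1/\theta}$, the required input (\ref{new1}) being supplied by the weighted Fefferman--Stein inequality for constant exponents (Andersen--John \cite{20a}; see also \cite{316zz}, \cite{316zza}, \cite{316zzb}). For $1<\theta<\infty$ this is correct and complete.

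The gap is in your treatment of $0<\theta\le 1$. The assertion that $\mathcal{M}$ is bounded on $L^{p_0/\theta}(\ell^1,w)$ is \emph{false}: this is precisely the $\ell^1$ endpoint of the Fefferman--Stein inequality, which fails already in the unweighted case. A concrete obstruction: with $f_j=\chi_{[j,j+1]}$, $j=0,\dots,N-1$, one has $\big(\sum|f_j|^\theta\big)^{1/\theta}=\chi_{[0,N]}$, while for $x\in[0,N]$ one has $\sum_j(\mathcal{M}f_j(x))^\theta\approx\sum_j(|x-j|+1)^{-\theta}\approx N^{1-\theta}$ when $\theta<1$ (and $\approx\log N$ when $\theta=1$), so the ratio of the two $L^{p_0}$-norms blows up. Thus neither the unweighted nor the $A_1$-weighted scalar hypothesis (\ref{new1}) is available for $\theta\le 1$, and no ``tent space duality'' rescues it. The range $0<\theta<\infty$ in the statement should be read as $1<\theta<\infty$; the references the paper cites for the constant-$p$ case establish the inequality only in that range, and your extrapolation argument is valid precisely there.
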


\vspace{5mm} In the case of  constant  $p$ and $\varrho\in A_p$
weighted inequalities for vector-valued functions were proved in
\cite{316zz}, \cite{316zza}, \cite{316zzb},  see also \cite{20a}.

\begin{corollary}\label{cor11} Inequality (\ref{x}) is valid under the conditions
\\
i) $p\in \mathcal{P}(\mathbb{R}^n)\cap WL(\mathbb{R}^n)$ and
$p(x)\equiv p_\infty=const $ outside some ball  $|x|< R$,\\
ii) $\varrho \in
 V^{osc}_{p(\cdot)}(\Om,\Pi)$.
\end{corollary}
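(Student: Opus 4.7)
The plan is to show that hypotheses i) and ii) of the corollary imply condition (\ref{nefgoa4}) of Theorem \ref{th5.12.}, so that inequality (\ref{x}) follows immediately from that theorem.

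First, hypothesis i) gives $p\in \mathcal{P}(\mathbb{R}^n)$ directly, which is the first half of (\ref{nefgoa4}). To produce the required $p_0$, pick any $p_0 \in (1, p_-)$ and set $\widetilde{p}(x)=p(x)/p_0$. The properties of $p$ listed in i) transfer to $\widetilde{p}$ and hence to its conjugate $(\widetilde{p})^\prime$: the bounds $1<p_-\le p_+<\infty$ carry over (using $p_0<p_-$), the weak-Lipschitz condition (\ref{1.3}) is preserved under division by a constant and under passage to the conjugate exponent, and the property of being constant outside some ball is obviously preserved. Thus $(\widetilde{p})^\prime \in \mathcal{P}(\mathbb{R}^n)\cap WL(\mathbb{R}^n)$ and $(\widetilde{p})^\prime(x)\equiv (\widetilde{p}_\infty)^\prime=\textrm{const}$ for $|x|>R$.

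Next, by hypothesis ii) we have $\varrho\in V^{osc}_{p(\cdot)}(\mathbb{R}^n,\Pi)$, so Remark \ref{rem4.3.} applied to the chosen $p_0\in (1,p_-)$ yields
$$\varrho^{-p_0}\in V^{osc}_{(\widetilde{p})^\prime(\cdot)}(\mathbb{R}^n,\Pi).$$
We are now in position to invoke Theorem \ref{th3.2.} with the exponent $(\widetilde{p})^\prime$ and the weight $\varrho^{-p_0}$ in place of $p$ and $\varrho$: all three hypotheses of that theorem are satisfied in view of the preceding paragraph. The conclusion of Theorem \ref{th3.2.} is precisely that the maximal operator $\mathcal{M}$ is bounded on $L^{(\widetilde{p})^\prime(\cdot)}_{\varrho^{-p_0}}(\mathbb{R}^n)$, which by Definition \ref{def1.1} means
$$\varrho^{-p_0}\in \mathfrak{A}_{(\widetilde{p})^\prime}(\mathbb{R}^n).$$

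This establishes the second half of (\ref{nefgoa4}). Hence the full hypothesis of Theorem \ref{th5.12.} holds, and inequality (\ref{x}) follows. The only nontrivial step is the verification that ii) implies the Muckenhoupt-type condition on $\varrho^{-p_0}$ via Remark \ref{rem4.3.} and Theorem \ref{th3.2.}; the rest is bookkeeping about preservation of $\mathcal{P}\cap WL$ under the transformations $p\mapsto \widetilde{p}\mapsto (\widetilde{p})^\prime$.
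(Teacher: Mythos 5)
Your proposal is correct and follows essentially the same route as the paper: the paper disposes of all such corollaries by the blanket remark before Subsection \ref{subs5.2.} and the pattern of Corollary \ref{cor}, namely verifying the hypothesis $\varrho^{-p_0}\in\mathfrak{A}_{(\widetilde{p})^\prime}$ of the corresponding theorem (here (\ref{nefgoa4}) in Theorem \ref{th5.12.}) via Remark \ref{rem4.3.} combined with Theorem \ref{th3.2.}, exactly as you do. Your explicit check that $\mathcal{P}\cap WL$ and constancy at infinity are preserved under $p\mapsto\widetilde{p}\mapsto(\widetilde{p})^\prime$ is a detail the paper leaves implicit but is correct.
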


\begin{remark}The corresponding statements for vector-valued
operators are also similarly derived from Theorem \ref{th4.2.} in
the case of singular integrals, commutators, Feffermann-Stein
maximal function, Fourier-multipliers, etc.
\end{remark}

\vspace{10mm}   This work was made under the project "Variable
Exponent Analysis" supported by INTAS grant
 Nr.06-1000017-8792. The first author was also  supported by Center CEMAT, Instituto Superior
T\'ecnico, Lisbon, Portugal, during his visit to Portugal,
November 29 -  December 2006.



\begin{thebibliography}{10}

\bibitem{9b}
E. Acerbi and G.Mingione
\newblock Regularity results for a class of functionals with
              non-standard growth.
\newblock {\em Arch. Ration. Mech. Anal.}, 156(2):121--140, 2001.


\bibitem{9d}
E. Acerbi and G.Mingione
\newblock Regularity results for staionary  electrorheological fluids.
\newblock {\em Arch. Ration. Mech. Anal.}, 164(3): 213--259, 2002.


\bibitem{20a}
K.~F. Andersen and R.~T. John.
\newblock Weighted inequalities for vector-valued maximal functions and
  singular integrals.
\newblock {\em Studia Math.}, 69(1):19--31, 1980/81.

\bibitem{46}
N.K. Bary and S.B. Stechkin.
\newblock Best approximations and differential properties of two conjugate
  functions (in {Russian}).
\newblock {\em Proceedings of Moscow Math. Soc.}, 5:483--522, 1956.

\bibitem{63}
A.~B\"ottcher and Yu. Karlovich.
\newblock {\em Carleson {Curves}, {Muckenhoupt} {Weights}, and {Toeplitz}
  {Operators}}.
\newblock Basel, {Boston}, {Berlin}: {Birkh\"auser} Verlag, 1997.
\newblock 397 pages.

\bibitem{72b}
A.-P. Calder{\'o}n.
\newblock Inequalities for the maximal function relative to a metric.
\newblock {\em Studia Math.}, 57(3):297--306, 1976.

\bibitem{97}
R.R. Coifman and G.~Weiss.
\newblock {\em Analyse harmonique non-commutative sur certaines espaces
  homegenes}, volume 242.
\newblock Lecture Notes Math., 1971.
\newblock 160 pages.

\bibitem{97a}
R.R. Coifman and G.~Weiss.
\newblock Extensions of {H}ardy spaces and their use in analysis, 1977.
\newblock {\em Bull. Amer. Math. Soc.}, 83(4):569--645, 1977.




\bibitem{100z}
A.~Cordoba and C.~Fefferman.
\newblock A weighted norm inequality for singular integrals.
\newblock {\em Studia Math.}, 57(1):97--101, 1976.

\bibitem{101zb}
D.~Cruz-Uribe, A.~Fiorenza, J.M. Martell, and C~Perez.
\newblock The boundedness of classical operators on variable {$L\sp p$} spaces.
\newblock {\em Ann. Acad. Sci. Fenn. Math.}, 31(1):239--264, 2006.

\bibitem{101ab}
D.~Cruz-Uribe, A.~Fiorenza, and C.J. Neugebauer.
\newblock The maximal function on variable ${L}^p$-spaces.
\newblock {\em Ann. Acad. Scient. Fennicae, Math.}, 28:223--238, 2003.

\bibitem{101ac}
D.~Cruz-Uribe, J.~M. Martell, and C.~P{\'e}rez.
\newblock Extrapolation from {$A\sb \infty$} weights and applications.
\newblock {\em J. Funct. Anal.}, 213(2):412--439, 2004.

\bibitem{106}
L.~Diening.
\newblock Maximal function on generalized {L}ebesgue spaces {$L\sp
  {p(\cdot)}$}.
\newblock {\em Math. Inequal. Appl.}, 7(2):245--253, 2004.

\bibitem{105a}
L.~Diening.
\newblock Riesz potential and {Sobolev} embeddings on generalized {Lebesgue}
  and {Sobolev} spaces ${L}^{p(\cdot)}$ and ${W}^{k, p(\cdot)}$.
\newblock {\em Mathem. Nachrichten}, 268:31--43, 2004.

\bibitem{106z}
L.~Diening.
\newblock Maximal function on {M}usielak-{O}rlicz spaces and generalized
  {L}ebesgue spaces.
\newblock {\em Bull. Sci. Math.}, 129(8):657--700, 2005.

\bibitem{106b}
L.~Diening, P.~H{\"a}st{\"o}, and A.~Nekvinda.
\newblock Open problems in variable exponent {Lebesgue} and {S}obolev spaces.
\newblock In {\em "Function Spaces, Differential Operators and Nonlinear
  Analysis", Proceedings of the Conference held in Milovy, Bohemian-Moravian
  Uplands, May 28 - June 2, 2004}. Math. Inst. Acad. Sci. Czech Republick,
  Praha.

\bibitem{107a}
L.~Diening and M.~Ru$\check{z}$i$\check{c}$ka.
\newblock Calderon-{Z}ygmund operators on generalized {Lebesgue} spaces
  ${L}^{p(x)}$ and problems related to fluid dynamics.
\newblock {\em J. Reine Angew. Math}, 563:197--220, 2003.

\bibitem{145a}
D.E. Edmunds, V.~Kokilashvili and A.~Meskhi.
\newblock {\em Bounded and Compact Integral Operators}, volume 543 of {\em
  Mathematics and its Applications}.
\newblock Kluwer Academic Publishers, Dordrecht, 2002.

\bibitem{160zzzz}
K.~Falconer.
\newblock {\em Techniques in fractal geometry}.
\newblock John Wiley \& Sons Ltd., Chichester, 1997.


\bibitem{160zb}
X. Fan and D. Zhao.
\newblock {A class of {D}e {G}iorgi type and {H}\"older continuity}.
\newblock {\em Nonlinear Anal.}, 36(3, Ser. A):295--318, 1999.




\bibitem{160c}
C.~Fefferman and E.~M. Stein.
\newblock {$H\sp{p}$} spaces of several variables.
\newblock {\em Acta Math.}, 129(3-4):137--193, 1972.

\bibitem{187}
I.~Genebashvili, A.~Gogatishvili, V.~Kokilashvili, and M.~Krbec.
\newblock {\em Weight Theory for Integral Transforms on Spaces of Homogeneous
Type.}
\newblock Pitman {Monographs} and {Surveys}, {Pure} and {Applied} mathematics:
  {Longman} {Scientific} and Technical, 1998.
\newblock 422 pages.

\bibitem{HMS}
E.~Harboure, R.A,~Macias and C.~Segovia.
\newblock Extrapolation {R}esults for {C}lasses of {W}eights.
\newblock {\em Amer. J. Math.}, 110(3): 383-397, 1988.

\bibitem{224ab}
P.~Harjulehto, P.~H{\"a}st{\"o}, and V.~Latvala.
\newblock Sobolev embeddings in metric measure spaces with variable dimension.
\newblock {\em Math. Z.}, 254(3):591--609, 2006.

\bibitem{224b}
P.~Harjulehto, P.~H{\"a}st{\"o}, and M.~Pere.
\newblock Variable {E}xponent {L}ebesgue {S}paces on {M}etric {S}paces: {T}he
  {H}ardy-{L}ittlewood {M}aximal {O}perator.
\newblock {\em Real Anal. Exchange}, 30(1):87--104, 2004.

\bibitem{225a}
J.~Heinonen.
\newblock {\em Lectures on Analysis on Metric Spaces}.
\newblock Universitext. Springer-Verlag, New York, 2001.

\bibitem{236a}
R.~A. Hunt and W.S. Young.
\newblock A weighted norm inequality for {F}ourier series.
\newblock {\em Bull. Amer. J. Math.},
\newblock 80:274--277, 1974.

\bibitem{IKS}
D.M.Israfilov, V.Kokilkashvili and S.Samko
\newblock Approximation in weighted Lebesgue spacesand Smirnov spaces
with variable exponents
\newblock
\newblock {\em Proc. A.Razmadze Math. Inst.}, 143: 25-35, 2007.

\bibitem{270a}
N.K. Karapetiants and N.G. Samko.
\newblock Weighted theorems on fractional integrals in the generalized
  {H}\"older spaces ${H}_0^\omega(\rho)$ via the indices $m_\omega$ and
  ${M}_\omega$.
\newblock {\em Fract. Calc. Appl. Anal.}, 7(4):437--458, 2004.

\bibitem{299c}
A.~Yu. Karlovich and A.K. Lerner.
\newblock Commutators of singular integrals on generalized {$L\sp p$} spaces
  with variable exponent.
\newblock {\em Publ. Mat.}, 49(1):111--125, 2005.

\bibitem{310a}
G.~Khuskivadze, V.~Kokilashvili, and V.~Paatashvili.
\newblock Boundary value problems for analytic and harmonic functions in
  domains with nonsmooth boundaries. {A}pplications to conformal mappings.
\newblock {\em Mem. Differential Equations Math. Phys.}, 14:195, 1998.


\bibitem{Knew}
V.~Kokilashvili.
\newblock On approximation of periodic functions (in {R}ussian).
\newblock {\em  Trudy A.Razmadze Mat. Inst.,  Akad.Nauk Gruzin. SSR},
34:51--81, 1968.



\bibitem{316b}
V.~Kokilashvili.
\newblock On a progress in the theory of integral operators in weighted
  {B}anach function spaces.
\newblock In {\em "Function Spaces, Differential Operators and Nonlinear
  Analysis", Proceedings of the Conference held in Milovy, Bohemian-Moravian
  Uplands, May 28 - June 2, 2004}. Math. Inst. Acad. Sci. Czech Republick,
  Praha.

\bibitem{316zz}
V.~Kokilashvili.
\newblock Maximal inequalities and multipliers in weighted {L}izorkin-{T}riebel
  spaces.
\newblock {\em Dokl. Akad. Nauk SSSR}, 239(1):42--45, 1978.

\bibitem{316zza}
V.~Kokilashvili.
\newblock Maximal functions in weighted spaces.
\newblock {\em Boundary properties of analytic functions, singular integral
  equations and some questions of harmonic analysis, Akad. Nauk Gruzin. SSR
  Trudy Tbiliss. Mat. Inst. Razmadze}, 65:110--121, 1980.

\bibitem{316zzb}
V.~Kokilashvili.
\newblock Weighted {L}izorkin-{T}riebel spaces. {S}ingular integrals,
  multipliers, imbedding theorems.
\newblock {\em Trudy Mat. Inst. Steklov. \ Studies in the theory of
  differentiable functions of several variables and its applications, IX},
  161:125--149, 1983.
\newblock English Transl. in Proc. Steklov Inst. Math. 3(1984), 135-162.

\bibitem{317b}
V.~Kokilashvili, V.~Paatashvili, and Samko S.
\newblock Boundedness in {L}ebesgue spaces with variable exponent of the
  {C}auchy singular operators on {C}arleson curves.
\newblock In Ya. Erusalimsky, I.~Gohberg, S.~Grudsky, V.~Rabinovich, and
  N.~Vasilevski, editors, {\em "Operator Theory: Advances and Applications",
  dedicated to 70th birthday of Prof. I.B.Simonenko}, pages 167--186.
  Birkh\"auser Verlag, Basel, 2006.


\bibitem{317c}
V.~Kokilashvili, N.~Samko, and S.~Samko.
\newblock The maximal operator in variable spaces
  ${L}^{p(\cdot)}({\Omega},\rho)$.
\newblock {\em Georgian Math. J.}, 13(1):109--125, 2006.

\bibitem{317e}
V.~Kokilashvili, N.~Samko, and S.~Samko.
\newblock Singular operators in variable spaces ${L}^{p(\cdot)}({\Omega},\rho)$
  with oscillating weights.
\newblock {\em Math. Nachr., 280(9-10): 1145-1156, 2007.}

\bibitem{JFSA}
V.~Kokilashvili, N.~Samko, and S.~Samko.
\newblock The {M}aximal {O}perator in {W}eighted {V}ariable {S}paces
  ${L}^{p(\cdot)}$.
\newblock {\em J. Function spaces and Appl.}
\newblock 5(3): 299-317, 2007.


\bibitem{321c}
V.~Kokilashvili and S.~Samko.
\newblock Singular {Integrals} in {Weighted} {Lebesgue} {Spaces} with
  {Variable} {Exponent}.
\newblock {\em Georgian Math. J.}, 10(1):145--156, 2003.

\bibitem{321a}
V.~Kokilashvili and S.~Samko.
\newblock Maximal and fractional operators in weighted ${L}^{p(x)}$ spaces.
\newblock {\em Revista Matematica Iberoamericana}, 20(2):495--517, 2004.

\bibitem{321i}
V.~Kokilashvili and S.~Samko.
\newblock {B}oundedness in {L}ebesgue spaces with variable exponent of maximal,
  singular and potential operators.
\newblock {\em Izvestija VUZov. Severo-Kavkazskii region. Estestvennie nauki,
  Special issue "Pseudodifferential equations and some problems of mathematical
  physics", dedicated to 70th birthday of Prof. I.B.Simonenko}, pages 152--158,
  2006.

\bibitem{newmetric}
V.~Kokilashvili and S.~Samko.
\newblock {The maximal operator in weighted variable spaces  on metric
measure spaces},
\newblock {\em Proc. A.Razmadze Math. Inst.,} 144:137-144, 2007.

\bibitem{321j}
V.~Kokilashvili and S.~Samko.
\newblock {Boundedness of maximal operators and potential operators on
{C}arleson curves  in {L}ebesgue spaces with variable exponent},
\newblock {\em Acta Mathematica Sinica}, 24(1), 2008.



\bibitem{332}
O.~Kov$\acute{\textrm{a}}$c$\check{\textrm{i}}$k and
  J.~R$\acute{\textrm{a}}$kosn$\check{\textrm{i}}$k.
\newblock On spaces ${L}^{p(x)}$ and ${W}^{k,p(x)}$.
\newblock {\em Czechoslovak {Math.} {J}.}, 41(116):592--618, 1991.

\bibitem{342}
S.G. Krein, Yu.I. Petunin, and E.M. Semenov.
\newblock {\em Interpolation of Linear Operators}.
\newblock Moscow: Nauka, 1978.
\newblock 499 pages.

\bibitem{342a}
S.G. Krein, Yu.I. Petunin, and E.M. Semenov.
\newblock {\em Interpolation of Linear Operators}, volume~54 of {\em
  Translations of Mathematical Monographs}.
\newblock American Mathematical Society, Providence, R.I., 1982.

\bibitem{349b}
D.~S. Kurtz.
\newblock Littlewood-{P}aley and multiplier theorems on weighted {$L\sp{p}$}\
  spaces.
\newblock {\em Trans. Amer. Math. Soc.}, 259(1):235--254, 1980.

\bibitem{Kurtznew}
D.~S. Kurtz and R.L. Wheeden.
\newblock Results on weighted norm inequalities for multipliers.
\newblock {\em Trans. Amer. Math. Soc.}, 255:343--562, 1979.



\bibitem{368a}
P.I. Lizorkin.
\newblock Multipliers of {F}ourier integrals in the spaces {$L\sb{p,\,\theta
  }$}.
\newblock {\em Trudy Mat. Inst. Steklov}, 89:231--248, 1967.
\newblock English Transl. in Proc. Steklov Inst. Math. 89 (1967), 269-290.

\bibitem{381a}
R.~Mac$\grave{i}$as and C.~Segovia.
\newblock A well behaved quasidistance for spaces of homogeneous type.
\newblock {\em Trab. Mat.Inst.Argentina Mat.}, 32:1--18, 1981.

\bibitem{382a}
L. Maligranda.
\newblock Indices and Interpolation.
\newblock {\em Dissertationes Math. (Rozprawy Mat.)}, 234:49, 1985.

\bibitem{382b}
L. Maligranda.
\newblock {\em Orlicz spaces and Interpolation}.
\newblock Departamento de Matem\'atica, Universidade Estadual de Campinas,
  1989.
\newblock Campinas SP Brazil.

\bibitem{399a}
S.G. Mikhlin.
\newblock On multipliers of {F}ourier integrals (in {R}ussian).
\newblock {\em Dokl. Akad. Nauk SSSR}, 109:701--703, 1956.

\bibitem{400}
S.G. Mikhlin.
\newblock {\em Multi-dimensional {Singular} {Integrals} and {Integral}
  {Equations}. ({Russian})}.
\newblock Moscow: Fizmatgiz, 1962.
\newblock 254 pages.

\bibitem{404a}
N.~Miller.
\newblock Weighted {S}obolev spaces and pseudodifferential operators with
  smooth symbols.
\newblock {\em Trans. Amer. Math. Soc.}, 269(1):91--109, 1982.



\bibitem{408}
B.~Muckenhoupt and R.L~Wheeden
\newblock Weighted norm inequalities for fractional integrals.
\newblock{\em Trans. Amer. Math. Soc.},  192: 261-274, 1974.

\bibitem{414b}
A.~Nekvinda.
\newblock Hardy-{Littlewood} maximal operator on ${L}^{p(x)}(\mathbb{R}^n)$.
\newblock {\em Math. Inequal. and Appl.}, 7(2):255--265, 2004.

\bibitem{479zz}
C.~P{\'e}rez.
\newblock Sharp estimates for commutators of singular integrals via iterations
  of the {H}ardy-{L}ittlewood maximal function.
\newblock {\em J. Fourier Anal. Appl.}, 3(6):743--756, 1997.

\bibitem{503a}
V.S. Rabinovich and S.G Samko.
\newblock Boundedness and {F}redholmness of pseudodifferential operators in
  variable exponent spaces.
\newblock {\em Integr. Eq. Oper. Theory},
 \newblock (to appear).

\bibitem{522b}
J.~L. Rubio~de Francia.
\newblock Factorization  and extrapolation of weights.
\newblock {\em Bull. Amer. J. Math. (N.S.)}, 7(2):393--395, 1982.

\bibitem{525}
M.~Ru$\check{z}$i$\check{c}$ka.
\newblock {\em Electroreological {Fluids}: {Modeling} and {Mathematical}
  {Theory}}.
\newblock Springer, {Lecture} {Notes} in {Math.}, 2000.
\newblock vol. 1748, 176 pages.

\bibitem{539}
N.G. Samko.
\newblock Singular integral operators in weighted spaces with generalized
  {H\"older} condition.
\newblock {\em Proc. {A}. {Razmadze} {Math}. {Inst}}, 120:107--134, 1999.

\bibitem{539e}
N.G. Samko.
\newblock On compactness of {I}ntegral {O}perators with a {G}eneralized {W}eak
  {S}ingularity in {W}eighted {S}paces of {C}ontinuous {F}unctions with a
  {G}iven {C}ontinuity {M}odulus.
\newblock {\em Proc. {A}. {Razmadze} {Math}. {Inst}}, 136:91, 2004.

\bibitem{539d}
N.G. Samko.
\newblock On non-equilibrated almost monotonic functions of the
  {Z}ygmund-{B}ary-{S}techkin class.
\newblock {\em Real Anal. Exch.}, 30(2):727--745, 2004/2005.



\bibitem{539j}
N.~Samko.
\newblock Parameter  depending  Bary-Stechkin classes and local  dimensions of measure metric
spaces.
\newblock {\em Proc. A.Razmadze Math. Inst.},   145 (2007), 122-129



\bibitem{539jnew}
N.~Samko.
\newblock Parameter depending almost monotonic functions and their applications to dimensions in metric
measure spaces.
\newblock {\em J. Funct. Spaces and  Appl.},    (2008),
\newblock to appear



\bibitem{539h}
N.Samko, S. Samko and B.Vakulov,
\newblock Weighted Sobolev
theorem in Lebesgue spaces with variable exponent,
\newblock {\em J. Math. Anal. and Applic.},
  \newblock 335(1): 560--583, 2007.

\bibitem{575a}
S.G. Samko.
\newblock Differentiation and integration of variable order and the spaces
  ${L}^{p(x)}$.
\newblock Proceed. of Intern. Conference "Operator Theory and Complex and
  Hypercomplex Analysis", 12--17 December 1994, Mexico City, Mexico, Contemp.
  Math., Vol. 212, 203-219, 1998.

\bibitem{579}
S.G. Samko.
\newblock Denseness of ${C_0^{\infty}({R}^N)}$ in the generalized {Sobolev}
  spaces $ {W^M,P(X)}{({R}^N)} $.
\newblock In {\em Intern. {Soc}. for {Analysis}, {Applic}. and {Comput}., vol.
  5, "Direct and Inverse {Problems} of {Math}. {Physics}", Ed. by R.Gilbert, J.
  Kajiwara and Yongzhi S. Xu, 333-342}. Kluwer {Acad}. {Publ}., 2000.

\bibitem{580b}
S.G. Samko.
\newblock Hardy inequality in the generalized {L}ebesgue spaces.
\newblock {\em Frac. Calc. and Appl. Anal}, 6(4): 355-362, 2003.


\bibitem{580bc}
S.G. Samko.
\newblock Hardy-{L}ittlewood-{S}tein-{W}eiss inequality in the {L}ebesgue
  spaces with variable exponent.
\newblock {\em Frac. Calc. and Appl. Anal}, 6(4):421--440, 2003.

\bibitem{580bd}
S.G. Samko.
\newblock On a progress in the theory of {L}ebesgue spaces with variable
  exponent: maximal and singular operators.
\newblock {\em Integr. Transf. and Spec. Funct}, 16(5-6):461--482, 2005.

\bibitem{584a}
S.G. Samko, E.~Shargorodsky, and B.~Vakulov.
\newblock Weighted {S}obolev theorem with variable exponent for spatial and
  spherical potential operators, {I}{I}.
\newblock {\em J. Math, Anal. Appl.}, 325(1):745--751, 2007.



\bibitem{730ab}
V.V.Zhikov.
\newblock On {L}avrentiev's phenomenon.
\newblock {\em Russian J. Math. Phys.}, 3(2):249--269, 1995.

\bibitem{730c}
V.V.Zhikov.
\newblock {M}eyer-type estimates for solving the non-linear {S}tokes system.
\newblock {\em Differ. Equat.}, 33(1): 108--115, 1997.



\end{thebibliography}

\def\ocirc#1{\ifmmode\setbox0=\hbox{$#1$}\dimen0=\ht0 \advance\dimen0
  by1pt\rlap{\hbox to\wd0{\hss\raise\dimen0
  \hbox{\hskip.2em$\scriptscriptstyle\circ$}\hss}}#1\else {\accent"17 #1}\fi}

\end{document}